\numberwithin{equation}{section}
\numberwithin{figure}{section}
\theoremstyle{plain}
\newtheorem{thm}{\protect\theoremname}[section]
\theoremstyle{remark}
\newtheorem{rem}[thm]{\protect\remarkname}
\theoremstyle{definition}
\newtheorem{example}[thm]{\protect\examplename}
\theoremstyle{plain}
\newtheorem{lem}[thm]{\protect\lemmaname}
\theoremstyle{plain}
\newtheorem{cor}[thm]{\protect\corollaryname}
\theoremstyle{definition}
\newtheorem{defn}[thm]{\protect\definitionname}
\theoremstyle{plain}
\newtheorem{prop}[thm]{\protect\propositionname}
\renewcommand{\d}{\partial}
\def\R{\mathbb R}
\def\<{\langle}
\def\>{\rangle}
\providecommand{\corollaryname}{Corollary}
\providecommand{\definitionname}{Definition}
\providecommand{\examplename}{Example}
\providecommand{\lemmaname}{Lemma}
\providecommand{\propositionname}{Proposition}
\providecommand{\remarkname}{Remark}
\providecommand{\theoremname}{Theorem}
\begin{document}
\title[Curve shortening flow ]{Curve shortening flow on Riemann surfaces with possible ambient conic
singularities }
\author{Biao Ma}
\address{14 MacLean Hall, University of Iowa, IA, 52242}
\email{biao-ma@uiowa.edu}
\begin{abstract}
In this paper, we study the curve shortening flow (CSF) on Riemann
surfaces. We generalize Huisken's comparison function to Riemann surfaces
and surfaces with conic singularities. We reprove the Gage-Hamilton-Grayson
theorem on surfaces. We also prove that for embedded simple closed
curves, CSF can not touch conic singularities with cone angles smaller
than or equal to $\pi$. 
\end{abstract}

\maketitle

\section{Introduction}

In this paper, we study curve shortening flow (CSF) on surfaces. Let
$(M,g)$ be a compact Riemann surface. We consider a one-parameter
family of simple closed curves $\{\Gamma_{t}\}_{t\in[0,T)}$ on $M$.
Suppose that $\Gamma_{t}$ is parametrized by 
\begin{align*}
X & :S^{1}\times[0,T)\to M.
\end{align*}
We call $\{\Gamma_{t}\}$ a \textbf{curve shortening flow} on $M$
if 
\[
\frac{\partial}{\partial t}X(u,t)=k\boldsymbol{n},
\]
where $k$ is the geodesic curvature and $\boldsymbol{n}$ the normal
vector at $X(u,t)$. We only consider CSF for embedded simple closed
curves.

The classic Gage-Hamilton-Grayson Theorem states that: A smooth embedded
closed curve will stay smooth and embedded under CSF unless it converges
to a single round point. See \cite{gage1986heat,grayson1987heat,grayson1989shortening,gage1990curve}.
Here a round point means by a rescaling procedure, the limiting curve
is a round circle. Huisken gives an alternative proof for planar curves
\cite{huisken1998distance}. His proof utilizes a monotonic comparison
function aimed at the isoperimetric profile on $\mathbb{R}^{2}$.

In this paper, we generalize Huisken's proof to surfaces and give
an alternative proof of Gage-Hamilton-Grayson theorem. A key construction
is the following function which is inspired by Huiksen's comparison
function \cite{huisken1998distance}:
\begin{equation}
R(t)=\sup_{x,y}\frac{L(t)e^{-Kt}}{\pi d(x,y,t)}\sin\left(\frac{\pi l(x,y,t)}{L(t)}\right),\label{eq:R(t) definition}
\end{equation}
where $d(x,y,t)$ is the distance between $X(x,t)$ and $X(y,t)$,
$L(t)$ is the total length of $X(\cdot,t)$, and $l(x,y,t)$ is the
arc-length between $x$ and $y$. We state our first main result.
\begin{thm}
Suppose that $X:S^{1}\times[0,T)\to M$ is a curve shortening flow.
Let $R(t)$ be the comparison function defined in (\ref{eq:R(t) definition}).
Let $d_{M}$ be the injectivity radius and $K_{M}=\sup_{M}|\mathcal{K}|$
be the Gaussian curvature bound. Suppose that the curve shortening
flow exists in the time period $[0,T)$. There exists a constant $K=K(d_{M},K_{M})$
such that $R(t)$ is bounded by a constant $C(R(0),d_{M},K_{M})$
.\label{thm:Suppose-that-first theorem}
\end{thm}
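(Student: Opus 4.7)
The plan is to prove the bound via the maximum principle applied to
\[
f(x,y,t) = \frac{L(t)e^{-Kt}}{\pi d(x,y,t)}\sin\!\left(\frac{\pi l(x,y,t)}{L(t)}\right).
\]
Since $\sin(\pi l/L)/d \to \pi/L$ as $x \to y$ by Taylor expansion, $f$ extends continuously to the diagonal with value $e^{-Kt}\le 1$, so $f$ is continuous on the compact space $S^{1}\times S^{1}\times[0,t_{0}]$ for each $t_{0}<T$, and $R(t)$ is attained at some maximizing pair $(x_{0},y_{0})$. The goal is to choose $K=K(d_{M},K_{M})$ so large that $\partial_{t}R\le 0$ whenever the maximizer satisfies $x_{0}\ne y_{0}$ and $d(x_{0},y_{0},t)<d_{M}$; the remaining cases will be handled by elementary estimates.

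I would first record the evolution equations. Under CSF one has $\partial_{t}L=-\int_{\Gamma_{t}}k^{2}\,ds$, and working at arc-length parameters at time $t_{0}$, $\partial_{t}l(x,y,t)=-\int_{x}^{y}k^{2}\,ds$. The delicate ingredient is the ambient distance: provided $d(x,y,t)<d_{M}$ there is a unique minimizing geodesic $\gamma:[0,d]\to M$ from $X(x,t)$ to $X(y,t)$, and the first variation of arc length yields
\[
\partial_{t}d=\langle\gamma'(d),k(y,t)\boldsymbol{n}_{y}\rangle-\langle\gamma'(0),k(x,t)\boldsymbol{n}_{x}\rangle.
\]

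At an interior maximum $(x_{0},y_{0},t_{0})$ with $x_{0}\ne y_{0}$ and $d<d_{M}$, the first-order conditions $\partial_{x}\log f=\partial_{y}\log f=0$, combined with $\partial_{x}l=-1,\partial_{y}l=1,\partial_{x}d=-\langle\gamma'(0),\tau_{x}\rangle,\partial_{y}d=\langle\gamma'(d),\tau_{y}\rangle$ for the curve tangents $\tau_{x},\tau_{y}$, reduce to the equal-angle identity
\[
\frac{\pi d}{L}\cot\!\left(\frac{\pi l}{L}\right)=\langle\gamma'(0),\tau_{x}\rangle=\langle\gamma'(d),\tau_{y}\rangle,
\]
which is the algebraic input that in Huisken's planar argument forces $\partial_{t}\log f$ to collapse to a non-positive combination of $\int k^{2}$ terms via Cauchy--Schwarz. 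On a general surface the same cancellation persists up to a curvature correction bounded by a constant multiple of $K_{M}$, arising from the second variation of $d$ along Jacobi fields on $\gamma$; the hypothesis $d<d_{M}$ ensures there are no conjugate points, so the Jacobi comparison estimates are valid. Since $e^{-Kt}$ contributes $-K$ to $\partial_{t}\log f$, choosing $K>CK_{M}$ for a suitable geometric constant $C$ gives $\partial_{t}\log f\le 0$ at every such interior maximum.

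The main obstacle is handling the two non-generic configurations of the maximizing pair. The diagonal case $x_{0}=y_{0}$ is harmless, since $f=e^{-Kt}\le 1$ there. The remaining case is $d(x_{0},y_{0},t_{0})\ge d_{M}$, where the minimizing geodesic is no longer unique and the above distance-evolution formula breaks down; here I would use the blunt inequality
\[
f(x_{0},y_{0},t_{0})\le \frac{L(t_{0})e^{-Kt_{0}}}{\pi d_{M}}\le \frac{L(0)}{\pi d_{M}},
\]
relying on $L(t)\le L(0)$ from CSF. Since $L(0)$ is controlled by $R(0)$ together with the geometry of $M$, combining the three cases above yields the uniform bound $R(t)\le C(R(0),d_{M},K_{M})$, as asserted.
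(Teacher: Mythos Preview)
Your overall architecture---reduce to an interior maximizer with $d<d_{M}$, use the first- and second-order conditions there, and treat the far regime $d\ge d_{M}$ by the crude bound $L(0)/(\pi d_{M})$---matches the paper's strategy. The genuine gap is the sentence ``the same cancellation persists up to a curvature correction bounded by a constant multiple of $K_{M}$.'' This is false, and it is exactly where the work lies.

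At an interior maximizer the first-order conditions give $\langle\boldsymbol{\tau}_{1},\boldsymbol{T}_{1}\rangle=\langle\boldsymbol{\tau}_{2},\boldsymbol{T}_{2}\rangle$, but this leaves two geometrically distinct cases: $\langle\boldsymbol{T}_{1},\boldsymbol{\nu}_{1}\rangle=\langle\boldsymbol{T}_{2},\boldsymbol{\nu}_{2}\rangle$ or $\langle\boldsymbol{T}_{1},\boldsymbol{\nu}_{1}\rangle=-\langle\boldsymbol{T}_{2},\boldsymbol{\nu}_{2}\rangle$, and one must apply the second-derivative test with the operator $\partial_{t}-\partial_{xx}-\partial_{yy}\pm 2\partial_{xy}$ accordingly. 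In the first case the Jacobi-field errors are indeed $O(K_{M})$ and your $e^{-Kt}$ factor absorbs them. In the second case, however, the mixed second derivative contributes an additional $-\tfrac{4\pi}{L}\sin(\pi l/L)$, which is \emph{not} bounded by any multiple of $K_{M}$: it blows up as $L\to 0$. No fixed choice of $K$ can dominate this term. The paper resolves this by a further split: if $L$ is bounded below by a constant $\delta_{M}(d_{M},K_{M})$ one simply takes $K>4\pi^{2}/\delta_{M}^{2}$; if $L<\delta_{M}$ one invokes Gauss--Bonnet on the region enclosed by $\Gamma_{t}$ and on the bigon bounded by the arc and the minimizing geodesic, combined with the isoperimetric inequality and Cauchy--Schwarz, to show $\int_{S^{1}}k^{2}\ge 4\pi^{2}/L-O(K_{M}L)$ and $\int_{x}^{y}k^{2}\ge 4\theta^{2}/l-O(K_{M}\theta l)$. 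These lower bounds, together with the angle relation $\cos\theta<\cos(\pi l/L)$ forced by the first-order conditions, are what make the computation close. None of this is captured by ``a curvature correction bounded by $CK_{M}$,'' and your sketch gives no indication that you have seen the opposite-sign case or the small-$L$ regime.

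A minor point: your final sentence asserts that $L(0)$ is controlled by $R(0)$ and the geometry of $M$. It is not; $R(0)$ is scale-invariant in a sense that $L(0)$ is not. The paper's bound in fact depends on $L(0)$ through the choice of the threshold $N$, so you should either carry $L(0)$ as an explicit dependency or explain why it can be suppressed.
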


\begin{rem}
One can not expect $R(t)$ to be monotonic. In fact, we can only perform
variation of distance function inside a small neighborhood. So the
bound of $R(t)$ can be huge. However, the boundedness of $R(t)$
is enough to rule out Type II singularities. See Section 2.  Brendle
\cite{brendle2014two} gives a nice exposition for some generalizations
of Huisken's comparison functions.
\end{rem}

Together with an analysis on Type I singularities in Section 5, we
can prove Gage-Hamilton-Grayson Theorem:
\begin{thm}
\label{thm:G-H-G thm}Suppose that $\Gamma_{t}$ is a curve shortening
flow starting with a smooth embedded curve. Then, $\Gamma_{t}$ remains
smooth and embedded unless it converges to a round point.
\end{thm}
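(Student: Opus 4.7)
\noindent\textbf{Proof plan for Theorem \ref{thm:G-H-G thm}.}

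The strategy is the standard Huisken-type dichotomy, with Theorem \ref{thm:Suppose-that-first theorem} playing the role of Huisken's chord-arc estimate. First I would check that the maximal existence time $T$ is finite. Since $M$ is compact and $\Gamma_0$ is embedded, it bounds a region with finite area; using $\frac{d}{dt}A(t)=-\int k\,ds$ together with Gauss--Bonnet on the enclosed region, one gets $\frac{d}{dt}A(t)\le -2\pi+C_{M}A(t)$, which forces the enclosed area to reach $0$ in finite time, hence $T<\infty$. By standard short-time theory, this means the curvature blows up: $\max_{\Gamma_{t}}|k|\to\infty$ as $t\to T$.

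Next I would split the analysis by the classical Type~I / Type~II dichotomy, defining the flow to be of Type~I if $\limsup_{t\to T}(T-t)\max_{\Gamma_{t}}|k|^{2}<\infty$ and of Type~II otherwise. The main function of Theorem \ref{thm:Suppose-that-first theorem} is to \emph{exclude Type~II singularities}: if one were present, Hamilton's point-picking procedure would produce, after rescaling, a smooth eternal convex limit flow in $\R^{2}$, which by Hamilton's classification is a Grim Reaper translator. Such a translator has arcs on which the chord-to-arc ratio tends to zero, that is, points where $\tfrac{L}{\pi d}\sin(\tfrac{\pi l}{L})$ becomes arbitrarily large, contradicting the uniform bound on $R(t)$ in Theorem \ref{thm:Suppose-that-first theorem}. (The ambient geometry of $M$ drops out because the blowup is performed in a shrinking geodesic ball, on which $g$ becomes euclidean to any order.)

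Then I would handle Type~I using the parabolic rescaling $\lambda_{j}=1/\sqrt{T-t_{j}}$ centered at a singular point $p\in M$. In normal coordinates around $p$ the metric converges in $C^{k}$ to the Euclidean metric after rescaling, and $|k|\le C/\sqrt{T-t}$ gives uniform regularity estimates (via Ecker--Huisken / Gage--Hamilton interior estimates), so the rescaled flows subconverge smoothly to a self-shrinking planar solution. By the Abresch--Langer classification of embedded shrinking planar curves, the only possibility is the round circle. Combining this with the monotonicity quantity and the Type~I analysis of Section~5, the convergence improves from subsequential to full convergence to a round point.

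The main obstacle is the Type~I step on a curved surface: one must verify that during the rescaling the contribution of $\mathcal{K}$ to the evolution of $k$ and to the distance function decays fast enough for the limit equation to be the genuine planar CSF, and one must also check that the basepoints $p_{j}$ of the blowup stay in a single geodesic ball (i.e.\ the singular set is a single point). Both issues are controlled by the chord-arc bound from $R(t)$: it prevents the curve from developing long thin fingers, which is exactly what would otherwise let the singular set spread out or let the ambient curvature enter the blowup limit.
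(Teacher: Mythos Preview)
Your overall strategy---excluding Type II singularities via Theorem \ref{thm:Suppose-that-first theorem} and the grim reaper blowup contradiction, then handling Type I by a localized monotonicity formula together with the Abresch--Langer classification---is exactly the paper's approach; the paper's proof of Theorem \ref{thm:G-H-G thm} is two sentences invoking the Type II corollary at the end of Section \ref{sec:Comparison-function-with} and Theorem \ref{thm:If-Type-I}.

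The one genuine gap is your first paragraph. You should not try to prove $T<\infty$, and in fact it is false in general: a closed geodesic is a stationary solution, and on a flat or hyperbolic surface a homotopically nontrivial embedded curve can exist for all time and converge to a closed geodesic (this is part of Grayson's result on surfaces). Your area argument presupposes that $\Gamma_{0}$ bounds a disk, which need not hold on surfaces of positive genus, and even when it does the inequality $A'\le -2\pi + C_{M}A$ does not by itself force $A\to 0$ once $C_{M}A(0)>2\pi$. The theorem's phrasing ``remains smooth and embedded \emph{unless} it converges to a round point'' already accommodates $T=\infty$; the paper simply argues that \emph{if} a finite-time singularity occurs it must be Type I and hence a round point, with the case $T=\infty$ falling under ``remains smooth and embedded.'' Drop the first paragraph and condition the singularity analysis on $T<\infty$; the rest of your outline is correct and matches the paper.
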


\begin{rem}
Our approach follows from Huisken's proof \cite{huisken1998distance}
on planar curves which is summarized in Section \ref{sec:Backgrounds}.
Andrews and Bryan \cite{andrews2011curvature} also give a proof for
planar CSF by constructing a very delicate comparison function. 
\end{rem}

It is worth noting that our comparison function can be applied to
CSF on conic Riemann surfaces. Conic Riemann surfaces are surfaces
with cone-like singularities. The simplest example is the \textbf{\emph{flat
cone}} 
\begin{equation}
C_{a\pi}=(\mathbb{C},g_{a\pi}),g_{a\pi}=|z|^{a-2}|dz|^{2},\label{eq:flat cone}
\end{equation}
for some $0<a$. It has a conic singularity at the origin with cone
angle $a\pi$ if $0<a\pi<2\pi$. 

Now, imagine curve shortening flow on $C_{\alpha\pi}$ as a shrinking
rubber band. See Figure \ref{Rubberband}. 
\begin{figure}

\centering
\begin{subfigure}{5.5cm}
\includegraphics[width=3.2cm]{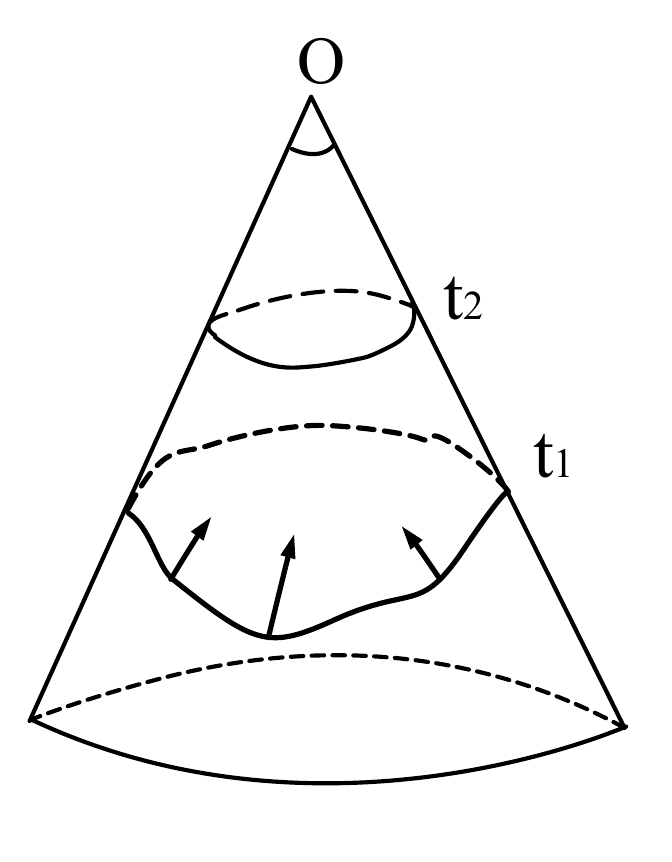}

\end{subfigure}
\begin{subfigure}{5cm}
\includegraphics[width=4.8cm]{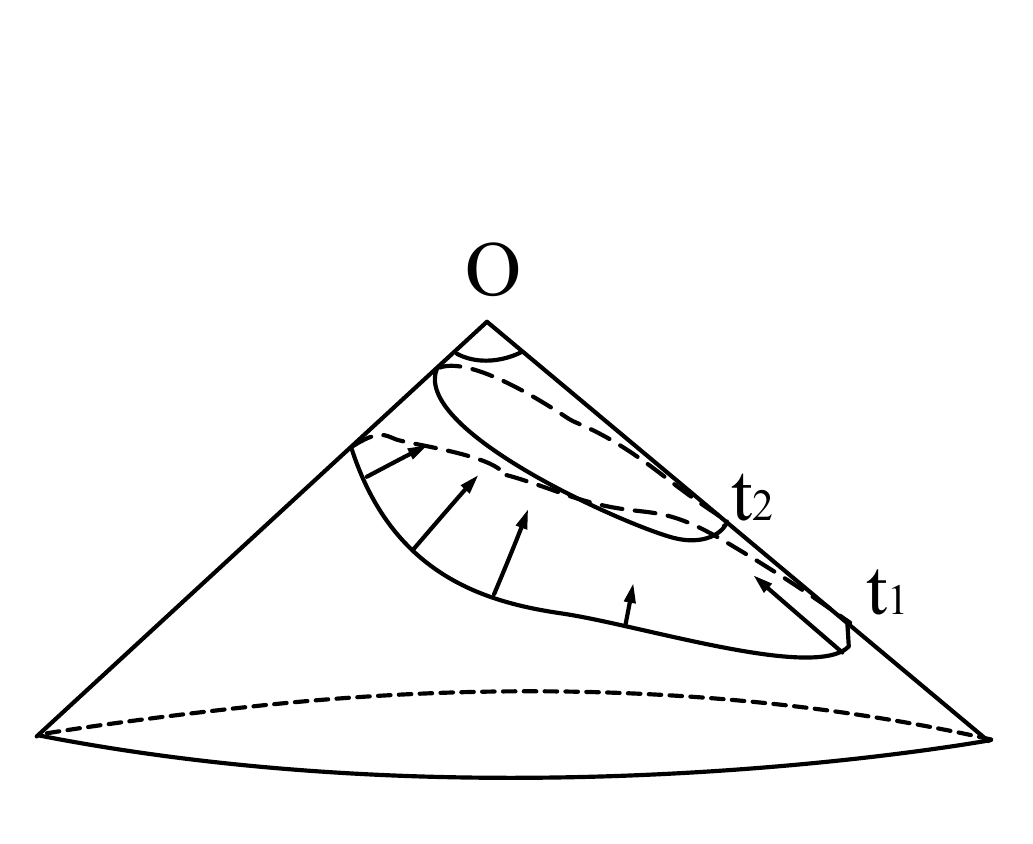}

\end{subfigure}

\caption{CSF on flat cones}
\label{Rubberband}
\end{figure}
There are two cases: If the cone angle is big, the cone tip is not
a big obstacle and the rubber band can move across the cone tip to
another side. In the extreme case $\alpha=2$, $C_{2\pi}$ is just
$\R^{2}$ with a fake cone tip at the origin, and some CSF can certainly
pass the origin. However, if the cone angle is very sharp, the rubber
band can not easily touch the cone tip. It shrinks uniformly until
the whole rubber band goes to the cone tip. Between the two types
of cones, the threshold is $C_{\pi}$. For $C_{\pi}$, we consider
the double cover of $C_{\pi}$ which is exactly $\mathbb{C}=\mathbb{R}^{2}$.
We may thus lift the CSF to $\mathbb{R}^{2}$. See Figure \ref{C_=00005Cpi}.
\begin{figure}
\centering\includegraphics[scale=0.7]{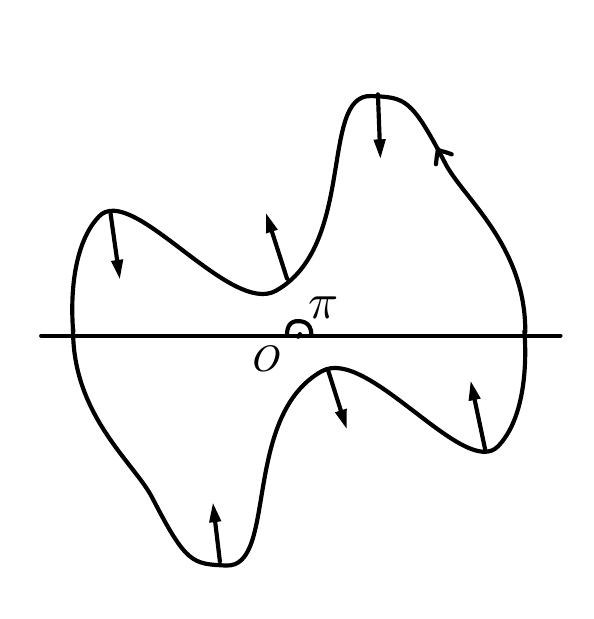}

\caption{CSF on a double cover of $C_{\pi}$}
\label{C_=00005Cpi}
\end{figure}
The maximum principle of parabolic equations shows that the CSF remains
embedded and hence by the symmetry of the lifting, the CSF can not
touch the cone tip in $C_{\pi}$. 

For conic Riemann surfaces, we prove that CSF can not touch the cone
tip if the cone angle is between $0$ and $\pi$. 
\begin{thm}
Let $M$ be a conic Riemann surface. Suppose that $\{\Gamma_{t}\}_{t\in[0,T)}$
is a curve shortening flow starting with a smooth closed embedded
curve $\Gamma_{0}$. Assume $T<\infty$. If $p\in M$ is a conic singularity
with cone angle less than or equal to $\pi$, then either $\lim_{t\to T}\Gamma_{t}=p,$
or the distance function $\mathbf{d}(\Gamma_{t},p)>c>0$ for all $t\in(0,T)$.\label{thm-second them}
\end{thm}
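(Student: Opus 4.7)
The plan is to argue by contradiction and then exploit a branched covering near the cone point. Assume $T<\infty$, that $\Gamma_{t}$ does not converge to $p$ as $t\to T$, and that $\inf_{t<T}\mathbf{d}(\Gamma_{t},p)=0$. Pick a chart $U\cong B(p,r_{0})$ in which the metric is a conformal perturbation of the flat cone $C_{\alpha}$ with $\alpha\le\pi$ the cone angle at $p$. Under the assumption, for $t$ sufficiently close to $T$, part of $\Gamma_{t}$ remains at distance $\ge r_{0}/2$ from $p$, while arcs of $\Gamma_{t}\cap U$ penetrate $B(p,\varepsilon)$ for every $\varepsilon>0$.

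The central step is to lift the flow to a branched double cover. Let $\tilde{U}\to U$ be the double cover ramified at $p$, with deck involution $\sigma$ fixing the preimage $\tilde{p}$ of $p$. The lifted metric has cone angle $2\alpha\le 2\pi$ at $\tilde{p}$: for $\alpha=\pi$ the cover is smooth at $\tilde{p}$, and for $\alpha<\pi$ the cover is a cone of larger angle. Since $\Gamma_{t}$ avoids $p$ for $t<T$, the restriction $\Gamma_{t}\cap U$ lifts to a family $\tilde{\Gamma}_{t}\subset\tilde{U}\smallsetminus\{\tilde{p}\}$ evolving by CSF (the covering is a local isometry), and $\tilde{\Gamma}_{t}$ is $\sigma$-symmetric. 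The embeddedness of $\Gamma_{t}$ together with the condition $\alpha\le\pi$ (so that the ramification angle does not exceed $2\pi$) ensures that $\tilde{\Gamma}_{t}$ is itself embedded.

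The contradiction follows from the embedded, $\sigma$-symmetric nature of $\tilde{\Gamma}_{t}$. If the lift approaches $\tilde{p}$ without collapsing to it, then by $\sigma$-symmetry two antipodal branches of $\tilde{\Gamma}_{t}$ approach $\tilde{p}$ simultaneously. For $\alpha=\pi$, Theorem \ref{thm:G-H-G thm} applies directly on the smooth cover: the embedded symmetric lifted flow must either stay embedded and away from $\tilde{p}$, or shrink to the round point $\tilde{p}$. Embedded convergence of two antipodal branches is incompatible with the first alternative, and the second forces $\Gamma_{t}\to p$, contrary to assumption. For $\alpha<\pi$ one may either iterate the branched covering until the total cone angle on the cover exceeds $\pi$, or use the boundedness of the comparison function $R(t)$ (Theorem \ref{thm:Suppose-that-first theorem}) on a smooth truncation of $\tilde{U}$, combined with shrinking geodesic circles around $\tilde{p}$ serving as inner barriers via the avoidance principle.

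The main obstacle is making the symmetry/avoidance step rigorous at the lifted conic point $\tilde{p}$: one must verify that the maximum principle extends across the singularity on the cover, and must separate the cases where $\Gamma_{t}$ does or does not wind around $p$, since this affects the topology of the lift. A secondary difficulty is controlling $R(t)$ near a conic singularity, where Gauss curvature concentrates; Theorem \ref{thm:Suppose-that-first theorem} does not apply directly there, and a cutoff or localization procedure will be needed to combine the global bulk estimate with the local cone-based barrier argument.
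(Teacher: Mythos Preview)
Your contradiction setup and the idea of passing to a branched double cover are exactly right, and the endgame---using the $\mathbb{Z}_2$ symmetry to find two points on the lift that are far apart intrinsically but close extrinsically, forcing the comparison ratio to blow up---is the same mechanism the paper uses. The gap is that you take a \emph{local} cover $\tilde{U}\to U$. The lift of $\Gamma_{t}\cap U$ then consists of arcs with free boundary on $\partial\tilde{U}$, not closed curves, and neither Theorem~\ref{thm:G-H-G thm} nor the comparison function $R(t)$ of Theorem~\ref{thm:Suppose-that-first theorem} is available for arcs: both are formulated for closed embedded curves (the definition of $R(t)$ uses the total length $L$ and the intrinsic arc-length $l$, and the maximum-principle argument needs compactness in $x,y$). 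Your barrier suggestion via shrinking circles likewise breaks down, since the avoidance principle for non-closed curves requires control at the endpoints, which escape to $\partial\tilde{U}$.

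The paper repairs this by building a \emph{global} branched double cover $\boldsymbol{\pi}:\tilde{M}\to M$ branched over the full set $\mathcal{S}$ of small-angle cone points (adding one extra regular branch point if $\#\mathcal{S}$ is odd). Then the entire closed curve lifts to a closed embedded curve $\tilde{\Gamma}_{t}$ on $\tilde{M}$ of length $\tilde{L}=2L$, and one can run the $R(t)$ argument on $\tilde{M}$. This still requires new work, because $\tilde{M}$ has its own conic points and Theorem~\ref{thm:Suppose-that-first theorem} assumes a smooth surface; the paper handles this by showing (Lemma~\ref{lem:cut-locusnot-in a-conic_neighborhood}) that near a cone the distance function is a minimum of two smooth branches, so the first- and second-order variation inequalities from the smooth proof survive. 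Your iteration idea for $\alpha<\pi$ does not help here: repeated doubling gives cone angle $2^{k}\alpha$, which is generically never $2\pi$, so you cannot desingularize the cover that way; the point is rather to prove the $R(t)$ bound \emph{on a surface that still has cones}.
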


The conic singularities on surfaces are ambient singularities of CSF.
By Theorem \ref{thm-second them}, CSF in some sense can detect ambient
singularities. This is quite different from $\mathbb{R}^{2}$ or smooth
surfaces. In $\mathbb{R}^{2}$, a priori, it is not easy to forecast
the precise ending point of a CSF. We intend to study the mean curvature
flow with ambient singularities in higher dimensions. It is reasonable
to expect mean curvature flow to detect ambient singularities in a
similar fashion. 

We organize the paper as follows: In Section \ref{sec:Backgrounds},
we provide some background and historic notes of CSF. We sketch the
idea of Huisken's proof of Gage-Hamilton-Grayson Theorem in $\mathbb{R}^{2}$.
In Section 3, we collect some formulae for the distance functions
on surfaces. In Section 4, we prove Theorem \ref{thm:Suppose-that-first theorem}.
In Section 5, we study Type I singularities on surfaces. In Section
\ref{sec:conic surfaces}, we provide an introduction on conic Riemann
surfaces and discuss the construction of a double branched cover.
In Section 7, we prove Theorem \ref{thm-second them}.

\subsection*{Acknowledgments }

I would like to thank Jian Song for introducing me this topic. I want
to thank Hao Fang, Jian Song, and James Dibble for discussions and
comments. I thank Marc Troyanov for pointing out a flaw in my original
proof. I want to thank Shanghai Jiaotong University and Mijia Lai
for hospitality during my visit there. 

\section{Backgrounds and historical notes\label{sec:Backgrounds}}

Let $(M,g)$ be a compact Riemann surface. The curve shortening flow
$\{\Gamma_{t}\}$ is parametrized by: 
\begin{align*}
X & :S^{1}\times[0,T)\to M,
\end{align*}
where $X$ evolves under the governing equation: 
\[
\frac{\partial}{\partial t}X(u,t)=k\boldsymbol{n}.
\]
Here $k$ is the geodesic curvature and $\boldsymbol{n}$ is the normal
vector at $X(u,t)$ such that the orientation $\{\frac{\partial}{\partial u}X,\boldsymbol{n}\}$
agrees with the preferred orientation on $M$. The total length $L(t)$
of $\Gamma_{t}$ is decreasing along the CSF \cite{grayson1989shortening}.
In fact, let $s$ be the length parameter such that $L(t)=\int_{\Gamma_{t}}ds$.
Then,
\begin{align}
\frac{d}{dt}ds & =\frac{\partial}{\partial t}\langle\frac{\partial X}{\partial u},\frac{\partial X}{\partial u}\rangle^{\frac{1}{2}}du=\|\frac{\partial X}{\partial u}\|^{-1}\langle\frac{\partial^{2}X}{\partial u\partial t},\frac{\partial X}{\partial u}\rangle du\label{eq:L(t) decay}\\
 & =\langle\frac{\partial^{2}X}{\partial t\partial u},\frac{\partial X}{\partial s}\rangle du=\langle\frac{\partial}{\partial u}(k\boldsymbol{n}),\frac{\partial X}{\partial s}\rangle du\nonumber \\
 & =\left(\frac{\partial}{\partial s}\langle k\boldsymbol{n},\frac{\partial X}{\partial s}\rangle\right)ds-\langle k\boldsymbol{n},\frac{\partial^{2}X}{\partial s^{2}}\rangle ds\nonumber \\
 & =-\langle k\boldsymbol{n},k\boldsymbol{n}\rangle ds=-k^{2}ds.\nonumber 
\end{align}
Hence, 
\[
\frac{d}{dt}L(t)=-\int_{\Gamma_{t}}k^{2}ds.
\]
Therefore, CSF can be view as a gradient flow of the total length
functional.

Next, we recall two exact solutions of CSF.
\begin{example}
(Shrinking round circle). In $\mathbb{R}^{2}$, suppose that $\Gamma_{t}$
are circles of radius $R(t)=\sqrt{R_{0}^{2}-2t}$ with center at $0$
. Then $\{\Gamma_{t}\}$ is a CSF for $t\in(-\infty,R_{0}^{2}/2)$.
Clearly, $\{\Gamma_{t}\}$ is a self-similar shrinking solution.
\end{example}

\begin{example}
(Grim Reaper). Let $\Gamma_{t}={\rm graph\ of}\{-\log\cos x\}+t\subset\mathbb{R}^{2}$
for $x\in(-\pi/2,\pi/2)$ and $t\in(-\infty,\infty)$. It is a translating
self-similar solution 
\end{example}

Historically, Gage and Hamilton \cite{gage1986heat} first obtained
the shrinking results of CSF for convex planar curves. They proved
that under CSF, closed embedded convex curves in $\mathbb{R}^{2}$
shrink to a round point. Grayson in\cite{grayson1987heat} proved
the same results for general curves without requiring convexity. In
fact, he proved that any closed embedded curve will eventually become
convex and shrink to a point. Grayson's proof was quite delicate in
which he combined different types of analysis to deal with various
geometric configurations.

CSF is a special case of general \textbf{mean curvature flow} (MCF),
which by its name means embedded hypersurfaces evolving by their mean
curvature vectors. It has short time existence as long as the geodesic
curvature stays uniformly bounded. Therefore, CSF exists and keeps
embedded in a time interval $[0,T)$. When the geodesic curvature
$k\to\infty$ as $t\to T$, singularities will happen.

A different proof of Gage-Hamilton-Grayson for planar curves is based
on analyzing the behavior at the first time singularities \cite{huisken1990asymptotic,huisken1998distance,altschuler1991singularities,angenent1991formation}.
See \cite{haslhofer2016lectures} for an excellent exposition. Let
$T$ be the time when the first singularity happens. The singularities
of CSF can be classified into 2 types :
\begin{itemize}
\item Type I singularities: $\sup_{\Gamma_{t}}|k|\leq\frac{C}{\sqrt{T-t}}$.
\item Type II singularities: $\limsup_{t\to T}\left((T-t)\sup_{\Gamma_{t}}|k|^{2}\right)=\infty.$
\end{itemize}
To deal with Type I singularities, Huisken defined the backward heat
kernel \cite{huisken1990asymptotic}: 
\[
\rho_{x_{0}}(x,t)=\left(4\pi\left(t_{0}-t\right)\right)^{-1/2}e^{-\frac{\left|x-x_{0}\right|^{2}}{4\left(t_{0}-t\right)}}.\mathbf{}
\]
He showed

\begin{equation}
\frac{d}{dt}\int_{\Gamma_{t}}\rho_{x_{0}}ds=-\int_{\Gamma_{t}}\left|k+\frac{\langle X,\boldsymbol{n}\rangle}{2\left(t_{0}-t\right)}\right|^{2}\rho_{x_{0}}ds<0.\label{eq:Backwards heat kernel}
\end{equation}
After a rescaling, Type I singularities converge to some ancient CSF
and by (\ref{eq:Backwards heat kernel}), they happen to be self-similar
shrinking solutions of CSF, which can only be round circles\cite{abresch1986normalized,angenent1991formation}.
Similar results can be derived for MCF on $\mathbb{R}^{n}$ \cite{huisken1990asymptotic},
except for MCF, more self-similar shrinking solutions exist. Hamilton
\cite{hamilton1993monotonicity} generalized Huisken's formula to
manifolds assuming some additional geometric conditions. We will establish
a localized backward heat kernel formula on surfaces to study Type
I singularities in Section \ref{sec:Type-I}.

A big difference between CSF and higher dimensional MCF is the non-existence
of Type II singularities. For CSF of a simple closed embedded curve
on $\mathbb{R}^{2}$, Type II singularities can never happen while
they can happen in general for MCF. The absence of Type II singularities
of CSF on $\mathbb{R}^{2}$ can be seen using Huisken's comparison
function:
\[
R_{H}(t):=\sup_{x,y}\frac{L(t)}{\pi d(x,y,t)}\sin\left(\frac{\pi l(x,y,t)}{L(t)}\right).
\]
Huisken proved that for embedded curves $R_{H}(t)$ is non-increasing
and has lower bound $1$ which can be achieved only by round circles
\cite{huisken1998distance}. To rule out Type II singularities, we
can perform Type II blow-up by Altschuler \cite{altschuler1991singularities}
and the limit converges to a grim reaper by Hamilton's Harnack estimate
\cite{hamilton1995harnack}. However, grim reaper apparently has an
unbounded ratio between intrinsic and extrinsic distance which implies
that $R_{H}(t)$ is unbounded. This contradicts the monotonicity of
$R_{H}(t)$. 

Once Type II singularities are excluded, it is clear that the first
time singularity can only be Type I and hence a round point. Therefore,
the Gage-Hamilton-Grayson theorem on $\mathbb{R}^{2}$ is proved.
See details in \cite{huisken1998distance}.

\section{Distance Variation Formulae.}

In this section, we present the distance variation formulae along
a geodesic connecting two points on a given curve shortening flow.
Many of the formulae have been derived in \cite{gage1990curve,grayson1989shortening}.
We collect them for completeness. 

Let $X(u,t)$ be a CSF on a Riemann surface as in Section 2. Let $\gamma$
be the shortest geodesic connecting $X(u_{1},t)$ and $X(u_{2},t)$.
Let $d$ be the length of $\gamma$. Parametrize $\gamma(\alpha)$
such that
\[
\dot{\gamma}=\frac{d}{d\alpha}\gamma,|\dot{\gamma}|=d.
\]
By this parametrization, $\gamma(0)=X(u_{1},t)$ and $\gamma(1)=X(u_{2},t)$.
Pick a parallel orthonormal frame $\boldsymbol{\tau},\boldsymbol{\nu}$
along the geodesic such that 
\[
\boldsymbol{\tau}=\frac{\dot{\gamma}}{|\dot{\gamma}|},\boldsymbol{\nu}\perp\boldsymbol{\tau},
\]
and $(\boldsymbol{\tau},\boldsymbol{\nu})$ agrees with the preferred
orientation on the surface. We will use $\dot{\gamma}_{i},k_{i},\boldsymbol{n}_{i},\boldsymbol{\tau}_{i},\boldsymbol{\nu}_{i}$
respectively to represent $\dot{\gamma},k,\boldsymbol{n},\boldsymbol{\tau},\boldsymbol{\nu}$
evaluated at $X(u_{i},t)$, for $i=1,2$. Let $U_{i}(\alpha)$, $i=1,2$
be the Jacobi field along $\gamma$ such that
\[
\begin{cases}
U_{1}(0)=\frac{\partial}{\partial u_{1}}X(u_{1},t),\\
U_{1}(1)=0,
\end{cases}\quad\begin{cases}
U_{2}(0)=0,\\
U_{2}(1)=\frac{\partial}{\partial u_{2}}X(u_{2},t).
\end{cases}
\]
Let $\boldsymbol{T}_{i}$ be the unit tangent vector at $X(u_{i},t)$.
Extend $\boldsymbol{T}_{i}$ to a Jacobi field $\tilde{\boldsymbol{T}_{i}}$
such that $\tilde{\boldsymbol{T}}_{i}=\boldsymbol{T}_{i}$ at $X(u_{i},t)$
and equals $0$ at the other end. Let $W(\alpha)$ be the Jacobi field
along $\gamma$ such that 
\[
W(0)=k_{1}\boldsymbol{n}_{1},W(1)=k_{2}\boldsymbol{n}_{2}.
\]
Let $F(u_{1},u_{2},t)=\int_{0}^{1}|\dot{\gamma}|d\alpha$ be the distance. 

\begin{align*}
F_{t} & =\int_{0}^{1}|\dot{\gamma}|_{t}d\alpha=\int_{0}^{1}\frac{\langle\dot{\gamma},\nabla_{W}\dot{\gamma}\rangle}{|\dot{\gamma}|}d\alpha\\
 & =\frac{1}{d}\int_{0}^{1}\langle\dot{\gamma},\nabla_{\dot{\gamma}}W\rangle d\alpha=\frac{1}{d}\int_{0}^{1}\frac{\partial}{\partial\alpha}\langle\dot{\gamma},W\rangle d\alpha\\
 & =\frac{1}{d}\langle\dot{\gamma}_{2},k_{2}\boldsymbol{n}_{2}\rangle-\frac{1}{d}\langle\dot{\gamma}_{1},k_{1}\boldsymbol{n}_{1}\rangle.
\end{align*}
Similarly,
\[
F_{u_{1}}=\frac{1}{d}\int_{0}^{1}\text{\ensuremath{\frac{\partial}{\partial\alpha}\langle\dot{\gamma},U_{1}\rangle d\alpha=-\frac{1}{d}\langle\dot{\gamma}_{1},U_{1}\rangle,F_{u_{2}}=\frac{1}{d}\langle\dot{\gamma}_{2},U_{2}\rangle.}}
\]
With arc-length parameterization, we have 
\begin{equation}
F_{s_{1}}=-\frac{1}{d}\langle\dot{\gamma}_{1},\boldsymbol{T}_{1}\rangle|_{X(u_{1},t)},F_{s_{2}}=\frac{1}{d}\langle\dot{\gamma}_{2},\boldsymbol{T}_{2}\rangle|_{X(u_{2},t)}.\label{eq:F_s_1}
\end{equation}
Calculate the second order derivatives:

\begin{align*}
(F_{s_{1}})_{u_{1}} & =-\frac{\partial}{\partial u_{1}}(\frac{1}{d}\langle\dot{\gamma}_{1},\boldsymbol{T}_{1}\rangle)\\
 & =-\frac{|U_{1}(0)|\langle\dot{\gamma}_{1},\boldsymbol{T}_{1}\rangle^{2}}{d^{3}}-\frac{1}{d}\langle\nabla_{U_{1}}\dot{\gamma}_{1},\boldsymbol{T}_{1}\rangle-\frac{1}{d}\langle\dot{\gamma}_{1},\nabla_{U_{1}}\boldsymbol{T}_{1}\rangle\\
 & =\frac{|U_{1}(0)|}{d}\left(-\frac{\langle\dot{\gamma}_{1},\boldsymbol{T}_{1}\rangle^{2}}{d^{2}}-\langle\nabla_{\dot{\gamma}_{1}}\tilde{\boldsymbol{T}}_{1},\boldsymbol{T}_{1}\rangle-\langle\dot{\gamma}_{1},\nabla_{\tilde{\boldsymbol{T}}_{1}}\boldsymbol{T}_{1}\rangle\right).
\end{align*}
Let 
\[
\mathcal{K}:M\to\mathbb{R}
\]
be the Gaussian curvature of $M$. Let $J_{1}(\alpha),J_{2}(\alpha)$
be the solutions of 
\begin{align}
J_{1}(1) & =0,J_{1}'(1)=-d,J_{1}''(\alpha)+d^{2}\mathcal{K}(\gamma(\alpha))J_{1}(\alpha)=0,\label{eq:Jacobian equations}\\
J_{2}(0) & =0,J_{2}'(0)=-d,J_{2}''(\alpha)+d^{2}\mathcal{K}(\gamma(\alpha))J_{2}(\alpha)=0.\nonumber 
\end{align}
Then, we can write down the Jacobi field $\tilde{\boldsymbol{T}}_{1}$
and its derivative:
\begin{align}
\tilde{\boldsymbol{T}}_{1}(\alpha) & =\langle\boldsymbol{T}_{1},\boldsymbol{\tau}_{1}\rangle(1-\alpha)\boldsymbol{\tau}+\langle\boldsymbol{T}_{1},\boldsymbol{\nu}_{1}\rangle\frac{J_{1}(\alpha)}{J_{1}(0)}\boldsymbol{\nu},\label{eq:Jacobian vector fields}\\
\nabla_{\dot{\gamma}}\tilde{\boldsymbol{T}}_{1}|_{\alpha=0} & =-\langle\boldsymbol{T}_{1},\boldsymbol{\tau}_{1}\rangle\boldsymbol{\tau}_{1}+\langle\boldsymbol{T}_{1},\boldsymbol{\nu}_{1}\rangle\frac{J'_{1}(0)}{J_{1}(0)}\boldsymbol{\nu}_{1}.\nonumber 
\end{align}
Hence, the second variation formula shows
\begin{align*}
F_{s_{1}s_{1}} & =\frac{1}{d}\left(-\frac{\langle\dot{\gamma}_{1},\boldsymbol{T}_{1}\rangle^{2}}{d^{2}}+\langle\boldsymbol{T}_{1},\boldsymbol{\tau}_{1}\rangle^{2}-\frac{J_{1}'(0)}{J_{1}(0)}\langle\boldsymbol{T}_{1},\boldsymbol{\nu}_{1}\rangle^{2}-\langle\dot{\gamma}_{1},k_{1}\boldsymbol{n}_{1}\rangle\right)\\
 & =-\frac{1}{d}\frac{J_{1}'(0)}{J_{1}(0)}\langle\boldsymbol{T}_{1},\boldsymbol{\nu}_{1}\rangle^{2}-\langle\frac{\dot{\gamma}_{1}}{d},k_{1}\boldsymbol{n}_{1}\rangle.
\end{align*}
Note $\boldsymbol{T}_{1}=\langle\boldsymbol{T}_{1},\boldsymbol{\tau}_{1}\rangle\boldsymbol{\tau}_{1}+\langle\boldsymbol{T}_{1},\boldsymbol{\nu}_{1}\rangle\boldsymbol{\nu}_{1}.$
By parallel transport, we can extend $\boldsymbol{T}_{1}$ such that
\[
\bar{\boldsymbol{T}}_{1}(\alpha)=\langle\boldsymbol{T}_{1},\boldsymbol{\tau}_{1}\rangle\boldsymbol{\tau}(\alpha)+\langle\boldsymbol{T}_{1},\boldsymbol{\nu}_{1}\rangle\boldsymbol{\nu}(\alpha).
\]
Notice that 
\begin{align*}
\langle\dot{\gamma}(\alpha),\bar{\boldsymbol{T}}_{1}(\alpha)\rangle & =\langle\dot{\gamma}_{1},\boldsymbol{T}_{1}\rangle|_{X(u_{1},t)}\\
 & =\langle\dot{\gamma}_{2},\bar{\boldsymbol{T}}_{1}(1)\rangle|_{X(u_{2},t)}.
\end{align*}
Therefore, we can calculate the mixed derivative 
\begin{align}
F_{s_{1}s_{2}} & =-\left(\frac{\partial}{\partial s_{2}}\frac{1}{d}\right)\langle\dot{\gamma},\boldsymbol{T}_{1}\rangle-\frac{1}{d}\nabla_{\tilde{\boldsymbol{T}}_{2}}\langle\dot{\gamma},\bar{\boldsymbol{T}}_{1}\rangle|_{X(u_{2},t)}\label{eq:F_s1s2 1}\\
 & =\frac{1}{d^{3}}\langle\dot{\gamma}_{1},\boldsymbol{T}_{1}\rangle|_{X(u_{1},t)}\langle\dot{\gamma}_{2},\boldsymbol{T}_{2}\rangle|_{X(u_{2},t)}-\frac{1}{d}\langle\boldsymbol{T}_{1},\boldsymbol{\tau}_{1}\rangle\langle\boldsymbol{T}_{2},\boldsymbol{\tau}_{2}\rangle\nonumber \\
 & \ -\frac{J_{2}'(1)}{d\cdot J_{2}(1)}\langle\boldsymbol{T}_{1},\boldsymbol{\nu}_{1}\rangle\langle\boldsymbol{T}_{2},\boldsymbol{\nu}_{2}\rangle-\frac{1}{d}\langle\dot{\gamma},\nabla_{\tilde{\boldsymbol{T}}_{2}}\bar{\boldsymbol{T}}_{1}\rangle|_{X(u_{2},t)}.\nonumber 
\end{align}
We compute 
\begin{align}
\frac{\partial}{\partial\alpha}\langle\dot{\gamma},\nabla_{\tilde{\boldsymbol{T}}_{2}}\bar{\boldsymbol{T}}_{1}\rangle & =\langle\dot{\gamma},\nabla_{\dot{\gamma}}\nabla_{\tilde{\boldsymbol{T}}_{2}}\bar{\boldsymbol{T}}_{1}\rangle\label{eq:=00005Cpartial_=00005Calpha}\\
 & =\langle{\rm Rm}(\dot{\gamma},\tilde{\boldsymbol{T}}_{2})\bar{\boldsymbol{T}}_{1},\dot{\gamma}\rangle+\langle\dot{\gamma},\nabla_{\tilde{\boldsymbol{T}}_{2}}\nabla_{\dot{\gamma}}\bar{\boldsymbol{T}}_{1}\rangle\nonumber \\
 & =-\langle{\rm Rm}(\dot{\gamma},\tilde{\boldsymbol{T}}_{2})\dot{\gamma},\bar{\boldsymbol{T}}_{1}\rangle\nonumber \\
 & =-\langle\nabla_{\dot{\gamma}}\nabla_{\dot{\gamma}}\tilde{\boldsymbol{T}}_{2},\bar{\boldsymbol{T}}_{1}\rangle.\nonumber 
\end{align}
Here, we have used the fact that $\nabla_{\dot{\gamma}}\nabla_{\dot{\gamma}}\tilde{\boldsymbol{T}}_{2}-{\rm Rm}(\dot{\gamma},\tilde{\boldsymbol{T}}_{2})\dot{\gamma}=0$,
where ${\rm Rm}(X,Y)$ is the curvature tensor $[\nabla_{X},\nabla_{Y}]-\nabla_{[X,Y]}$.
Note $\tilde{\boldsymbol{T}}_{2}|_{\alpha=0}=0$. Thus we have
\begin{align}
\frac{1}{d}\langle\dot{\gamma},\nabla_{\tilde{\boldsymbol{T}}_{2}}\bar{\boldsymbol{T}}_{1}\rangle|_{X(u_{2},t)} & =\frac{1}{d}\int_{0}^{1}\frac{\partial}{\partial\alpha}\langle\dot{\gamma},\nabla_{\tilde{\boldsymbol{T}}_{2}}\bar{\boldsymbol{T}}_{1}\rangle d\alpha=-\frac{1}{d}\int_{0}^{1}\langle\nabla_{\dot{\gamma}}\nabla_{\dot{\gamma}}\tilde{\boldsymbol{T}}_{2},\bar{\boldsymbol{T}}_{1}\rangle d\alpha\label{eq:F_s1s2 2 connecting}\\
 & =-\frac{1}{d}\int_{0}^{1}\frac{\partial}{\partial\alpha}\langle\nabla_{\dot{\gamma}}\tilde{\boldsymbol{T}}_{2},\bar{\boldsymbol{T}}_{1}\rangle d\alpha=-\frac{1}{d}\langle\nabla_{\dot{\gamma}}\tilde{\boldsymbol{T}}_{2},\bar{\boldsymbol{T}}_{1}\left.\rangle\right|_{\alpha=0}^{\alpha=1}\nonumber \\
 & =-\frac{1}{d}\langle\langle\boldsymbol{T}_{2},\boldsymbol{\tau}_{2}\rangle\boldsymbol{\tau}+\langle\boldsymbol{T}_{2},\boldsymbol{\nu}_{2}\rangle\frac{J_{2}'(\alpha)}{J_{2}(1)}\boldsymbol{\nu},\bar{\boldsymbol{T}}_{1}\left.\rangle\right|_{\alpha=0}^{\alpha=1}\nonumber \\
 & =-\langle\boldsymbol{T}_{2},\boldsymbol{\nu}_{2}\rangle\langle\boldsymbol{T}_{1},\boldsymbol{\nu}_{1}\rangle\frac{J_{2}'(1)-J_{2}'(0)}{d\cdot J_{2}(1)}.\nonumber 
\end{align}
Therefore, by (\ref{eq:F_s1s2 1}) and (\ref{eq:F_s1s2 2 connecting}),
we have 
\[
F_{s_{1}s_{2}}=-\frac{J_{2}'(0)}{d\cdot J_{2}(1)}\langle\boldsymbol{T}_{1},\boldsymbol{\nu}_{1}\rangle\langle\boldsymbol{T}_{2},\boldsymbol{\nu}_{2}\rangle.
\]
Similarly,
\[
F_{s_{2}s_{1}}=\frac{J_{1}'(1)}{d\cdot J_{1}(0)}\langle\boldsymbol{T}_{1},\boldsymbol{\nu}_{1}\rangle\langle\boldsymbol{T}_{2},\boldsymbol{\nu}_{2}\rangle.
\]
Note 
\begin{equation}
\frac{J_{1}'(1)}{J_{1}(0)}=-\frac{J_{2}'(0)}{J_{2}(1)}.\label{eq:J'_i(1)/J_i(0)}
\end{equation}
We collect the computations above in the following lemma:
\begin{lem}
Suppose that $X(s,t)$ is parametrized by length on the first variable.
Suppose that $p_{1}=X(s_{1},t)$ and $p_{2}=X(s_{2},t)$ are inside
a disk with radius smaller than the injectivity radius. Let $\gamma$
be the shortest geodesic connecting $p_{1},p_{2}$. Let $F(s_{1},s_{2},t)$
be the length of $\gamma$. \label{Lemma formulas}Then
\begin{align}
F_{t} & =\langle\boldsymbol{\tau}_{2},k_{2}\boldsymbol{n}_{2}\rangle-\langle\boldsymbol{\tau}_{1},k_{1}\boldsymbol{n}_{1}\rangle,\label{eq:derivatives}\\
F_{s_{1}} & =-\langle\boldsymbol{\tau}_{1},\boldsymbol{T}_{1}\rangle,F_{s_{2}}=\langle\boldsymbol{\tau}_{2},\boldsymbol{T}_{2}\rangle,\nonumber \\
F_{s_{1}s_{1}} & =-\frac{1}{d}\frac{J_{1}'(0)}{J_{1}(0)}\langle\boldsymbol{T}_{1},\boldsymbol{\nu}_{1}\rangle^{2}-\langle\boldsymbol{\tau}_{1},k_{1}\boldsymbol{n}_{1}\rangle,\nonumber \\
F_{s_{2}s_{2}} & =\frac{1}{d}\frac{J_{2}'(1)}{J_{2}(1)}\langle\boldsymbol{T}_{2},\boldsymbol{\nu}_{2}\rangle^{2}+\langle\boldsymbol{\tau}_{2},k_{2}\boldsymbol{n}_{2}\rangle,\nonumber \\
F_{s_{1}s_{2}} & =F_{s_{2}s_{1}}=\frac{1}{d}\frac{J_{1}'(1)}{J_{1}(0)}\langle\boldsymbol{T}_{1},\boldsymbol{\nu}_{1}\rangle\langle\boldsymbol{T}_{2},\boldsymbol{\nu}_{2}\rangle.\nonumber 
\end{align}
\end{lem}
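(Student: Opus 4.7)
The plan is to assemble the five identities by standard first- and second-variation arguments for the length functional on the two-parameter family of geodesic segments joining $X(s_1,t)$ to $X(s_2,t)$, with the variation fields at the endpoints dictated by the curve shortening flow and by arc-length reparametrizations. Since the two endpoints lie in a disk of radius smaller than the injectivity radius, the minimizing geodesic $\gamma_{s_1,s_2,t}$ is unique and depends smoothly on $(s_1,s_2,t)$, so $F$ is smooth and all derivatives can be computed at a fixed $(s_1,s_2,t)$ by extending the endpoint velocity vectors to Jacobi fields along $\gamma$ (this is legitimate because a minimizing geodesic below the conjugate radius has no conjugate points, so the Jacobi boundary value problem is uniquely solvable).

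First I would prove the first-order identities. For $F_t$, the endpoint velocity at $X(u_i,t)$ is $k_i\boldsymbol{n}_i$; extend this to the Jacobi field $W$ on $\gamma$ with $W(0)=k_1\boldsymbol{n}_1$ and $W(1)=k_2\boldsymbol{n}_2$, and observe that
\begin{equation*}
F_t=\frac{1}{d}\int_0^1 \tfrac{\partial}{\partial\alpha}\langle\dot\gamma,W\rangle\,d\alpha = \langle\boldsymbol{\tau}_2,k_2\boldsymbol{n}_2\rangle-\langle\boldsymbol{\tau}_1,k_1\boldsymbol{n}_1\rangle,
\end{equation*}
where the boundary terms collapse because $\nabla_{\dot\gamma}\dot\gamma=0$. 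The same argument with $U_i$ instead of $W$ yields $F_{s_i}=\mp\langle\boldsymbol{\tau}_i,\boldsymbol{T}_i\rangle$ after switching from $u_i$ to the arc-length variable $s_i$.

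For $F_{s_1 s_1}$ (and symmetrically $F_{s_2 s_2}$) I would differentiate $F_{s_1}=-\langle\boldsymbol{\tau}_1,\boldsymbol{T}_1\rangle$ along the tangential Jacobi extension $\tilde{\boldsymbol{T}}_1$, substitute the Jacobi field decomposition (\ref{eq:Jacobian vector fields}) to evaluate $\nabla_{\dot\gamma}\tilde{\boldsymbol{T}}_1|_0$, and note that $\langle\nabla_{\dot\gamma_1}\boldsymbol{T}_1,\boldsymbol{\tau}_1\rangle=\langle k_1\boldsymbol{n}_1,\boldsymbol{\tau}_1\rangle$. The $\langle\boldsymbol{T}_1,\boldsymbol{\tau}_1\rangle^2$ terms cancel, leaving only the normal component weighted by $J_1'(0)/J_1(0)$ together with the curvature term.

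The hardest piece is $F_{s_1 s_2}$; here the two endpoint variations hit different factors of the product $\tfrac{1}{d}\langle\dot\gamma,\boldsymbol{T}_1\rangle$, so one must handle the term $\nabla_{\tilde{\boldsymbol{T}}_2}\bar{\boldsymbol{T}}_1$. The trick is the curvature identity (\ref{eq:=00005Cpartial_=00005Calpha}): using $\nabla_{\dot\gamma}\bar{\boldsymbol{T}}_1=0$ and the Jacobi equation for $\tilde{\boldsymbol{T}}_2$, the $\alpha$-derivative of $\langle\dot\gamma,\nabla_{\tilde{\boldsymbol{T}}_2}\bar{\boldsymbol{T}}_1\rangle$ becomes a total derivative that can be integrated against the vanishing boundary condition $\tilde{\boldsymbol{T}}_2(0)=0$, producing only the normal-normal coupling with weight $J_2'(0)/J_2(1)$. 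Finally, symmetry of mixed partials together with the Wronskian identity (\ref{eq:J'_i(1)/J_i(0)}) (which itself follows from constancy of $J_1 J_2'-J_1' J_2$ under the Jacobi equation and the boundary data in (\ref{eq:Jacobian equations})) rewrites the result in the form $\tfrac{1}{d}\frac{J_1'(1)}{J_1(0)}\langle\boldsymbol{T}_1,\boldsymbol{\nu}_1\rangle\langle\boldsymbol{T}_2,\boldsymbol{\nu}_2\rangle$, matching the stated expression and completing the list.
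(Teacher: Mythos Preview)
Your proposal is correct and follows essentially the same route as the paper's own derivation: first variation with the endpoint Jacobi fields $W$ and $U_i$ for the first-order terms, the explicit Jacobi decomposition (\ref{eq:Jacobian vector fields}) for the diagonal second derivatives, and the curvature identity (\ref{eq:=00005Cpartial_=00005Calpha}) plus the Wronskian relation (\ref{eq:J'_i(1)/J_i(0)}) for the mixed term. One small slip: in the $F_{s_1 s_1}$ step you write $\langle\nabla_{\dot\gamma_1}\boldsymbol{T}_1,\boldsymbol{\tau}_1\rangle=\langle k_1\boldsymbol{n}_1,\boldsymbol{\tau}_1\rangle$, but the relevant derivative is $\nabla_{\boldsymbol{T}_1}\boldsymbol{T}_1=k_1\boldsymbol{n}_1$ (the curve's own Frenet relation, since $\tilde{\boldsymbol{T}}_1(0)=\boldsymbol{T}_1$), not the derivative along $\dot\gamma$; with that correction your outline matches the paper exactly.
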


In order to obtain estimates using Gaussian curvature bound, we will
need Rauch comparison principle \cite{carmo1992riemannian} on Jacobi
fields: 
\begin{lem}
\label{lem:Jacobian lemma}Suppose that the curvature of the Riemann
surface is bounded, i.e. 
\[
K_{M}=\sup_{M}|\mathcal{K}|<\infty.
\]
Let $J_{1}(\alpha)$ be defined by 
\[
J_{1}(1)=0,J_{1}'(1)=-d,J_{1}''(\alpha)+d^{2}\mathcal{K}(\gamma(\alpha))J_{1}(\alpha)=0.
\]
Then, by Rauch comparison principle \cite{carmo1992riemannian}, 
\[
|J_{1}(\alpha)|\leq\frac{1}{\sqrt{K_{M}}}\sinh(d\sqrt{K_{M}}(1-\alpha))=d(1-\alpha)+O(d^{3}),
\]
 for $0<\alpha<1$. Moreover,
\begin{align}
|J_{1}'(\alpha)-J_{1}'(1)| & =d^{2}|\int_{\alpha}^{1}\mathcal{K}(\gamma(t))J_{1}(t)dt|\leq K_{M}d^{3}(1-\alpha)+O(d^{5}).\label{eq:Jacobi derivative estimate}
\end{align}
So there exists a $\delta_{0}=\delta_{0}(K_{M})$ such that if $d<\delta_{0}$
then
\begin{align}
J_{1}'(\alpha) & \leq-d+2d^{3}K_{M},\label{eq:J' estimate}\\
J_{1}(0) & \geq d-2d^{3}K_{M}>\frac{1}{2}d.\nonumber 
\end{align}
\end{lem}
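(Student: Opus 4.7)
The plan is to reduce everything to the standard Rauch comparison theorem applied to $J_1$ as a Jacobi field that vanishes at one endpoint, and then to bootstrap the derivative and initial-value estimates by directly integrating the Jacobi ODE.

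First I would reverse parameters by setting $\beta=1-\alpha$ and $\tilde{J}(\beta)=J_1(1-\beta)$. Then $\tilde J$ solves the Jacobi equation with $\tilde{J}(0)=0$ and $\tilde{J}'(0)=d$. The factor of $d^2$ multiplying $\mathcal K$ exactly reflects the fact that $\gamma$ has speed $d$ rather than unit speed, so after rescaling the arc-length by $d$ this becomes a Jacobi field along a unit-speed geodesic with initial derivative of unit norm. Since $\mathcal K\ge -K_M$, the Rauch comparison theorem (comparing with the model space of constant curvature $-K_M$) gives the upper bound $|\tilde J(\beta)|\le\frac{1}{\sqrt{K_M}}\sinh(d\sqrt{K_M}\,\beta)$, which upon undoing the change of variables is exactly the stated bound on $|J_1(\alpha)|$. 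Taylor expanding $\sinh$ yields $d(1-\alpha)+O(d^3)$.

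Next I would integrate the Jacobi ODE from $\alpha$ to $1$: since $J_1''=-d^2\mathcal K(\gamma)J_1$,
\begin{equation*}
J_1'(\alpha)-J_1'(1)=d^2\int_\alpha^1\mathcal K(\gamma(t))J_1(t)\,dt.
\end{equation*}
Taking absolute values and applying $|\mathcal K|\le K_M$ together with the sinh bound from the previous step, a direct computation gives $|J_1'(\alpha)-J_1'(1)|\le K_M d^3(1-\alpha)+O(d^5)$, which is the claimed derivative estimate. The constant $2$ in front of $d^3K_M$ in the stated inequality $J_1'(\alpha)\le -d+2d^3 K_M$ then follows by absorbing the $O(d^5)$ error into the leading $d^3K_M$ term, provided $d$ is smaller than some $\delta_0(K_M)$.

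Finally, the lower bound on $J_1(0)$ follows from $J_1(0)=-\int_0^1 J_1'(\alpha)\,d\alpha$ combined with the pointwise bound on $J_1'$ just derived; this yields $J_1(0)\ge d-\tfrac12 K_M d^3-O(d^5)\ge d-2d^3K_M$, and the strict inequality $J_1(0)>\tfrac{1}{2}d$ holds once $d<\delta_0$ for a suitable $\delta_0=\delta_0(K_M)$. I do not foresee a genuine obstacle: the entire argument is a packaging of Rauch comparison together with one integration of the Jacobi equation, and all constants are explicit.
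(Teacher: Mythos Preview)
Your proposal is correct and follows essentially the same route as the paper: the paper does not give a separate proof but embeds the argument in the lemma statement, citing Rauch comparison for the $\sinh$ bound and displaying the identity $J_1'(\alpha)-J_1'(1)=d^2\int_\alpha^1\mathcal K(\gamma(t))J_1(t)\,dt$ obtained by integrating the Jacobi ODE, from which the remaining estimates are read off for $d$ small. Your change of variables $\beta=1-\alpha$ and the final integration $J_1(0)=-\int_0^1 J_1'(\alpha)\,d\alpha$ make explicit exactly the steps the paper leaves to the reader.
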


\section{Comparison function with exponential decay\label{sec:Comparison-function-with}}

In this section, we introduce our new comparison function for CSF
on surfaces. By Gage \cite{gage1990curve}, in a local neighborhood
away from the singularities of CSF, the distance of two distinct points
decreases at most exponentially. Inspired by \cite{gage1990curve,huisken1998distance},
we define a suitable comparison function:
\[
R(t)=\sup_{x,y}\frac{L(t)e^{-Kt}}{\pi d(x,y,t)}\sin\left(\frac{\pi l(x,y,t)}{L(t)}\right).
\]
If Type II singularity happens in finite time, $R(t)\to\infty$ locally,
as was explained in Section 2. Thus, if we want to prove that Type
II singularities do not occur, it is sufficient to prove that $R(t)$
is bounded. 
\begin{thm}
Let $d_{M}$ be the injectivity radius and $K_{M}=\sup_{M}|\mathcal{K}|$
be the Gaussian curvature bound of $M$. Suppose that the curve shortening
flow exists in the time period $[0,T)$. There exists a constant $K=K(d_{M},K_{M})$
such that $R(t)$ is bounded by a constant $C(R(0),d_{M},K_{M})$
. \label{thm:R(t) is boundeddd}
\end{thm}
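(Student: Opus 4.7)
The plan is to argue by the parabolic maximum principle applied to the two-point function
\[
f(x,y,t)=\frac{L(t)e^{-Kt}}{\pi d(x,y,t)}\sin\!\Bigl(\frac{\pi l(x,y,t)}{L(t)}\Bigr),
\]
with $K=K(d_M,K_M)$ to be chosen. First I would note the harmless degeneracy: as $y\to x$ along the curve, $\sin(\pi l/L)/d\to \pi/L$, so the diagonal value of $f$ is $e^{-Kt}\le 1$. Then I would set up a dichotomy based on the distance at the supremum. Fix $\delta_{0}=\min(d_M/2,\delta_{0}(K_M))$, where $\delta_{0}(K_M)$ is the threshold from Lemma \ref{lem:Jacobian lemma}. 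If at some time $t$ the supremum in $R(t)$ is achieved at a pair with $d(x,y,t)\ge\delta_{0}$, then directly
\[
R(t)\ \le\ \frac{L(t)e^{-Kt}}{\pi\delta_{0}}\ \le\ \frac{L(0)}{\pi\delta_{0}},
\]
so only the small-distance regime $d<\delta_{0}$ needs work.

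Next I would run Hamilton's trick for the sup of a smooth family: it suffices to show that whenever $f$ attains an interior maximum $(x_{0},y_{0},t_{0})$ in the small-distance regime with $R(t_{0})$ strictly larger than $\max(R(0),L(0)/(\pi\delta_{0}))$, one has $\partial_{t}f(x_{0},y_{0},t_{0})\le 0$, contradicting that $t_{0}$ is a first time this value is reached. At such a maximum, parametrizing by arc-length $s_{1},s_{2}$ at time $t_{0}$ and using Lemma \ref{Lemma formulas}, the first-order conditions $\partial_{s_{1}}f=\partial_{s_{2}}f=0$ become
\[
\langle\boldsymbol{\tau}_{1},\boldsymbol{T}_{1}\rangle\ =\ \langle\boldsymbol{\tau}_{2},\boldsymbol{T}_{2}\rangle\ =\ \frac{\pi d}{L}\cot\!\Bigl(\frac{\pi l}{L}\Bigr),
\]
so the chord makes equal angles with the curve at the two endpoints, and
\[
\langle\boldsymbol{T}_{i},\boldsymbol{\nu}_{i}\rangle^{2}\ =\ 1-\frac{\pi^{2}d^{2}}{L^{2}}\cot^{2}\!\Bigl(\frac{\pi l}{L}\Bigr).
\]
The second-order condition I would use is $\partial_{s_{1}s_{1}}f+\partial_{s_{2}s_{2}}f-2\partial_{s_{1}s_{2}}f\le 0$, corresponding to the diagonal direction along which $l$ varies maximally.

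Then I would compute $\partial_{t}f$ using $L_{t}=-\int_{\Gamma_{t}}k^{2}ds$, the analogous formula for $l_{t}$, and the formula for $F_{t}$ in Lemma \ref{Lemma formulas}. Assembling everything and eliminating the $k_{i}$-terms by combining with the second-order inequality, the $K$-independent part of the resulting expression reproduces, modulo curvature corrections, Huisken's planar identity from \cite{huisken1998distance} and is therefore non-positive at the maximum (the planar case being recovered by taking $J_{i}(\alpha)=d(1-\alpha)$ and $J_{i}(\alpha)=d\alpha$). The $e^{-Kt}$ factor contributes an additional $-Kf$ term. By Lemma \ref{lem:Jacobian lemma}, the deviations
\[
\frac{1}{d}\frac{J_{1}'(0)}{J_{1}(0)}+\frac{1}{d},\qquad \frac{J_{1}'(1)}{dJ_{1}(0)}+\frac{1}{d},
\]
are bounded by $CK_{M}d$ for $d<\delta_{0}(K_M)$, so every curvature error term in the evolution of $f$ is of order $K_{M}f$ at the critical point (using the estimates on $\langle\boldsymbol{T}_{i},\boldsymbol{\nu}_{i}\rangle^{2}\le 1$ and $d\le \delta_{0}\le d_{M}$ and $\sin(\pi l/L)\le\pi l/L$). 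Choosing $K=C(K_{M},d_{M})$ larger than the implied constant yields $\partial_{t}f\le 0$ at the putative maximum, the desired contradiction.

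The main obstacle I anticipate is bookkeeping in the third paragraph: one has to check that every curvature correction produced by replacing the flat Jacobi fields with the surface Jacobi fields is comparable to $f$ itself (not, say, to $1/d$ which would blow up near the diagonal), and that the leftover "planar" combination is genuinely non-positive on a surface without any sign assumption on $\mathcal{K}$. This is where using the quadratic-in-$d$ Rauch bounds of Lemma \ref{lem:Jacobian lemma} is essential, since a linear-in-$d$ error would not be absorbable by a fixed $K$. Once that is verified, combining the direct bound from the large-distance case and the maximum principle from the small-distance case yields $R(t)\le \max\bigl(R(0),\,L(0)/(\pi\delta_{0})\bigr)=C(R(0),d_{M},K_{M})$, completing the proof.
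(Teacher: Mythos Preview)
Your overall architecture (large-distance dichotomy, Hamilton's trick, use of Lemma \ref{Lemma formulas} and the Rauch bounds of Lemma \ref{lem:Jacobian lemma}) matches the paper, but there is a genuine gap in the second-order step. The first-order conditions only pin down $\langle\boldsymbol\tau_i,\boldsymbol T_i\rangle$, hence $\langle\boldsymbol T_i,\boldsymbol\nu_i\rangle$ only up to sign; there are two configurations, $\langle\boldsymbol T_1,\boldsymbol\nu_1\rangle=\langle\boldsymbol T_2,\boldsymbol\nu_2\rangle$ and $\langle\boldsymbol T_1,\boldsymbol\nu_1\rangle=-\langle\boldsymbol T_2,\boldsymbol\nu_2\rangle$. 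The cross term $F_{s_1s_2}$ carries the product $\langle\boldsymbol T_1,\boldsymbol\nu_1\rangle\langle\boldsymbol T_2,\boldsymbol\nu_2\rangle$, so the Jacobi contributions combine with the diagonal second derivatives only if you test in the direction $(\partial_{s_1}-\partial_{s_2})$ in the first case and $(\partial_{s_1}+\partial_{s_2})$ in the second. You commit to a single test direction and then assert that the residual ``planar'' combination is non-positive; this is correct for the first case but false for the second.

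Concretely, in the opposite-sign case the $(\partial_{s_1}+\partial_{s_2})^2$ test produces an extra term of size $-\tfrac{4\pi}{L}\sin(\pi l/L)$ (coming from the second derivative of the comparison profile, not from ambient curvature), which is \emph{not} $O(K_M f)$ and cannot be absorbed by the $-Kf$ damping once $L(t)$ becomes small. The paper deals with this by a further split: if $L(\bar t)\ge\delta_M(d_M,K_M)$ one simply enlarges $K$ by $4\pi^2/\delta_M^2$; if $L(\bar t)<\delta_M$, one uses Gauss--Bonnet on the region bounded by the chord and the arc, the isoperimetric inequality, and Cauchy--Schwarz to show $\int_{S^1}k^2\,ds\ge 4\pi^2/L-O(L)$ and $\int_{\bar x}^{\bar y}k^2\,ds\ge 4\theta^2/l-O(l)$, which together overpower the bad term and force the angle inequality $\theta\le\pi l/L$, contradicting $\cos\theta<\cos(\pi l/L)$. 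None of this integral-geometric input is present in your plan, and without it the argument does not close in the opposite-sign configuration.
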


\begin{proof}
Note that $L(t)$ is decreasing. Hence, $L(t)<L(0)$. The $\sin$
part is clearly bounded.

We now argue by contradiction. Suppose that 
\begin{equation}
\limsup_{t\to T}R(t)=\infty.\label{eq:R(t) contradiction 1 section 4}
\end{equation}
Pick $d_{0}=\frac{1}{2}\min\{d_{M},\delta_{0}\}$ where $\delta_{0}$
is defined in Lemma \ref{lem:Jacobian lemma}. If $d(x,y,t)>d_{0}$,
then $R(t)\leq\frac{L_{0}}{\pi d_{0}}$. Fix a large $N$ such that
$N>\max\{\frac{L(0)}{\pi d_{0}},R(0),2\}$ . Let 
\begin{equation}
Z_{N}(x,y,t)=Nd(x,y,t)-\frac{L(t)}{\pi}\sin\left(\frac{\pi l(x,y,t)}{L(t)}\right)e^{-Kt}.\label{eq:Z_N definition}
\end{equation}
Then, by (\ref{eq:R(t) contradiction 1 section 4}), there is a $\bar{t}$
such that for any $0<t<\bar{t}$ , $Z_{N}(x,y,t)>0$ for all $x,y$,
$Z_{N}(x,y,\bar{t})\geq0$ for all $x,y$ and $Z_{N}(\bar{x},\bar{y},\bar{t})=0$
for some $\bar{x},\bar{y}$. We can parametrize the curve $X(\cdot,\bar{t})$
by length with positive orientation such that 
\[
0\leq l=y-x\leq\frac{L}{2}.
\]
By the choice of $N$, we see $d(\bar{x},\bar{y},\bar{t})<\frac{d_{M}}{2}$
. Hence, $d(x,y,t)$ is smooth around $(\bar{x},\bar{y},\bar{t})$.
Since $\sin\frac{l\pi}{L}$ is smooth even when $l=\frac{1}{2}L$,
$Z_{N}$ is smooth in a neighborhood of $(\bar{x},\bar{y},\bar{t})$.
Let 
\[
p_{1}=X(\bar{x},\bar{t}),p_{2}=X(\bar{y},\bar{t}).
\]
We can then connect $X(\bar{x},\bar{t})$ and $X(\bar{y},\bar{t})$
by a shortest geodesic 
\[
\gamma:[0,1]\to M,\gamma(0)=X(\bar{x},\bar{t}),\gamma(1)=X(\bar{y},\bar{t}).
\]
 with constant speed $\|\dot{\gamma}\|=d(\bar{x},\bar{y},\bar{t})$.
By Lemma \ref{Lemma formulas}, compute the first variation:
\begin{align}
Z_{N}(x,y,t) & =Nd(x,y,t)-\frac{L(t)}{\pi}\sin\left(\frac{\pi l}{L}\right)e^{-Kt},\label{eq:First order derivatives}\\
\frac{\partial}{\partial x}Z_{N} & =-N\langle\boldsymbol{\tau}_{1},\boldsymbol{T}_{1}\rangle+\cos\left(\frac{\pi l}{L}\right)e^{-Kt},\nonumber \\
\frac{\partial}{\partial y}Z_{N} & =N\langle\boldsymbol{\tau}_{2},\boldsymbol{T}_{2}\rangle-\cos\left(\frac{\pi l}{L}\right)e^{-Kt}.\nonumber 
\end{align}
Second order derivatives are given by
\begin{align}
\frac{\partial^{2}}{\partial x^{2}}Z_{N} & =-\frac{N}{d}\frac{J_{1}'(0)}{J_{1}(0)}\langle\boldsymbol{T}_{1},\boldsymbol{\nu}_{1}\rangle^{2}-N\langle\boldsymbol{\tau}_{1},k_{1}\boldsymbol{n}_{1}\rangle+\frac{\pi e^{-Kt}}{L(t)}\sin\frac{\pi l(x,y,t)}{L(t)},\label{eq:Second order derivative}\\
\frac{\partial^{2}}{\partial y^{2}}Z_{N} & =\frac{N}{d}\frac{J_{2}'(1)}{J_{2}(1)}\langle\boldsymbol{T}_{2},\boldsymbol{\nu}_{2}\rangle^{2}+N\langle\boldsymbol{\tau}_{2},k_{2}\boldsymbol{n}_{2}\rangle+\frac{\pi e^{-Kt}}{L(t)}\sin\frac{\pi l(x,y,t)}{L(t)},\nonumber \\
\frac{\partial^{2}}{\partial x\partial y}Z_{N} & =\frac{N}{d}\frac{J_{1}'(1)}{J_{1}(0)}\langle\boldsymbol{T}_{1},\boldsymbol{\nu}_{1}\rangle\langle\boldsymbol{T}_{2},\boldsymbol{\nu}_{2}\rangle-\pi L(t)^{-1}\sin\frac{\pi l(x,y,t)}{L(t)}e^{-Kt}.\nonumber 
\end{align}
By (\ref{eq:L(t) decay}), we have
\begin{align*}
\frac{d}{dt}l(x,y,t) & =-\int_{x}^{y}k^{2}ds,\\
\frac{d}{dt}L(t) & =-\int_{S^{1}}k^{2}ds.
\end{align*}
Compute $t$ derivative:
\begin{align}
\frac{\partial}{\partial t}Z_{N} & =N\langle\boldsymbol{\tau}_{2},k_{2}\boldsymbol{n}_{2}\rangle-N\langle\boldsymbol{\tau}_{1},k_{1}\boldsymbol{n}_{1}\rangle+Ke^{-Kt}\frac{L(t)}{\pi}\sin\left(\frac{\pi l}{L}\right)\label{eq:Z_N t derivative}\\
 & +e^{-Kt}\left\{ \left(\frac{1}{\pi}\sin\frac{\pi l}{L}-\frac{l}{L}\cos\frac{\pi l}{L}\right)\int_{S^{1}}k^{2}du+\cos\left(\frac{\pi l}{L}\right)\int_{x}^{y}k^{2}ds\right\} .\nonumber 
\end{align}
At $(\bar{x},\bar{y},\bar{t})$, we have that 
\begin{align}
Nd & =\frac{L}{\pi}\sin\left(\frac{\pi l}{L}\right)e^{-K\bar{t}}.\label{eq:indentity for Nd}
\end{align}
Since $Z_{N}$ achieves minimum at $(\bar{x},\bar{y},\bar{t})$, by
Lemma \ref{Lemma formulas},
\begin{align}
\frac{\partial}{\partial x}Z_{N} & =\frac{\partial}{\partial y}Z_{N}=0,\label{eq:Z_N first derivative}\\
0<\langle\boldsymbol{\tau}_{i},\boldsymbol{T}_{i}\rangle & =\frac{1}{N}\cos\left(\frac{\pi l}{L}\right)e^{-K\bar{t}}\leq\frac{1}{N}.\label{eq:=00005Cthetaestimate}
\end{align}
We denote $\theta=\arccos\langle\boldsymbol{\tau}_{1},\boldsymbol{T}_{1}\rangle$.
Equation (\ref{eq:=00005Cthetaestimate}) implies either $\langle\boldsymbol{T}_{1},\boldsymbol{\nu}_{1}\rangle=\langle\boldsymbol{T}_{2},\boldsymbol{\nu}_{2}\rangle$
or $\langle\boldsymbol{T}_{1},\boldsymbol{\nu}_{1}\rangle=-\langle\boldsymbol{T}_{2},\boldsymbol{\nu}_{2}\rangle$.
We consider each case separately.

Case 1. $\langle\boldsymbol{T}_{1},\boldsymbol{\nu}_{1}\rangle=\langle\boldsymbol{T}_{2},\boldsymbol{\nu}_{2}\rangle$.
By Lemma \ref{lem:Jacobian lemma}, we have 
\[
|J_{i}'(0)-J_{i}'(1)|\leq C_{1}d^{3},|J_{1}(0)|\geq\frac{1}{2}d,
\]
where $C_{1}=C_{1}(K_{M})$. Let $K>4C_{1}$ . Define the operator
\[
\mathcal{L_{+}:=}\frac{\partial}{\partial t}-\frac{\partial^{2}}{\partial x^{2}}-\frac{\partial^{2}}{\partial y^{2}}+2\frac{\partial^{2}}{\partial x\partial y}.
\]
 By Lemma \ref{lem:Jacobian lemma}, (\ref{eq:Second order derivative})
, (\ref{eq:Z_N t derivative}) and (\ref{eq:indentity for Nd})we
have 
\begin{align*}
\mathcal{L}_{+}Z_{N} & |_{(\bar{x},\bar{y},\bar{t})}=\frac{N}{d}\left(\frac{J_{1}'(0)-J_{1}'(1)}{J_{1}(0)}-\frac{J_{2}'(1)-J_{2}'(0)}{J_{2}(1)}\right)\langle\boldsymbol{T}_{1},\boldsymbol{\nu}_{1}\rangle^{2}\\
 & +e^{-K\bar{t}}\left\{ \left(\frac{1}{\pi}\tan\frac{\pi l}{L}-\frac{l}{L}\right)\cos\frac{\pi l}{L}\int_{S^{1}}k^{2}ds\right.\\
 & \left.+\cos\left(\frac{\pi l}{L}\right)\int_{\bar{x}}^{\bar{y}}k^{2}ds+K\frac{L}{\pi}\sin\frac{\pi l}{L}\right\} \\
 & \geq-4C_{1}Nd+Ke^{-K\bar{t}}\frac{L}{\pi}\sin\frac{\pi l}{L}\\
 & =(K-4C_{1})e^{-K\bar{t}}\frac{L}{\pi}\sin\frac{\pi l}{L}>0.
\end{align*}
This contradicts with the fact that $Z_{N}(\cdot,\cdot,\bar{t})$
achieves local minimum at $(\bar{x},\bar{y})$.

Case 2. $\langle\boldsymbol{T}_{1},\boldsymbol{\nu}_{1}\rangle=-\langle\boldsymbol{T}_{2},\boldsymbol{\nu}_{2}\rangle.$
We need the following isoperimetric inequality:
\[
|\partial\Omega|^{2}\geq4\pi|\Omega|-K_{M}|\Omega|^{2},
\]
for a simply connected region $\Omega$ with piece-wise $C^{1}$ boundary
on $M$, see \cite{osserman1978isoperimetric}. Then, there exists
a $\delta_{1}=\delta_{1}(d_{M},K_{M})$ such that 
\[
|\partial\Omega|^{2}\geq2\pi|\Omega|,
\]
as long as $\Omega$ is inside a geodesic disk $D_{\delta_{1}}(p)$
such that
\[
|D_{\delta_{1}}(p)|<2\pi K_{M}^{-1}.
\]
Let $\delta_{M}=\min\{\delta_{1},\frac{1}{2(1+K_{M})}\}$. We will
discuss two subcases.

Subcase 2a. We assume $L(\bar{t})\geq\delta_{M}>0.$ Define the operator
\[
\mathcal{L_{-}:=}\frac{\partial}{\partial t}-\frac{\partial^{2}}{\partial x^{2}}-\frac{\partial^{2}}{\partial y^{2}}-2\frac{\partial^{2}}{\partial x\partial y}.
\]
Since $Z_{N}(\cdot,\cdot,\bar{t})$ attains local minimum at $(\bar{x},\bar{y})$,
we have $\mathcal{L}_{-}Z_{N}|_{(\bar{x},\bar{y},\bar{t})}\leq0$.
By Lemma \ref{lem:Jacobian lemma},
\begin{align}
\mathcal{L_{-}}Z_{N}|_{(\bar{x},\bar{y},\bar{t})}= & \frac{N}{d}\left(\frac{J_{1}'(0)-J_{1}'(1)}{J_{1}(0)}-\frac{J_{2}'(1)-J_{2}'(0)}{J_{2}(1)}\right)\langle\boldsymbol{T}_{1},\boldsymbol{\nu}_{1}\rangle^{2}\label{eq:R(t) bounded case 2-1}\\
 & +e^{-K\bar{t}}\left\{ \left(\frac{1}{\pi}\tan\frac{\pi l}{L}-\frac{l}{L}\right)\cos\frac{\pi l}{L}\int_{S^{1}}k^{2}ds\right.\nonumber \\
 & \left.+\cos\left(\frac{\pi l}{L}\right)\int_{x}^{y}k^{2}ds+K\frac{L}{\pi}\sin\frac{\pi l}{L}-\frac{4\pi}{L}\sin\frac{\pi l}{L}\right\} \nonumber \\
 & \geq-4C_{1}Nd+Ke^{-K\bar{t}}\frac{L}{\pi}\sin\frac{\pi l}{L}-e^{-K\bar{t}}\frac{4\pi}{L}\sin\frac{\pi l}{L}\nonumber \\
 & =(K-4C_{1}-\frac{4\pi^{2}}{L^{2}})e^{-K\bar{t}}\frac{L}{\pi}\sin\frac{\pi l}{L}\nonumber \\
 & \ge(K-4C_{1}-\frac{4\pi^{2}}{\delta_{M}^{2}})e^{-K\bar{t}}\frac{L}{\pi}\sin\frac{\pi l}{L}.\nonumber 
\end{align}
If 
\begin{equation}
K>4C_{1}+\frac{4\pi^{2}}{\delta_{M}^{2}},\label{eq:K condition case 2}
\end{equation}
then $\mathcal{L_{-}}Z_{N}|_{(\bar{x},\bar{y},\bar{t})}>0$ and we
have reached a contradiction.

Subcase 2b. We assume $L(\bar{t})<\delta_{M}$. Then, $\Gamma_{\bar{t}}$
is contained in a geodesic disk $D_{\delta_{M}}(p)$ centered at a
point $p$ . By Gauss-Bonnet formula,
\begin{equation}
\int_{S^{1}}k(u)du+\int_{\Omega_{\bar{t}}}\mathcal{K}(z)d\mu(z)=2\pi,\label{eq:Gauss bonnet in Omega_t}
\end{equation}
where $\Omega_{\bar{t}}$ is the region enclosed by the curve $X(\cdot,\bar{t})$.
We have the isoperimetric inequality 
\begin{equation}
|\Omega_{\bar{t}}|\leq L^{2}/2\pi.\label{eq:Area(Omega_t)}
\end{equation}
Thus by (\ref{eq:Gauss bonnet in Omega_t}) , (\ref{eq:Area(Omega_t)})
and the fact that $L<\delta_{M}$, we obtain
\[
\int_{S^{1}}|k|ds\geq2\pi-|\Omega_{\bar{t}}|\sup\mathcal{K}\geq2\pi-L^{2}\frac{K_{M}}{2\pi}>0.
\]
By Cauchy-Schwarz inequality, we have
\begin{align}
\int_{S^{1}}k^{2}ds & \geq\frac{1}{L}(\int_{S^{1}}|k|ds)^{2}\geq\frac{4\pi^{2}}{L}-2K_{M}L.\label{eq:total  S^1 curvature integral estimate}
\end{align}
Now, the shortest geodesic $\gamma$ together with the arc of $X(\cdot,\bar{t})$
from $\bar{x}$ to $\bar{y}$ enclose a region, denoted as $\Omega_{\bar{t}}^{+}$.
Note 
\[
\partial\Omega_{\bar{t}}^{+}=\{X(s,\bar{t}):s\in[\bar{x},\bar{y}]\}\cup\{\gamma(\alpha):\alpha\in[0,1]\}.
\]
Clearly, $|\partial\Omega_{\bar{t}}^{+}|\leq2l$. Two outer angles
at $p_{1}$ and $p_{2}$ on $\partial\Omega_{\bar{t}}^{+}$ are both
equal to $\pi-\theta$. By Gauss-Bonnet formula and isoperimetric
inequality (\ref{eq:Area(Omega_t)}), we have 
\begin{align}
\int_{\bar{x}}^{\bar{y}}kds & =2\pi-\int_{\Omega_{\bar{t}}^{+}}\mathcal{K}(z)d\mu(z)-2(\pi-\theta)\nonumber \\
 & =2\theta-\int_{\Omega_{\bar{t}}^{+}}\mathcal{K}(z)d\mu(z)\nonumber \\
 & \geq2\theta-\frac{2K_{M}}{\pi}l^{2}>0.\label{eq:half S^1 curvature estimate}
\end{align}
Note that the last inequality $2\theta-\frac{2K_{M}}{\pi}l^{2}>0$
holds since $\cos\theta<\frac{1}{N}$ and 
\[
\frac{K_{M}}{\pi}l^{2}<\frac{K_{M}}{(1+K_{M})^{2}4\pi}<\frac{\pi}{3}<\arccos\frac{1}{N}.
\]
Hence, by Cauchy-Schwarz inequality, 
\begin{equation}
\int_{\bar{x}}^{\bar{y}}k^{2}ds\geq\frac{1}{l}(2\theta-\frac{2K_{M}}{\pi}l^{2})^{2}\geq\frac{4\theta^{2}}{l}-\frac{8K_{M}\theta l}{\pi}.\label{eq:=00005Cintk^2 estimate}
\end{equation}
We have
\begin{align*}
\mathcal{L_{-}}Z_{N}|_{(\bar{x},\bar{y},\bar{t})} & =\frac{N}{d}\left(\frac{J_{1}'(0)-J_{1}'(1)}{J_{1}(0)}-\frac{J_{2}'(1)-J_{2}'(0)}{J_{2}(1)}\right)\langle\boldsymbol{T}_{1},\boldsymbol{\nu}_{1}\rangle^{2}\\
 & +e^{-K\bar{t}}\left[\frac{KL}{\pi}\sin\frac{\pi l}{L}-\frac{4\pi}{L}\sin\frac{\pi l}{L}+\cos\left(\frac{\pi l}{L}\right)\int_{x}^{y}k^{2}ds\right]\\
 & +e^{-K\bar{t}}\left(\frac{1}{\pi}\sin\frac{\pi l}{L}-\frac{l}{L}\cos\frac{\pi l}{L}\right)\int_{S^{1}}k^{2}ds.
\end{align*}
In the following, we use $C(K_{M})$ to represent some constants depending
only on $K_{M}$. By Lemma \ref{lem:Jacobian lemma}, (\ref{eq:total  S^1 curvature integral estimate})
and (\ref{eq:half S^1 curvature estimate}), we have 
\begin{align*}
0 & \geq\mathcal{L}_{-}Z_{N}|_{(\bar{x},\bar{y},\bar{t})}\geq-4C_{1}Nd\\
 & \ \ +e^{-K\bar{t}}\left[\frac{KL}{\pi}\sin\frac{\pi l}{L}-\frac{4\pi}{L}\sin\frac{\pi l}{L}+\cos\left(\frac{\pi l}{L}\right)\int_{x}^{y}k^{2}ds\right]\\
 & \ \ +e^{-K\bar{t}}\left(\frac{1}{\pi}\sin\frac{\pi l}{L}-\frac{l}{L}\cos\frac{\pi l}{L}\right)\left(\frac{4\pi^{2}}{L}-C(K_{M})L\right)\\
 & \geq-4C_{1}Nd+e^{-K\bar{t}}\left(\frac{KL}{\pi}-C(K_{M})L\right)\sin\frac{\pi l}{L}\\
 & \ +e^{-K\bar{t}}\left[\cos\left(\frac{\pi l}{L}\right)\int_{x}^{y}k^{2}ds-\frac{l}{L}\cos\frac{\pi l}{L}\left(\frac{4\pi^{2}}{L}-C(K_{M})L\right)\right].
\end{align*}
Then, by (\ref{eq:=00005Cintk^2 estimate}) and (\ref{eq:indentity for Nd}),
\begin{align*}
\mathcal{L}_{-}Z_{N}|_{(\bar{x},\bar{y},\bar{t})} & \geq-4C_{1}Nd+e^{-K\bar{t}}\left(K-C(K_{M})\right)\frac{L}{\pi}\sin\frac{\pi l}{L}\\
 & \ +e^{-K\bar{t}}\cos\frac{\pi l}{L}\left[4\theta^{2}-\frac{8\theta K_{M}}{\pi}l^{2}-\frac{4\pi^{2}l^{2}}{L^{2}}\right]\frac{1}{l}\\
 & =-4C_{1}Nd+e^{-K\bar{t}}\frac{L}{\pi}\sin\frac{\pi l}{L}\left(K-C(K_{M})-C(K_{M})\frac{4\pi\theta l}{L}\cot\frac{l\pi}{L}\right)\\
 & \ +e^{-K\bar{t}}\cos\frac{\pi l}{L}\left[4\theta^{2}-4\pi^{2}\frac{l^{2}}{L^{2}}\right]\frac{1}{l}\\
 & \geq e^{-K\bar{t}}\left[\frac{L}{\pi}\sin\frac{\pi l}{L}\left(K-C(K_{M})-4C_{1}\right)+\frac{1}{l}\cos\frac{\pi l}{L}\left(4\theta^{2}-4\pi^{2}\frac{l^{2}}{L^{2}}\right)\right].
\end{align*}
We can pick $K\geq C(K_{M})+4C_{1}$. In order to keep $\mathcal{L}_{-}Z_{N}|_{(\bar{x},\bar{y},\bar{t})}\leq0$,
it forces $4\theta^{2}-4\pi^{2}\frac{l^{2}}{L^{2}}\leq0$ and hence
$\theta\leq\frac{\pi l}{L}$. However, by (\ref{eq:=00005Cthetaestimate})
\[
\cos\theta=\frac{e^{-K\bar{t}}}{N}\cos\frac{\pi l}{L}<\cos\frac{\pi l}{L},
\]
which is impossible. 

By ruling out Case 1 and Case 2, we see that $Z_{N}(\cdot,\cdot,\bar{t})$
can not reach minimum $0$ at $(\bar{x},\bar{y})$, which implies
that $Z_{N}(t)>0$ for $t\in[0,T)$. Hence, $R(t)<N$. We have proved
the theorem.
\end{proof}
A non-existence result for Type II singularities now can be easily
derived. 
\begin{cor}
Embedded simple closed curves can not develop Type II singularities
in finite time.
\end{cor}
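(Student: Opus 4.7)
The plan is to argue by contradiction, following the blow-up strategy sketched in Section~\ref{sec:Backgrounds} but with the role of Huisken's classical ratio $R_H(t)$ played by our $R(t)$. Suppose that some CSF $\{\Gamma_t\}_{t\in[0,T)}$ starting from a smooth embedded simple closed curve develops a Type~II singularity at time $T<\infty$, so that $\limsup_{t\to T}(T-t)\sup_{\Gamma_t}|k|^2=\infty$. First I would invoke Altschuler's Type~II blow-up procedure: choose a sequence of space-time points $(p_n,t_n)$ on which $|k|^2(T-t_n)\to\infty$, rescale parabolically so the maximum curvature of the rescaled flow is normalized to~$1$ on a large time interval, and pass to a subsequential smooth limit. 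Because $(M,g)$ has bounded geometry, after rescaling the ambient metric converges to the flat Euclidean metric, so the limit is an eternal CSF in $\mathbb{R}^2$ with bounded curvature attaining its maximum at an interior spacetime point.

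Next, I would apply Hamilton's Harnack estimate for planar CSF, exactly as in \cite{hamilton1995harnack,altschuler1991singularities}: an eternal planar CSF attaining its maximum curvature at an interior point must be a translating soliton, hence a grim reaper. The crucial geometric property of the grim reaper is that it is asymptotic to two parallel lines of Euclidean distance~$\pi$; in particular, one can find pairs of points $(x_k,y_k)$ on the grim reaper with arc-length $l(x_k,y_k)\to\infty$ while the extrinsic distance $d(x_k,y_k)$ stays bounded, so the intrinsic-to-extrinsic distance ratio is unbounded on the limit.

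Pulling this back through the rescaling gives a sequence of times $t_n\to T$ and pairs of points $(x_n,y_n)$ on the original $\Gamma_{t_n}$ for which $l(x_n,y_n,t_n)/d(x_n,y_n,t_n)\to\infty$. By choosing $(x_k,y_k)$ appropriately on the grim reaper one can additionally arrange that the rescaled $l/L$ is bounded away from $0$ and $1$, which, after unscaling and using that $L(t_n)$ is bounded above by $L(0)$, ensures that $\sin\bigl(\pi l(x_n,y_n,t_n)/L(t_n)\bigr)$ is bounded below by a positive constant. Consequently
\[
\frac{L(t_n)}{\pi\,d(x_n,y_n,t_n)}\sin\!\left(\frac{\pi l(x_n,y_n,t_n)}{L(t_n)}\right)\longrightarrow\infty.
\]
Multiplying by $e^{-Kt_n}\ge e^{-KT}>0$ preserves divergence, so $R(t_n)\to\infty$, directly contradicting the bound $R(t)\le C(R(0),d_M,K_M)$ from Theorem~\ref{thm:R(t) is boundeddd}.

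The routine pieces (parabolic rescaling, compactness, Harnack, the grim reaper's asymptotic geometry) are standard in the planar CSF literature, so the main thing to verify carefully is the translation from the rescaled limit back to the original flow: I would check that the ratio $l/d$ is scale-invariant and that the bounded-below factor $e^{-Kt}$ on $[0,T)$ is harmless, so that Type~II blow-up really does force $R(t)$ to be unbounded near $T$. That is the only place where one has to be slightly careful, since $R(t)$ contains the global length $L(t)$ and the factor $e^{-Kt}$ that are not themselves scale invariant; both are, however, controlled on a finite time interval, which is precisely what makes the contradiction go through.
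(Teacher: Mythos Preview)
Your proposal is correct and follows essentially the same approach as the paper: perform Altschuler's Type~II blow-up, use Hamilton's Harnack estimate to identify the limit as a grim reaper, and observe that the grim reaper's unbounded intrinsic-to-extrinsic distance ratio contradicts the bound on $R(t)$ from Theorem~\ref{thm:R(t) is boundeddd}. The paper's proof is a two-sentence sketch of exactly this argument; you have simply filled in more of the routine details (the scale-invariance of $l/d$, the harmlessness of $e^{-Kt}$ and $L(t)$ on a finite time interval).
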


\begin{proof}
By performing Type II blow-ups \cite{altschuler1991singularities},
CSF near the singularity should converge to a grim reaper by Hamilton's
Harnack estimate \cite{hamilton1995harnack}. However, for a grim
reaper, the ratio of intrinsic and extrinsic distance blows up which
contradicts Theorem \ref{thm:R(t) is boundeddd}.
\end{proof}

\section{Analysis on Type I singularities \label{sec:Type-I}}

In this section, we focus on Type I singularities on Riemann surfaces.
The main tool we use here is a localized Huisken's backward heat kernel
monotonicity formula. Fix a point $x_{0}\in M$. Let $x\in\Gamma_{t}$
. We may assume $x=X(0,t)$. Let $\gamma:[0,1]\times(-\epsilon,\epsilon)\to M$
be a family of geodesic such that $\gamma(0,u)=x_{0}$, $\gamma(1,u)=X(u,t)$
and 
\[
\|\dot{\gamma}\|_{g}=\mathbf{d}_{g}(x_{0},X(t,u)),
\]
where $\mathbf{d}_{g}$ is the distance function on $(M,g)$. Here,
we use $\dot{\gamma}(\alpha,u)$ to denote $\frac{\partial}{\partial\alpha}\gamma(\alpha,u)$.
Let 
\[
\boldsymbol{T}:=\frac{\partial}{\partial s}=\frac{\partial X}{\partial u}/\|\frac{\partial X}{\partial u}\|_{g}
\]
By the first variation formula,
\[
\frac{\partial}{\partial s}\|\dot{\gamma}\|_{g}=\frac{1}{\|\dot{\gamma}\|_{g}}\langle\boldsymbol{T},\dot{\gamma}\rangle_{g}.
\]
Let $\boldsymbol{\nu}$ be a vector field orthogonal to $\dot{\gamma}$
and parallel along $\gamma$. Let $\mathcal{K}$ be the Gaussian curvature
of $(M,g)$. Let $J(\alpha)$ to be a function such that
\begin{equation}
\begin{cases}
J''(\alpha)+\mathbf{d}_{g}^{2}(x_{0},x)\mathcal{K}(\gamma(\alpha,0))J(\alpha)=0,\\
J(0)=0,J'(0)=\mathbf{d}_{g}(x_{0},x).
\end{cases}\label{eq:J(=00005Calpha) equation}
\end{equation}
Then we can extend $\mathcal{\boldsymbol{T}}$ to be a Jacobi field
by setting
\[
\mathcal{\boldsymbol{T}}(\gamma(\alpha,0))=\left\langle \mathcal{\boldsymbol{T}},\frac{\dot{\gamma}}{\|\dot{\gamma}\|}\right\rangle _{g}\alpha\frac{\dot{\gamma}(\alpha,0)}{\|\dot{\gamma}\|_{g}}+\left\langle \mathcal{\boldsymbol{T}},\boldsymbol{\nu}\right\rangle _{g}\frac{J(\alpha)}{J(1)}\boldsymbol{\nu}(\alpha).
\]
Then $\frac{J(\alpha)\boldsymbol{\nu}(\alpha)}{J(1)}$ will be a Jacobi
field along $\gamma(\alpha)$. Therefore, at $x$,

\begin{equation}
\nabla_{\mathcal{\boldsymbol{T}}}\dot{\gamma}=\nabla_{\dot{\gamma}}\mathcal{\boldsymbol{T}}=\mathcal{\boldsymbol{T}}+\frac{J'(1)-J(1)}{J(1)}\boldsymbol{\nu}.\label{eq:nabla =00005Csigma}
\end{equation}
Denote $\mathcal{J}_{x}:=\frac{J'(1)-J(1)}{J(1)}$. 
\begin{lem}
Let $K_{M}=\sup_{x\in M}|\mathcal{K}(x)|$ and $d=\mathbf{d}_{g}(x,x_{0})$.
Let $J(\alpha)$ be a solution of (\ref{eq:J(=00005Calpha) equation}).
By Lemma \ref{lem:Jacobian lemma}, if $d<\delta_{0}$, then 
\begin{equation}
|\mathcal{J}_{x}|\leq C(K_{M})d^{2}.\label{eq:=00005CmathcalJ_x}
\end{equation}
Let $g_{1}=c^{2}g$ and $\tilde{J}(\alpha)$ be a solution of
\[
\begin{cases}
\tilde{J}''(\alpha)+\mathbf{d}_{g_{1}}^{2}(x_{0},x)\mathcal{K}_{g_{1}}(\gamma(\alpha,0))\tilde{J}(\alpha)=0,\\
\tilde{J}(0)=0,\tilde{J}'(0)=\mathbf{d}_{g_{1}}(x_{0},x).
\end{cases}
\]
then $\tilde{J}(\alpha)=cJ(\alpha)$ and
\[
\tilde{\mathcal{J}_{x}}:=\frac{\tilde{J}'(1)-\tilde{J}(1)}{\tilde{J}(1)}=\mathcal{J}_{x}.
\]
\end{lem}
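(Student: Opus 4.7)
The plan is to prove the two assertions in order: first the bound $|\mathcal{J}_x|\le C(K_M)d^{2}$, which is essentially a repackaging of Lemma \ref{lem:Jacobian lemma} (with the endpoint conditions swapped to $\alpha=0$), and then the scaling identity, which is purely a linear computation for the Jacobi ODE.

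For the estimate, I would parallel the argument in Lemma \ref{lem:Jacobian lemma} but with the initial data placed at $\alpha=0$. Integrating the Jacobi ODE (\ref{eq:J(=00005Calpha) equation}) once, and using $J'(0)=d$, yields
\[
J'(1)-d=-d^{2}\int_{0}^{1}\mathcal{K}(\gamma(t,0))\,J(t)\,dt,
\]
and a second integration in $\alpha$ gives an analogous expression for $J(1)-d$. The Rauch comparison bound supplies $|J(t)|\le d\,t+O(d^{3})$ together with the positivity $J(1)\ge d/2>0$ once $d<\delta_{0}$, which is precisely what makes $\mathcal{J}_x$ well defined. Substituting the linear approximation $J(t)\approx dt$ into each of the two integrals produces $|J'(1)-d|\le C(K_M)\,d^{3}$ and $|J(1)-d|\le C(K_M)\,d^{3}$, and hence $|J'(1)-J(1)|\le C(K_M)d^{3}$. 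Dividing by $J(1)\ge d/2$ delivers the claimed bound (\ref{eq:=00005CmathcalJ_x}).

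For the scaling identity, the key observation I would use is that under the constant conformal change $g_{1}=c^{2}g$ the Levi-Civita connection, and hence the geodesic $\gamma(\alpha,0)$ (as an unparametrized curve), is unchanged, while the rescaling identities $\mathbf{d}_{g_{1}}=c\,\mathbf{d}_{g}$ and $\mathcal{K}_{g_{1}}=c^{-2}\mathcal{K}_{g}$ hold. Therefore the coefficient $\mathbf{d}_{g_{1}}^{2}(x_{0},x)\,\mathcal{K}_{g_{1}}(\gamma(\alpha,0))$ appearing in the ODE for $\tilde{J}$ equals $\mathbf{d}_{g}^{2}(x_{0},x)\,\mathcal{K}_{g}(\gamma(\alpha,0))$, so $\tilde{J}$ and $J$ satisfy the \emph{same} linear ODE; by ODE uniqueness and the initial conditions $\tilde{J}(0)=0$ and $\tilde{J}'(0)=cd=c\,J'(0)$, I conclude $\tilde{J}\equiv c\,J$. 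The identity $\tilde{\mathcal{J}}_{x}=\mathcal{J}_{x}$ is then immediate, since the overall factor $c$ cancels in the ratio $(\tilde{J}'(1)-\tilde{J}(1))/\tilde{J}(1)$.

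I do not anticipate a genuine obstacle. The one care point is to verify that $J$ stays positive on $[0,1]$, which is exactly what the smallness assumption $d<\delta_{0}$ from Lemma \ref{lem:Jacobian lemma} provides; without this, the denominator defining $\mathcal{J}_x$ could fail to be bounded below and the $O(d^{2})$ estimate would collapse. Everything else is routine manipulation of the linear Jacobi equation and its behavior under constant rescaling of the metric.
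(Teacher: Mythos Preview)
Your proposal is correct and matches the paper's intent. The paper does not supply a separate proof for this lemma; the statement itself already cites Lemma~\ref{lem:Jacobian lemma} for the bound on $|\mathcal{J}_x|$, and the scaling assertion is left as the evident computation. Your argument simply makes both steps explicit: the first part is Lemma~\ref{lem:Jacobian lemma} with the endpoint data reflected from $\alpha=1$ to $\alpha=0$, and the second part is the observation that $\mathbf{d}_{g_1}^{2}\mathcal{K}_{g_1}=\mathbf{d}_{g}^{2}\mathcal{K}_{g}$ together with linearity of the Jacobi ODE.
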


Since we are interested in the behavior of CSF near singularities,
we consider $t\in[t_{0},T)$ such that $T-1<t_{0}<T$. Define 
\[
\tau=-\frac{1}{2}\ln(T-t),t\in[t_{0},T).
\]
Then, $\tau\in[\tau_{0},\infty)$ for $\tau_{0}=-\frac{1}{2}\ln(T-t_{0})$.
Let $d_{M}$ be the injectivity radius of $M$ and let 
\[
r_{M}=\min\{\delta_{0},\frac{1}{8}d_{M}\}.
\]
Let $\eta_{0}(r)$ be a smooth cut-off function such that
\[
\eta_{0}(r)=\begin{cases}
1 & 0\leq r<1,\\
0 & r>2,
\end{cases}
\]
We may assume further that $0\geq\eta'_{0}>-2$. Now, We define 
\[
M_{x_{0}}(x):=\frac{1}{\sqrt{2(T-t)}}\exp\left(-\frac{\mathbf{d}_{g}(x_{0},x)^{2}}{4(T-t)}\right)\eta_{0}\left(\frac{\mathbf{d}_{g}(x_{0},x)^{2}}{4(T-t)\tau^{2}r_{M}^{2}}\right).
\]
We call $M_{x_{0}}(x)$ a localized Huisken's backward heat kernel
function. 

\begin{defn}
(Rescaled flow) \label{Def Rescaling}Let the rescaled pointed manifold
$M_{\tau}$ to be the pair $(M,x_{0},g_{\tau})$ with metric 
\[
g_{\tau}=(2(T-t))^{-1}g=\frac{e^{2\tau}}{2}g.
\]
We use $\langle,\rangle_{\tau}$ to denote the inner product under
metric $g_{\tau}$. Let $\Gamma_{\tau}=\Gamma_{t}$ such that $t=\frac{1}{2}e^{2\tau}$
but under the metric $g_{\tau}$. Likewise, we define $\sqrt{2}e^{-\tau}ds_{\tau}=ds$,
$k_{\tau}=k\sqrt{2}e^{-\tau}$, $\mathbf{d}_{\tau}(\cdot,\cdot)=\text{\textbf{d}}_{g_{\tau}}(\cdot,\cdot)$
to be the length form, geodesic curvature and distance function with
respect to the metric $g_{\tau}$.
\end{defn}

Under metric $g_{\tau}$, we have
\[
M_{x_{0}}ds=\exp\left(-\mathbf{d}_{\tau}(x_{0},x)^{2}/2\right)\eta_{0}\left(\frac{\mathbf{d}_{\tau}(x_{0},x)^{2}}{2\tau^{2}r_{M}^{2}}\right)ds_{\tau},
\]
Let 
\[
\rho(x):=\exp\left(-\mathbf{d}_{\tau}(x_{0},x)^{2}/2\right),
\]
and
\[
\eta(x):=\eta_{0}\left(\frac{\mathbf{d}_{\tau}(x_{0},x)^{2}}{2\tau^{2}r_{M}^{2}}\right)=\eta_{0}\left(\phi_{\tau,x_{0}}(x)\right),
\]
where $\phi_{\tau,x_{0}}(x)=\frac{\mathbf{d}_{\tau}(x_{0},x)^{2}}{2\tau^{2}r_{M}}$.
Finally, define 
\[
\mathcal{M}_{\tau}:=\int_{\Gamma_{\tau}}\rho\eta ds_{\tau}.
\]

\begin{thm}
Suppose that $\sup_{\tau,x}|k_{\tau}(x)|<C(k)<\infty$. Then, there
exist constants $C_{1},C_{2}$ depending on $K_{M},r_{M},L(0)$ such
that 
\begin{equation}
\frac{d}{d\tau}\left(p(\tau)\mathcal{M}_{\tau}\right)\leq C_{2}p(\tau)\exp(2\tau-\tau^{2}r_{M}^{2})\tau^{-1},\label{eq:localmonontonicityformula1}
\end{equation}
where $p(\tau)=\exp\left(-C_{1}\int_{0}^{\tau}e^{-2y}y^{2}dy\right)$.
\end{thm}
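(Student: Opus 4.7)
The plan is to adapt Huisken's classical completion-of-squares argument to the conformally rescaled setting, with two extra features to control: the curvature of the ambient surface, which enters through the Jacobi-field defect $\mathcal{J}_x$ from the lemma in Section \ref{sec:Type-I}; and the cutoff $\eta$, whose derivatives are supported on the exponentially thin shell $\{\mathbf{d}_\tau(x_0,\cdot) \in [\sqrt{2}\tau r_M,\, 2\tau r_M]\}$. Before the main calculation I would record the kinematic identities in the rescaled frame: writing $\boldsymbol{r}_\tau = \nabla_\tau \mathbf{d}_\tau(x_0,\cdot)$ for the unit outward radial vector from $x_0$, one has $\partial_\tau X = k_\tau\boldsymbol{n}_\tau$ (the rescaled CSF equation) and, combining the conformal factor $g_\tau = (e^{2\tau}/2)g$ with (\ref{eq:L(t) decay}), $\partial_\tau(ds_\tau) = (1-k_\tau^2)ds_\tau$. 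One also has $\partial_\tau \mathbf{d}_\tau^2|_x = 2\mathbf{d}_\tau^2$ at fixed $x\in M$.

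The main step would be a direct computation of $\frac{d\mathcal{M}_\tau}{d\tau}$ followed by integration by parts via $\int_{\Gamma_\tau} \partial_{s_\tau}^2(\rho\eta)\,ds_\tau = 0$. The key ambient-geometry input is the Hessian identity
\[
\partial_{s_\tau}^2(\mathbf{d}_\tau^2/2) = 1 + \mathcal{J}_x\langle\boldsymbol{r}_\tau,\boldsymbol{n}_\tau\rangle_\tau^2 + k_\tau \mathbf{d}_\tau\langle\boldsymbol{r}_\tau,\boldsymbol{n}_\tau\rangle_\tau,
\]
which follows from the Jacobi-field formula (\ref{eq:nabla =00005Csigma}) together with $\langle\boldsymbol{r}_\tau,\boldsymbol{T}_\tau\rangle_\tau^2 + \langle\boldsymbol{r}_\tau,\boldsymbol{n}_\tau\rangle_\tau^2 = 1$. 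After substitution and regrouping, the bulk terms complete into a Huisken-type square and leave a Jacobi error:
\[
\frac{d\mathcal{M}_\tau}{d\tau} = -\int_{\Gamma_\tau}(k_\tau + \mathbf{d}_\tau\langle\boldsymbol{r}_\tau,\boldsymbol{n}_\tau\rangle_\tau)^2\,\rho\eta\, ds_\tau - \int_{\Gamma_\tau}\mathcal{J}_x\langle\boldsymbol{r}_\tau,\boldsymbol{n}_\tau\rangle_\tau^2\,\rho\eta\, ds_\tau + \mathcal{E}_\eta(\tau),
\]
where $\mathcal{E}_\eta(\tau)$ collects all the terms in which some derivative lands on $\eta$ rather than on $\rho$.

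The first integral on the right is $\leq 0$ and is discarded. For the Jacobi error, the conformal invariance $\tilde{\mathcal{J}}_x = \mathcal{J}_x$ combined with $|\mathcal{J}_x|\leq C(K_M)\mathbf{d}_g(x_0,x)^2 = C(K_M)e^{-2\tau}\mathbf{d}_\tau^2$ (using $\mathbf{d}_\tau = (e^\tau/\sqrt{2})\mathbf{d}_g$) and $\mathbf{d}_\tau\leq 2\tau r_M$ on $\mathrm{supp}\,\eta$ produces the bound $C_1 \tau^2 e^{-2\tau}\mathcal{M}_\tau$. The cutoff error $\mathcal{E}_\eta(\tau)$ is supported on $\mathbf{d}_\tau\in[\sqrt{2}\tau r_M,2\tau r_M]$, where $\rho\leq e^{-\tau^2 r_M^2}$; on that shell one also has $|\nabla_\tau\eta| = O((\tau r_M)^{-1})$ while $|\partial_\tau\eta|$ and $|\eta_0''|$ are $O(1)$. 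Combined with $|k_\tau|\leq C(k)$ and the rescaled length bound $L_\tau = (e^\tau/\sqrt{2})L(t)$, one obtains $|\mathcal{E}_\eta(\tau)|\leq C_2\exp(2\tau - \tau^2 r_M^2)/\tau$. Assembling and multiplying by $p(\tau)=\exp(-C_1\int_0^\tau e^{-2y}y^2\,dy)$ absorbs the first error into the left-hand side and yields (\ref{eq:localmonontonicityformula1}).

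The hard part will be the bookkeeping of the $\eta$-derivative terms, which enter both from $\partial_\tau(\rho\eta)$ along the flow and from the IBP remainder $-\int(2\partial_{s_\tau}\rho\cdot\partial_{s_\tau}\eta + \rho\,\partial_{s_\tau}^2\eta)\,ds_\tau$. In particular, the explicit $\tau^{-2}$ factor in $\phi_{\tau,x_0}$ contributes an extra $-1/\tau$ term to $\partial_\tau\phi_{\tau,x_0} = 2\phi(1-1/\tau)$ that must be tracked through the calculation. Aside from this, the only new geometric input beyond standard integration by parts on curves is the Jacobi estimate $|\mathcal{J}_x|\leq C(K_M)\mathbf{d}_g^2$, and it is precisely the resulting $\tau^2 e^{-2\tau}$ coefficient that motivates the integrating factor $p(\tau)$.
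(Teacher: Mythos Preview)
Your proposal is correct and follows essentially the same approach as the paper: compute $\tfrac{d}{d\tau}\mathcal{M}_\tau$, use an integration-by-parts identity (you phrase it as adding $0=\int_{\Gamma_\tau}\partial_{s_\tau}^2(\rho\eta)\,ds_\tau$, the paper substitutes an equivalent identity for $\int\rho\eta\,ds_\tau$) to complete the Huisken square, then bound the Jacobi defect by $C_1\tau^2 e^{-2\tau}\mathcal{M}_\tau$ via $|\mathcal{J}_x|\le C(K_M)\mathbf{d}_g^2$ and the cutoff error by $C_2 e^{2\tau-\tau^2 r_M^2}/\tau$ using $\rho\le e^{-\tau^2 r_M^2}$ on $\mathrm{supp}\,\eta_0'$ together with the rescaled length bound. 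The only cosmetic difference is that the paper observes one of the cutoff terms (the one with $\eta_0'\langle\dot\gamma,\boldsymbol{n}\rangle_\tau^2$) has a favorable sign and absorbs it into the nonpositive part, whereas you lump all $\eta$-derivative terms into $\mathcal{E}_\eta$; since your cruder bound on $\mathcal{E}_\eta$ is still $\le C_2 e^{2\tau-\tau^2 r_M^2}/\tau$, this makes no difference to the final inequality.
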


\begin{proof}
For $x\in\Gamma_{t}$, we may assume $x=X(0,t)$. Let $\gamma:[0,1]\times(-\epsilon,\epsilon)\to M$
be a family of geodesics such that $\gamma(0,u)=x_{0}$, $\gamma(1,u)=X(u,t)$
and $\|\dot{\gamma}\|_{g}=\mathbf{d}_{g}(x_{0},X(t,u))$. Since $\eta(x)\not=0$
only if $\mathbf{d}_{\tau}(x_{0},x)^{2}\leq4\tau^{2}r_{M}^{2}$, we
may assume 
\[
\mathbf{d}_{g}^{2}(x_{0},x)\leq8e^{-2\tau}\tau^{2}r_{M}^{2}\leq d_{M}^{2}.
\]
Hence, we can use the first variation formula and obtain
\[
\frac{\partial}{\partial\tau}\|\dot{\gamma}\|_{\tau}^{2}=\frac{\partial}{\partial\tau}\left(\frac{1}{2}e^{-2\tau}\|\dot{\gamma}\|_{g}^{2}\right)=\left(\langle\dot{\gamma},k_{\tau}\boldsymbol{n}\rangle_{\tau}+\|\dot{\gamma}\|_{\tau}^{2}\right).
\]
By (\ref{eq:nabla =00005Csigma}),
\begin{align}
\frac{\partial}{\partial s_{\tau}}\rho & =-\rho\langle\dot{\gamma},\nabla_{\mathcal{\boldsymbol{T}}}\dot{\gamma}\rangle_{\tau}=-\rho\langle\dot{\gamma},\mathcal{\boldsymbol{T}}\rangle_{\tau}.\label{=00005Cpartials_tau =00005Crho}
\end{align}
Combining (\ref{eq:nabla =00005Csigma}) and (\ref{=00005Cpartials_tau =00005Crho}),
we have 
\begin{align}
\int_{\Gamma_{\tau}}\rho\eta ds & =\int_{\Gamma_{\tau}}\rho\eta\langle\mathcal{\boldsymbol{\boldsymbol{T}}},\mathcal{\boldsymbol{\boldsymbol{T}}}\rangle_{\tau}ds_{\tau}\label{eq:=00005Cint=00005Crho=00005Ceta}\\
 & =\int_{\Gamma_{\tau}}\rho\eta\langle\mathcal{\boldsymbol{T}},\nabla_{\mathcal{\boldsymbol{T}}}\dot{\gamma}\rangle_{\tau}ds_{\tau}-\int_{\Gamma_{\tau}}\rho\eta\langle\mathcal{\boldsymbol{T}},\mathcal{J}_{x}\boldsymbol{\nu}\rangle_{\tau}ds_{\tau}\nonumber \\
 & =\int_{\Gamma_{\tau}}\frac{\partial}{\partial s_{\tau}}\left(\rho\eta\langle\mathcal{\boldsymbol{T}},\dot{\gamma}\rangle_{\tau}\right)ds_{\tau}-\int_{\Gamma_{\tau}}\rho\eta\langle\nabla_{\mathcal{\boldsymbol{T}}}\mathcal{\mathcal{\boldsymbol{T}}},\dot{\gamma}\rangle_{\tau}ds_{\tau}-\int_{\Gamma_{\tau}}\langle\mathcal{\boldsymbol{T}},\dot{\gamma}\rangle_{\tau}\eta\frac{\partial}{\partial s_{\tau}}\rho ds_{\tau}\nonumber \\
 & \ -\int_{\Gamma_{\tau}}\langle\mathcal{\boldsymbol{T}},\dot{\gamma}\rangle_{\tau}\rho\frac{\partial}{\partial s_{\tau}}\eta ds_{\tau}-\int_{\Gamma_{\tau}}\rho\eta\langle\mathcal{\boldsymbol{T}},\mathcal{J}_{x}\boldsymbol{\nu}\rangle_{\tau}ds_{\tau}\nonumber \\
 & =-\int_{\Gamma_{\tau}}\rho\eta\langle k\boldsymbol{n},\dot{\gamma}\rangle_{\tau}ds_{\tau}+\int_{\Gamma_{\tau}}\rho\eta\langle\mathcal{\boldsymbol{T}},\dot{\gamma}\rangle^{2}ds_{\tau}-\int_{\Gamma_{\tau}}\langle\mathcal{\boldsymbol{T}},\dot{\gamma}\rangle_{\tau}\rho\frac{\partial}{\partial s_{\tau}}\eta ds_{\tau}\nonumber \\
 & \ -\int_{\Gamma_{\tau}}\rho\eta\langle\mathcal{\boldsymbol{T}},\mathcal{J}_{x}\boldsymbol{\nu}\rangle_{\tau}ds_{\tau}.\nonumber 
\end{align}
Since $\sqrt{2}e^{-\tau}ds_{\tau}=ds,$ we have 
\begin{align}
\frac{\partial}{\partial\tau}ds_{\tau} & =(1-k_{\tau}^{2})ds_{\tau}.\label{eq:partial tau ds_=00005Ctau}
\end{align}
Therefore, by (\ref{eq:partial tau ds_=00005Ctau}) and (\ref{eq:=00005Cint=00005Crho=00005Ceta}),
\begin{align}
\frac{d}{d\tau}\mathcal{M}_{\tau} & =\frac{\partial}{\partial\tau}\int_{\Gamma_{\tau}}\rho\eta ds_{\tau}\label{M_=00005Ctau1}\\
 & =\int_{\Gamma_{\tau}}\rho\eta\left(-k_{\tau}^{2}+1-\|\dot{\gamma}\|_{\tau}^{2}-\langle\dot{\gamma},k\boldsymbol{n}\rangle_{\tau}\right)ds_{\tau}+\int_{\Gamma_{\tau}}\rho\frac{\partial}{\partial\tau}\eta ds_{\tau}\nonumber \\
 & =\int_{\Gamma_{\tau}}\rho\eta\left(-k^{2}-2k\langle\boldsymbol{n},\dot{\gamma}\rangle+\langle\mathcal{\boldsymbol{T}},\dot{\gamma}\rangle^{2}-\|\dot{\gamma}\|^{2}\right)ds_{\tau}-\int_{\Gamma_{\tau}}\langle\mathcal{\boldsymbol{T}},\dot{\gamma}\rangle\rho\frac{\partial}{\partial s}\eta ds_{\tau}\nonumber \\
 & \ -\int_{\Gamma_{\tau}}\rho\eta\langle\mathcal{\boldsymbol{T}},\mathcal{J}_{x}\boldsymbol{\nu}\rangle ds_{\tau}+\int_{\Gamma_{\tau}}\rho\frac{\partial}{\partial\tau}\eta ds_{\tau}\nonumber \\
 & =-\int_{\Gamma_{\tau}}\rho\eta|k+\langle\dot{\gamma},\boldsymbol{n}\rangle_{\tau}|^{2}ds_{\tau}-\int_{\Gamma_{\tau}}\rho\eta\langle\mathcal{\boldsymbol{T}},\mathcal{J}_{x}\boldsymbol{\nu}\rangle ds_{\tau}+\int_{\Gamma_{\tau}}A_{\eta}ds_{\tau},\nonumber 
\end{align}
where $A_{\eta}=\rho\frac{\partial}{\partial\tau}\eta-\langle\mathcal{\boldsymbol{T}},\dot{\gamma}\rangle_{\tau}\rho\frac{\partial}{\partial s}\eta$.
Note
\[
\eta(x)=\eta_{0}\left(\frac{\|\dot{\gamma}\|_{g_{\tau}}^{2}}{2\tau^{2}r_{M}^{2}}\right)=\eta_{0}\left(\frac{e^{2\tau}\|\dot{\gamma}\|_{g}^{2}}{2\tau^{2}r_{M}^{2}}\right).
\]
We compute the derivatives of $\eta$:
\begin{align*}
\frac{\partial}{\partial\tau}\eta(x) & =\eta'_{0}\left(\phi_{\tau,x_{0}}(x)\right)\frac{\tau^{-2}}{r_{M}^{2}}\left(\langle\dot{\gamma},k_{\tau}\boldsymbol{n}\rangle_{g_{\tau}}+(1-\tau^{-1})\|\dot{\gamma}\|_{g_{\tau}}^{2}\right),\\
\frac{\partial}{\partial s}\eta(x) & =\eta'_{0}\left(\phi_{\tau,x_{0}}(x)\right)\frac{\langle\dot{\gamma},\nabla_{\mathcal{\boldsymbol{T}}}\dot{\gamma}\rangle_{g_{\tau}}}{\tau^{2}r_{M}^{2}}\\
 & =\eta_{0}'\left(\phi_{\tau,x_{0}}(x)\right)\frac{\tau^{-2}}{r_{M}^{2}}\langle\dot{\gamma},\mathcal{\boldsymbol{T}}+\mathcal{J}_{x}\boldsymbol{\nu}\rangle_{\tau}\langle\mathcal{\boldsymbol{T}},\dot{\gamma}\rangle_{\tau}.
\end{align*}
Thus,
\begin{align*}
A_{\eta} & =\rho\eta_{0}'\left(\phi_{\tau,x_{0}}(x)\right)\frac{\tau^{-2}}{r_{M}^{2}}\left[\langle\dot{\gamma},k_{\tau}\boldsymbol{n}\rangle_{\tau}+(1-\tau^{-1})\|\dot{\gamma}\|_{\tau}^{2}-\langle\dot{\gamma},\mathcal{\boldsymbol{T}}+\mathcal{J}_{x}\boldsymbol{\nu}\rangle_{\tau}\langle\mathcal{\boldsymbol{T}},\dot{\gamma}\rangle_{\tau}\right]\\
 & =\rho\eta_{0}'\left(\phi_{\tau,x_{0}}(x)\right)\frac{\tau^{-2}}{r_{M}^{2}}\left[\langle\dot{\gamma},k_{\tau}\boldsymbol{n}\rangle_{\tau}+\langle\dot{\gamma},\boldsymbol{\nu}\rangle^{2}-\tau^{-1}\|\dot{\gamma}\|_{\tau}^{2}-\langle\mathcal{J}_{x}\boldsymbol{\nu},\dot{\gamma}\rangle_{\tau}\langle\mathcal{\boldsymbol{T}},\dot{\gamma}\rangle_{\tau}\right].
\end{align*}
Now, (\ref{M_=00005Ctau1}) becomes
\begin{align}
\frac{d}{d\tau}\mathcal{M}_{\tau} & =-\int_{\Gamma_{\tau}}\rho\eta|k+\langle\dot{\gamma},\boldsymbol{n}\rangle_{\tau}|^{2}ds_{\tau}-\int_{\Gamma_{\tau}}\rho\eta\langle\mathcal{\boldsymbol{T}},\mathcal{J}_{x}\boldsymbol{\nu}\rangle ds_{\tau}+\int_{\Gamma_{\tau}}A_{\eta}ds_{\tau}.\label{eq:M_=00005Ctau2}\\
 & =-\int_{\Gamma_{\tau}}\rho\eta|k+\langle\dot{\gamma},\boldsymbol{n}\rangle_{\tau}|^{2}ds_{\tau}+\int_{\Gamma_{\tau}}\rho\eta_{0}'\left(\phi_{\tau,x_{0}}(x)\right)\frac{\tau^{-2}}{r_{M}^{2}}\langle\dot{\gamma},\boldsymbol{\nu}\rangle^{2}ds_{\tau}\nonumber \\
 & -\int_{\Gamma_{\tau}}\rho\eta\langle\mathcal{\boldsymbol{T}},\mathcal{J}_{x}\boldsymbol{\nu}\rangle_{\tau}ds_{\tau}+\int_{\Gamma_{\tau}}\rho\eta_{0}'\left(\phi_{\tau,x_{0}}(x)\right)\frac{\tau^{-2}}{r_{M}^{2}}\left(\langle\dot{\gamma},k_{\tau}\boldsymbol{n}\rangle_{\tau}-\langle\mathcal{J}_{x}\boldsymbol{\nu},\dot{\gamma}\rangle_{\tau}\langle\mathcal{\boldsymbol{T}},\dot{\gamma}\rangle_{\tau}\right)ds_{\tau}\nonumber \\
 & -\int_{\Gamma_{\tau}}\rho\eta_{0}'\left(\phi_{\tau,x_{0}}(x)\right)\frac{\tau^{-2}}{r_{M}^{2}}\tau^{-1}\|\dot{\gamma}\|_{\tau}^{2}ds_{\tau}\nonumber \\
 & =M_{1}+M_{2}+M_{3}.\nonumber 
\end{align}
Here 
\[
M_{1}=-\int_{\Gamma_{\tau}}\rho\eta|k+\langle\dot{\gamma},\boldsymbol{n}\rangle_{\tau}|^{2}ds_{\tau}+\int_{\Gamma_{\tau}}\rho\eta_{0}'\left(\phi_{\tau,x_{0}}(x)\right)\frac{\tau^{-2}}{r_{M}^{2}}\langle\dot{\gamma},\boldsymbol{\nu}\rangle^{2}ds_{\tau}\leq0,
\]
which is a good term. The remaining terms in (\ref{eq:M_=00005Ctau2})
are
\begin{align}
M_{2} & :=-\int_{\Gamma_{\tau}}\rho\eta\langle\mathcal{\boldsymbol{T}},\mathcal{J}_{x}\boldsymbol{\nu}\rangle_{\tau}ds_{\tau}\label{eq:M_2 def}\\
 & +\int_{\Gamma_{\tau}}\rho\eta_{0}'\left(\phi_{\tau,x_{0}}(x)\right)\frac{\tau^{-2}}{r_{M}^{2}}\left(\langle\dot{\gamma},k_{\tau}\boldsymbol{n}\rangle_{\tau}-\langle\mathcal{J}_{x}\boldsymbol{\nu},\dot{\gamma}\rangle_{\tau}\langle\mathcal{\boldsymbol{T}},\dot{\gamma}\rangle_{\tau}\right)ds_{\tau},\nonumber \\
M_{3} & :=-\int_{\Gamma_{\tau}}\rho\eta_{0}'\left(\phi_{\tau,x_{0}}(x)\right)\frac{\|\dot{\gamma}\|_{\tau}^{2}}{\tau^{2}r_{M}^{2}}\tau^{-1}ds_{\tau}.
\end{align}
Since $\eta_{0}(r)=0$ for $r\geq2$ and $|\eta_{0}'|<2$ , we see
that 
\begin{equation}
\left|\eta_{0}'\left(\phi_{\tau,x_{0}}(x)\right)\frac{\|\dot{\gamma}\|_{\tau}^{2}}{\tau^{2}r_{M}^{2}}\right|\leq8,\eta_{0}\left(\phi_{\tau,x_{0}}(x)\right)\frac{\|\dot{\gamma}\|_{\tau}^{2}}{\tau^{2}r_{M}^{2}}\leq4.\label{d_=00005Ctauestimate}
\end{equation}
Let $d_{\tau}=\mathbf{d}_{\tau}(x_{0},x)$. Then by (\ref{eq:=00005CmathcalJ_x})
, we have
\begin{align*}
|\mathcal{J}_{x}| & \leq d^{2}C(K_{M})\leq C(K_{M})d_{\tau}^{2}e^{-2\tau}.
\end{align*}
Thus by (\ref{d_=00005Ctauestimate}), we may assume
\begin{equation}
|\mathcal{J}_{x}|\leq\tau^{2}e^{-2\tau}r_{M}^{2}C(K_{M}).\label{eq:J_x}
\end{equation}
Define $G_{\tau}(x):=\left|\rho\eta_{0}'\left(\phi_{\tau,x_{0}}(x)\right)\right|.$
Since $\eta'_{0}=0$ if $d_{\tau}<2\tau^{2}r_{M}^{2}$, we have $G_{\tau}(x)\leq2\exp(-\tau^{2}r_{M}^{2})$
and 
\begin{equation}
\int_{\Gamma_{\tau}}G_{\tau}ds_{\tau}\leq L(0)e^{2\tau-\tau^{2}r_{M}^{2}}.\label{eq:B_=00005Ctau}
\end{equation}
Hence, by (\ref{eq:J_x}), (\ref{eq:B_=00005Ctau}) and the fact that
$k_{\tau}<C(k)$, we obtain that
\begin{align}
|M_{2}| & \leq C(K_{M},r_{M})e^{-2\tau}\tau^{2}\mathcal{M}_{\tau}+C(r_{M},C(k),K_{M})\tau^{-1}\int_{\Gamma_{\tau}}G_{\tau}ds_{\tau}\label{eq:M_2estimate}\\
 & \leq C(K_{M},r_{M})e^{-2\tau}\tau^{2}\mathcal{M}_{\tau}+C(r_{M},C(k),K_{M},L(0))e^{2\tau-\tau^{2}r_{M}^{2}}\tau^{-1}.\nonumber 
\end{align}
Similarly, for $M_{3}$, we have 
\begin{align}
|M_{3}| & \leq C\tau^{-1}\int_{\Gamma_{\tau}}G_{\tau}ds_{\tau}\label{eq:M_3estimate}\\
 & \leq C(L(0))\tau^{-1}\exp(2\tau-\tau^{2}r_{M}^{2}).\nonumber 
\end{align}
Since $M_{1}\leq0$, by (\ref{eq:M_=00005Ctau2}),(\ref{eq:M_2estimate})
and (\ref{eq:M_3estimate}), we have
\[
\frac{d}{d\tau}\mathcal{M}_{\tau}\leq C_{1}e^{-2\tau}\tau^{2}\mathcal{M}_{\tau}+C_{2}\exp(2\tau-\tau^{2}r_{M}^{2})\tau^{-1}.
\]
Let $p(\tau):=\exp\left(-C_{1}\int_{0}^{\tau}e^{-2y}y^{2}dy\right).$
Then
\[
\frac{d}{d\tau}\left(p(\tau)\mathcal{M}_{\tau}\right)\leq C_{2}p(\tau)\exp(2\tau-\tau^{2}r_{M}^{2})\tau^{-1}.
\]
\end{proof}
Note 
\begin{align*}
\lim_{\tau\to\infty}p(\tau) & =\exp(-C_{1}\int_{0}^{\infty}e^{-2y}y^{2}dy)\\
 & =\exp\left(-C_{1}\frac{\Gamma(3)}{2^{3}}\right)<\infty.
\end{align*}
Integrate (\ref{eq:localmonontonicityformula1}) to get

\begin{equation}
\int_{\tau_{0}}^{\tau_{1}}\frac{d}{d\tau}\left(p(\tau)\mathcal{M}_{\tau}\right)d\tau\leq C_{2}\int_{\tau_{0}}^{\tau_{1}}p(\tau)\exp(2\tau-\tau^{2}r_{M}^{2})\tau^{-1}d\tau.\label{eq:integral estimate for p(t)M_t}
\end{equation}
The RHS of (\ref{eq:integral estimate for p(t)M_t}) is clearly finite
as $\tau_{1}\to\infty$. Thus, $\mathcal{M}_{\tau}$ is bounded. Note
$M_{2},M_{3}=o(e^{-\tau})$ as $\tau\to\infty$. Therefore, if $\tau_{1},\tau_{2}\to\infty$,
\[
\left.p(\tau)\mathcal{M}_{\tau}\right|_{\tau_{1}}^{\tau_{2}}=\int_{\tau_{1}}^{\tau_{2}}p(\tau)M_{1}d\tau+\int_{\tau_{1}}^{\tau_{2}}p(\tau)(M_{2}+M_{3})d\tau\to0,
\]
which implies $\lim_{\tau_{1},\tau_{2}\to\infty}\int_{\tau_{1}}^{\tau_{2}}p(\tau)M_{1}d\tau\to0$.
Hence, 
\begin{equation}
\int_{\tau_{1}}^{\tau_{2}}p(\tau)\left(\int_{\Gamma_{\tau}}-\rho\eta|k+\langle\dot{\gamma},\boldsymbol{n}\rangle_{\tau}|^{2}ds_{\tau}+\int_{\Gamma_{\tau}}\rho\eta_{0}'\left(\text{\ensuremath{\phi_{\tau,x_{0}}(x)}}\right)\frac{\langle\dot{\gamma},\boldsymbol{\nu}\rangle^{2}}{r_{M}^{2}\tau^{2}}ds_{\tau}\right)\to0.\label{monontonicity formula 2}
\end{equation}
as $\tau_{1},\tau_{2}\to\infty$.
\begin{thm}
\label{thm:If-Type-I}If a Type I singularity happens, CSF converges
to a round point.
\end{thm}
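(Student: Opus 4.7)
The plan is to follow the Huisken-style rescaling argument, now justified by the localized monotonicity formula (\ref{monontonicity formula 2}). Let $x_{0}\in M$ be a singular point, i.e.\ a point which some subsequence of $\Gamma_{t_{i}}$ (with $t_{i}\to T$ and $\sup_{\Gamma_{t_{i}}}|k|\to\infty$) accumulates to; such a point exists since $M$ is compact. Work with the rescaled flow $\Gamma_{\tau}\subset(M,g_{\tau})$ of Definition \ref{Def Rescaling}. By the Type I hypothesis $\sup_{x}|k_{\tau}(x)|\leq C(k)$ uniformly in $\tau$, so the rescaled curves have uniformly bounded geodesic curvature in a metric that, on bounded $g_{\tau}$-distance from $x_{0}$, $C^{\infty}$-converges to the Euclidean metric on $\mathbb{R}^{2}$ (since $g_{\tau}=\tfrac{e^{2\tau}}{2}g$ blows up a fixed smooth metric at $x_{0}$). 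Standard parabolic regularity for CSF (interior estimates on all derivatives of $k_{\tau}$, as in \cite{huisken1998distance,haslhofer2016lectures}) then yields $C^{\infty}_{\mathrm{loc}}$ bounds on $\Gamma_{\tau}$ in $(M,g_{\tau},x_{0})$.

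Next, I would extract a subsequence $\tau_{i}\to\infty$ for which $(M,g_{\tau_{i}},x_{0},\Gamma_{\tau_{i}})$ converges, in the pointed $C^{\infty}_{\mathrm{loc}}$ sense, to $(\mathbb{R}^{2},g_{\mathrm{Eucl}},0,\Gamma_{\infty})$ for some properly embedded limit curve $\Gamma_{\infty}$. To see that the limit is nontrivial, note that the localized backward heat kernel bound from the theorem just proved gives $\mathcal{M}_{\tau}$ bounded above, while the choice of $x_{0}$ as an accumulation point of singularities together with the $C^{\infty}$ regularity shows that $\mathcal{M}_{\tau_{i}}$ admits a positive lower bound along the subsequence (otherwise the curve escapes the $g_{\tau}$-ball of radius $\sim\tau r_{M}$ about $x_{0}$, contradicting the accumulation). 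Now I would feed (\ref{monontonicity formula 2}) in: the cutoff term $\eta_{0}'(\phi_{\tau,x_{0}}) r_{M}^{-2}\tau^{-2}\langle\dot{\gamma},\boldsymbol{\nu}\rangle^{2}$ is supported where $\mathbf{d}_{\tau}\gtrsim\tau r_{M}$, so $\rho$ forces its contribution to vanish; hence
\begin{equation*}
\int_{\tau_{1}}^{\tau_{2}}p(\tau)\int_{\Gamma_{\tau}}\rho\eta\,\bigl|k_{\tau}+\langle\dot{\gamma},\boldsymbol{n}\rangle_{\tau}\bigr|^{2}\,ds_{\tau}\,d\tau\;\longrightarrow\;0
\end{equation*}
as $\tau_{1},\tau_{2}\to\infty$. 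Along the subsequence, $\dot\gamma$ (the geodesic from $x_{0}$ to the point) converges to the position vector in $\mathbb{R}^{2}$, and $\eta\to 1$, $\rho\to e^{-|x|^{2}/2}$. Passing to the limit therefore forces $\Gamma_{\infty}$ to satisfy the self-shrinker equation
\begin{equation*}
k_{\infty}+\langle x,\boldsymbol{n}\rangle=0
\end{equation*}
on any bounded set, and by the $C^{\infty}_{\mathrm{loc}}$ convergence it satisfies this everywhere.

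Finally, I would invoke the classification of properly embedded self-shrinking curves in $\mathbb{R}^{2}$: by Abresch--Langer \cite{abresch1986normalized} (together with embeddedness, which rules out the non-circular Abresch--Langer curves) or Gage--Hamilton \cite{gage1986heat}, the only such curve is a round circle centered at the origin. Hence $\Gamma_{\infty}$ is a round circle, the radius being pinned by $\mathcal{M}_{\infty}>0$. Independence of the subsequence (and so convergence without passing to subsequences) follows by uniqueness of the blow-up limit for monotonic quantities, or by running the argument at any centered point $x_{0}$ and observing that a non-circular first singularity would force two distinct blow-up models. Standard back-translation to the unrescaled flow gives convergence of $\Gamma_{t}$ to the round point $x_{0}$, proving the theorem. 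The main obstacle I anticipate is the second step: showing $\mathcal{M}_{\tau}$ stays bounded below and making the $C^{\infty}_{\mathrm{loc}}$ convergence in the rescaled ambient space rigorous, since unlike the $\mathbb{R}^{2}$ case we must simultaneously control the flat limit of the ambient metric and the growing cutoff scale $\tau r_{M}$; everything else is a careful bookkeeping of limits in (\ref{monontonicity formula 2}).
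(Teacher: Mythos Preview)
Your overall strategy matches the paper's proof: rescale the ambient metric about a blow-up point $x_{0}$, use the Type I bound to get uniform curvature control on $\Gamma_{\tau}$, pass to a limit in $(\mathbb{R}^{2},g_{\mathrm{Eucl}})$, and use the localized monotonicity formula (\ref{monontonicity formula 2}) to conclude that the limit satisfies the self-shrinker equation $k_{\infty}+\langle x,\boldsymbol{n}\rangle=0$. This part is fine.

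There is, however, a genuine gap in your classification step. You assert that ``the only such curve is a round circle,'' but this is false: a straight line through the origin is also a properly embedded self-shrinker in $\mathbb{R}^{2}$, and nothing you have said rules it out. Embeddedness eliminates the non-circular closed Abresch--Langer curves, but not the line; and your lower bound $\mathcal{M}_{\infty}>0$ does not help either, since the Gaussian integral along a line through the origin is $\sqrt{2\pi}>0$. The paper closes this gap with an additional ingredient you are missing: a maximum-principle argument (cf.\ \cite{chou2001curve}, Chapter~5) showing $\sup_{\Gamma_{\tau}}|k_{\tau}|\geq C>0$ for all $\tau$, which forces the limit to have nonzero curvature somewhere and hence excludes the line.

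Two smaller remarks. First, for nontriviality of the limit the paper gives a more direct argument than your $\mathcal{M}_{\tau}$ lower bound: if $p_{i}\in\Gamma_{t_{i}}$ realizes the curvature maximum and $p_{i}\to x_{0}$, then the Type~I bound gives
\[
\mathbf{d}_{\tau_{i}}(p_{i},x_{0})\;\leq\;\tfrac{1}{\sqrt{2}}e^{\tau_{i}}\int_{t_{i}}^{T}|k|\,dt\;\leq\;C\,e^{\tau_{i}}\sqrt{T-t_{i}}\;=\;C',
\]
so the rescaled curves meet a fixed ball about the origin. Second, your remark about ``independence of the subsequence'' via uniqueness of monotone blow-ups is not actually carried out in the paper; the paper is content with subsequential convergence to a circle, which already yields the round-point conclusion.
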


\begin{proof}
The proof is quite standard, see \cite{angenent1991formation,altschuler1991singularities}.
The difference here is that instead of rescaling the curve, we need
to rescale the metric near a blow-up point. Suppose that there exists
a sequence $\{t_{i}\}$ such that $p_{i}\in\Gamma_{t_{i}}$ and the
geodesic curvature $|k(p_{i})|=\sup_{x\in\Gamma_{t_{i}}}|k(x)|\to\infty$
as $t_{i}\to T$. We may assume that up to a subsequence $p_{i}\to x_{0}$
for some $x_{0}\in M$. By the definition of Type I singularity, 
\begin{equation}
\sup_{\Gamma_{t}}k^{2}(T-t)<C<\infty.\label{Type I condition}
\end{equation}
 Let $\tau=-\frac{1}{2}\ln(T-t)$. We blow up at $x_{0}$ by rescaling
the metric centered at $x_{0}$ using Definition \ref{Def Rescaling}.
Clearly, the pointed metric space $(M,x_{0},g_{\tau})$ converges
to the tangent plane $(\mathbb{R}^{2},0,g_{\mathbb{R}^{2}})$. Since
$\Gamma_{\tau}$ has bounded curvature by (\ref{Type I condition})
and by the estimates from parabolic equations, $\Gamma_{\tau}$ converges
to a limit curve $\Gamma^{\infty}$ on the limit metric space $\mathbb{R}^{2}$
as $\tau\to\infty$ \cite{angenent1991formation,altschuler1991singularities}.
Note by (\ref{Type I condition}), we can estimate the distance between
$p_{i}$ and $x_{0}$:
\begin{align}
d_{\tau_{i}}(p_{i},x_{0}) & =\frac{1}{\sqrt{2}}e^{\tau_{i}}\mathbf{d}_{g}(p_{i},x_{0})\leq\frac{1}{\sqrt{2}}e^{\tau_{i}}\int_{t_{i}}^{T}|k|dt\label{eq:distance (p_i,x_0)}\\
 & \leq Ce^{\tau_{i}}\sqrt{T-t_{i}}=C'.\nonumber 
\end{align}
Hence, $\Gamma^{\infty}$ is not empty by (\ref{eq:distance (p_i,x_0)}).
By (\ref{monontonicity formula 2}), the curve $\Gamma^{\infty}$
satisfies
\begin{equation}
k^{\infty}+\langle\dot{\gamma}^{\infty},\boldsymbol{n}\rangle=0,\label{eq:self similar solution}
\end{equation}
where $k^{\infty}$ is the curvature of $\Gamma^{\infty}$ and $\dot{\gamma}^{\infty}$
now represents the position vector of $\Gamma^{\infty}$ on the tangent
plane $\mathbb{R}^{2}$. (\ref{eq:self similar solution}) represents
a self-similar solution of CSF on $\mathbb{R}^{2}$. Since $\Gamma_{\tau}$
are embedded closed curves, $\Gamma^{\infty}$ has only two possibilities:
a round circle or a straight line by Abresch-Langer classification
of self-similar solutions \cite{abresch1986normalized}. Maximum principle
shows that $\sup_{\Gamma_{\tau}}|k_{\tau}|\geq C>0$, see \cite{chou2001curve}
chapter 5 . Thus, $\Gamma^{\infty}$ can not be a straight line and
hence is a round circle.
\end{proof}
Now, we can prove the Gage-Hamilton-Grayson Theorem on surfaces. 
\begin{proof}
[Proof of Theorem \ref{thm:G-H-G thm}]For any CSF on a surface, by
Theorem \ref{thm:R(t) is boundeddd}, it can not develop Type II singularities.
Therefore, the first time singularity, if exists, is of Type I, which
by Theorem \ref{thm:If-Type-I}, implies that the CSF converges to
a round point. Hence, we have proved Gage-Hamilton-Grayson Theorem
on surfaces.
\end{proof}

\section{Conic Riemann surfaces}

\label{sec:conic surfaces}In this section, we focus on conic Riemann
surfaces. Let $M$ be a closed Riemann surface. Let $\{p_{i}\}_{i=1}^{k}\subset M$,
\[
D=\sum_{i=1}^{k}\beta_{i}p_{i},
\]
be a divisor on $M$ with $\beta_{i}>-1$. 
\begin{defn}
We call $(M,g,D)$ a \textbf{conic Riemann surface} with divisor $D$
if each $\beta_{i}\in(-1,0)$, $g$ is smooth in $M\backslash\{p_{i}\}$,
and near each $p_{i}$ , there exists a local holomorphic coordinate
$z$ such that the metric 
\begin{equation}
g=|z-p_{i}|^{2\beta_{i}}e^{2h(z)}|dz|^{2},\label{eq:conic metric def}
\end{equation}
for some continuous function $h$ in certain function spaces. If\textbf{
}for some point $p_{i}$, \textbf{$g$ }still has local expression
(\ref{eq:conic metric def}) but $\beta_{i}>0$, we call\textbf{ }$(M,g,D)$
a generalized Riemann surface. 
\end{defn}

We will restrict to conic Riemann surfaces from now on. The function
spaces for $h(z)$ in (\ref{eq:conic metric def}) is $C^{\infty}(M-\{p_{i}\})\cap C^{2,\alpha}(M,D)$,
where $C^{2,\alpha}(M,D)$ is the weighted H$\ddot{o}$lder space
defined by H. Yin \cite{yin2010ricci}:
\begin{defn}
(Weighted H$\ddot{o}$lder space).\label{def:(Weighted-Hlder-space).}
Let $p$ be a conic point of $M$. Let $U\subset M$ be a disk neighborhood
of $p$ such that the divisor $D|_{U}=\beta p$. Let $z=re^{\sqrt{-1}\theta}$
be a local holomorphic coordinate centered at $p$. Define the weighted
H$\ddot{o}$lder norm for $f$ to be 
\[
\|f\|_{C^{\ell,\alpha}(U,\beta p)}:=\sup_{m\in\mathbf{N}}\left\Vert F_{m}(s,\theta)\right\Vert _{C^{\ell,\alpha}\left(\left(2^{-1}\leq s\leq2\right)\times S^{1}\right)},
\]
where $F_{m}(s,\theta)\equiv f\left((1+\beta)^{\frac{1}{1+\beta}}\left(2^{-m}s\right)^{\frac{1}{1+\beta}}e^{\sqrt{-1}\theta}\right)$.
It is proved in \cite{yin2010ricci} that the definition of $C^{l,\alpha}(U,\beta p)$
does not depend on the choice of a local coordinate.

On $M$, take $(U_{i},p_{i})$ to be a disk neighborhood of each conic
point $p_{i}$. Cover $M-\cup_{i=1}^{k}U_{i}$ by finite many open
sets $V_{k}$. We define the weighted H$\ddot{o}$lder norm:
\[
\|f\|_{C^{l,\alpha}(M,D)}:=\sum_{k}\|f\|_{C^{l,\alpha}(V_{k})}+\sum_{i}\|f\|_{C^{l,\alpha}(U_{i},\beta_{i}p_{i})}.
\]
Then, $f$ belongs to $C^{l,\alpha}(M,D)$ if it has finite weighted
H$\ddot{o}$lder norm. 
\end{defn}

For each $p_{i}$, the number $\theta_{i}=2\pi(1+\beta_{i})$ represents
the\textbf{ cone angle }at $p_{i}$. The curvature of $g$ near $p_{i}$
is given by
\begin{align*}
\mathcal{K}_{g}(z) & =|z-p_{i}|^{-2\beta_{i}}e^{-2h(z)}\left[-2\partial_{z}\partial_{\bar{z}}\log\left(|z-p_{i}|^{2\beta_{i}}e^{2h(z)}\right)\right]\\
 & =e^{-2h(z)}|z-p_{i}|^{-2\beta_{i}}\left(-4\partial_{z}\partial_{\bar{z}}h(z)\right).
\end{align*}
The assumption $h\in C^{2,\alpha}(M,D)$ indicates that $|z-p_{i}|^{-2\beta_{i}}\partial_{z}\partial_{\bar{z}}h(z)$
is bounded. Hence, the curvature $\mathcal{K}_{g}$ is bounded. 

The following proposition will be assumed and the detailed discussion
can be found in the appendix. We mention that a geodesic polar coordinate
near each conic point which is obtained by Troyanov \cite{troyanov1990coordonnees}.
\begin{prop}
Let $(M,g,D)$ be a conic Riemann surface with $D=\sum_{i=1}^{k}\beta_{i}p_{i}$.
The following holds.
\begin{enumerate}
\item There is a uniform $r_{s}>0$ such that for $r<r_{s}$, at each $p_{i}$,
the punctured geodesic disk $D_{r}(p_{i})-p_{i}\subset M$ is diffeomorphic
to a flat cone with angle $2\pi(1+\beta_{i})$. Each diffeomorphism
is given by an exponential map. 
\item There exists $d_{M}<r_{s}/2$ such that for any $q\in M-\cup_{i}D_{3r_{s}/4}(p_{i})$,
the ordinary exponential map $\exp_{q}:D_{d_{M}}(0)\subset T_{p}M\to M$
gives a diffeomorphism to its image.
\item The distance between any $2$ points in $M$ is realized by a smooth
shortest geodesic. 
\end{enumerate}
\label{lem:diffeo Lem 5.1}

\end{prop}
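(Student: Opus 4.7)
The plan is to handle the three parts separately, all based on a careful local analysis of the metric near each conic point using a Troyanov-type geodesic polar coordinate \cite{troyanov1990coordonnees}, combined with standard Riemannian arguments in the smooth region. First, I would fix a small radius $r_0$ such that the disks $D_{r_0}(p_i)$ are pairwise disjoint and each is contained in a coordinate chart of the form (\ref{eq:conic metric def}) with $h \in C^{2,\alpha}(M,D)$.

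For part (1), I would invoke Troyanov's construction: in the coordinate $z$ near $p_i$, introduce $w = z^{1+\beta_i}/(1+\beta_i)$, which conjugates the model flat cone $|z|^{2\beta_i}|dz|^{2}$ to $|dw|^{2}$ on a sector of opening $2\pi(1+\beta_i)$. The full metric becomes $e^{2\tilde h(w)}|dw|^{2}$ on this sector with $\tilde h$ continuous up to $w=0$. A standard ODE argument in geodesic polar coordinates $(r,\theta)$ based at $w=0$ (writing $g = dr^{2} + J(r,\theta)^{2}\,d\theta^{2}$ with $J \to r$ as $r \to 0$) shows that radial rays are geodesics emanating from the cone tip and that the exponential map $\exp_{p_i}$, defined on the punctured flat-cone tangent space, is a local diffeomorphism on a punctured disk of some radius. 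Compactness of the finite set $\{p_i\}$ produces a uniform $r_s$.

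For part (2), I would note that on the compact set $M - \cup_i D_{3r_s/4}(p_i)$ the metric is smooth with Gaussian curvature bounded (as already computed in the paragraph following the definition of the weighted H\"older space, since $|z|^{-2\beta_i}\partial_z\partial_{\bar z}h$ is bounded thanks to $h \in C^{2,\alpha}(M,D)$). Whitehead's theorem combined with continuity of the injectivity radius in a compact smooth region then yields a uniform lower bound $d_M$; shrinking if necessary ensures $d_M < r_s/2$, which keeps the exponential ball safely inside a region where ordinary Riemannian geometry applies.

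For part (3), I would first establish metric completeness: $M$ is compact and the conic distance defines a complete length metric (no geodesic can run off to infinity because curves of bounded length are precompact), so a Hopf--Rinow type theorem for length spaces produces a rectifiable minimizer between any two points. The remaining issue is to show the minimizer is smooth, i.e.\ avoids all $p_i$. Here I would use the crucial hypothesis $\beta_i \in (-1,0)$, giving cone angle $2\pi(1+\beta_i) < 2\pi$: any rectifiable curve passing through a cone point $p_i$ can be strictly shortened by cutting off a small neighborhood and replacing it with a geodesic arc avoiding $p_i$, because in the flat-cone model $C_{(1+\beta_i)\pi}$ with angle less than $2\pi$ the straight segment from an incoming point to an outgoing point (developed into $\mathbb R^2$) is shorter than any broken path through the vertex. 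The conformal perturbation by $e^{2\tilde h}$ with $\tilde h$ continuous near $0$ introduces only lower-order corrections, which can be absorbed by taking the shortcut small. Once the minimizer avoids $\{p_i\}$, it lies in the smooth region and is therefore a smooth geodesic.

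The main obstacle is step (3): turning the intuitive ``round the cone tip'' argument into a rigorous shortening estimate in the presence of the $e^{2\tilde h}$ factor. Formally, this requires a first-variation computation in Troyanov's polar chart showing that for any approach--exit pair of points in a small punctured disk around $p_i$, the infimum of lengths of paths confined to that disk is achieved by a curve bounded away from $p_i$, with the gap controlled uniformly by the angle deficit $2\pi - 2\pi(1+\beta_i) = -2\pi\beta_i > 0$. Once this quantitative shortening is in hand, (3) follows quickly and parts (1)--(2) are essentially bookkeeping.
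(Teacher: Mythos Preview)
Your proposal is correct and follows essentially the same route as the paper for parts (1) and (2): Troyanov's geodesic polar coordinates at each $p_i$ to build the exponential map from the flat tangent cone, take $r_s$ as the minimum of the local radii, and invoke compactness plus bounded curvature on the smooth part for a uniform injectivity radius $d_M$.

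For part (3) both you and the paper start from Hopf--Rinow for complete locally compact length spaces to obtain a minimizer, then argue it cannot pass through a conic point in its interior using the angle deficit $-2\pi\beta_i>0$. The difference is in how the ``shortcut'' is made rigorous. You propose a direct first-variation estimate in the polar chart, explicitly controlling the conformal factor $e^{2\tilde h}$ as a lower-order perturbation; you correctly flag this as the main technical obstacle. The paper sidesteps this obstacle by a blow-up argument: it rescales the metric by $\lambda_i^{-2}$ around $p_i$ and uses that the pointed limit is the flat cone $C_{(2+2\beta_i)\pi}$, so the rescaled distance between the two approach points converges to the flat-cone distance $2\tau^2(1-\cos\delta\theta)$, which is strictly less than $(2\tau)^2$ because $\delta\theta=(1+\beta_i)\min\{|\theta_2-\theta_1|,2\pi-|\theta_2-\theta_1|\}<\pi$. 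This yields the contradiction without ever having to quantify the $e^{2\tilde h}$ correction directly. Your approach would work but requires an extra estimate; the paper's tangent-cone limit absorbs that estimate into the convergence statement and is cleaner. One minor point: your phrasing ``avoids all $p_i$'' should be ``avoids all $p_i$ in its interior,'' since the endpoints themselves may be conic.
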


\begin{proof}
At each conic point $p_{i}$, choose a $\rho_{0,i}$ in Lemma \ref{lem:().local diff}
(1) in the appendix. We may choose $r_{s}=\min\{\rho_{0,i}\}$. The
diffeomorphism to a flat cone near $p_{i}$ is given in Proposition
\ref{lem:ttangent cone}. Since $d_{M}<\frac{r_{s}}{2}$, the existence
of a local normal coordinate near each regular point in $M-\cup_{i}D_{3r_{s}/4}(p_{i})$
is guaranteed, and the exponential map gives a local diffeomorphism.
(3) is proved in Proposition \ref{lem: geodesic for regular points}
in the appendix.
\end{proof}
Next, we discuss the covering space of $M$. If there are only $2$
singularities, we can join the two singularities $p,q$ with a smooth
curve $\gamma_{p,q}$. Then we can take two copies of $M-\gamma_{p,q}$
and glue them along the corresponding sides of the edges. This gives
a \textbf{double cover} of $M$ branched at $p$ and $q$. See Figure
\ref{fig:Double-branched-cover}.
\begin{figure}
\centering

\includegraphics[width=6cm]{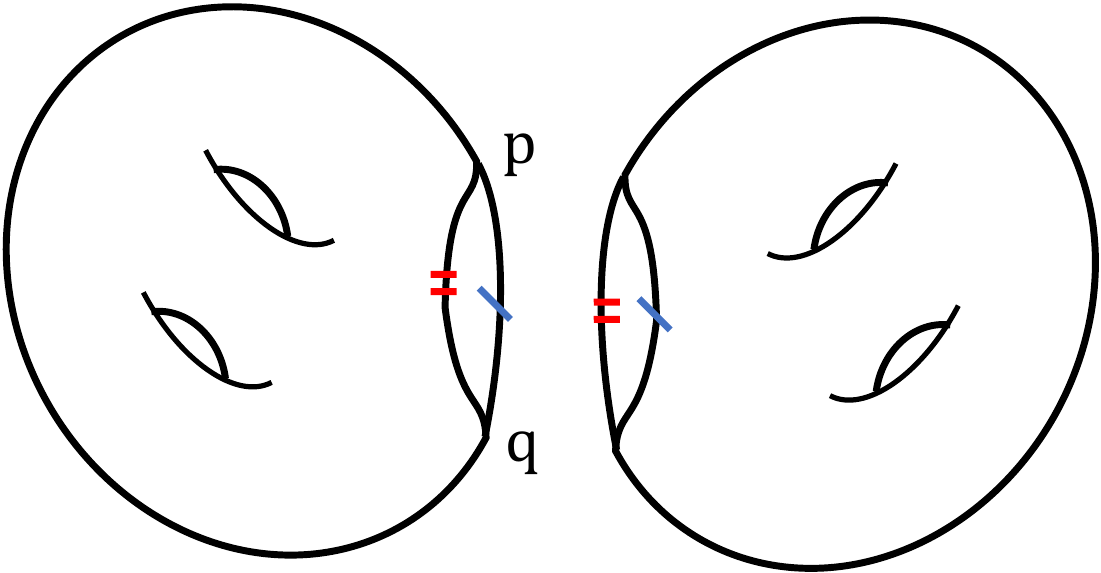}

\caption{\label{fig:Double-branched-cover}Double branched cover}
\end{figure}

Let $\mathcal{S}$ be a collection of conic points $p_{i}$ with cone
angles $\leq\pi$. If $\#\mathcal{S}$ is even, we can pair conic
points, for instance, $p,q$, in $\mathcal{S}$, and connect them
with a smooth curve $\gamma_{p,q}$. If $p,q,r,s\in\mathcal{S}$,
$(p,q)$ and $(r,s)$ are paired and $\gamma_{p,q}$ cross $\gamma_{r,s}$
at a regular point $a\in M$, let $U$ be a small open neighborhood
of $a$that does not contain any conic points and any other crossings.
To resolve the crossing at $a$, we remove the crossing $U\cap\left(\gamma_{p,q}\cup\gamma_{r,s}\right)$
and paste back $2$ smooth caps so that $(p,r)$ and $(q,s)$ are
connected. We then resolve all crossings one by one. As a result,
the points in $\mathcal{S}$ can be paired in a way such that the
connecting curves $\gamma_{p,q}$ and $\gamma_{r,s}$ for any two
pairs $(p,q)$ and $(r,s)$ do not cross each other. We take two
copies of $M$ and obtain a double branched cover $\tilde{M}$ by
first cutting along each connecting curve $\gamma_{p,q}$ and then
gluing each side to another copy. $\tilde{M}$ will branch exactly
at the points in $\mathcal{S}$. 

If $\#\mathcal{S}$ is odd, we can add a regular point $q$ on $M$
to $\mathcal{S}$ which we viewed as a ``singularity'' with cone
angle $2\pi$, we then make the branched double cover $\tilde{M}$
like before. The resulting space will have a singularity at $q$ with
cone angle $4\pi$. The following lemma enables us to pick a regular
point away from the curve shortening flow. 
\begin{lem}
Suppose that $X(x,t)$ exists for $t\in[0,T)$. Then, there is a point
$q\in M-\{p_{i}\}$ such that $\exists\epsilon,\delta>0$ for $t\in[T-\epsilon,T)$,
$B_{\delta}(q)\cap\{X(x,t):\forall x\in S^{1}\}=\varnothing$. \label{lem: regular point away from csf}
\end{lem}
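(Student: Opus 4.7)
The plan is to bound the area swept by the curve shortening flow over a short time interval ending at $T$, and combine this with the finite total area of $M$ to locate a regular point that the flow avoids.

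First, I would estimate the area of the image $X(S^{1}\times[T-\epsilon,T))\subset M$. Since $\partial_{t}X=k\boldsymbol{n}$ and $\partial_{u}X$ is tangent to $\Gamma_{t}$, the two velocity vectors are orthogonal and their product of norms equals $|k|\,\|\partial_{u}X\|$. The area formula therefore gives
\[
\mathrm{Area}\bigl(X(S^{1}\times[T-\epsilon,T))\bigr)\;\leq\;\int_{T-\epsilon}^{T}\!\int_{\Gamma_{t}}|k|\,ds\,dt.
\]
Using $L(t)\leq L(0)$, applying Cauchy--Schwarz first on $\Gamma_{t}$ and then in $t$, and invoking $\int_{0}^{T}\int_{\Gamma_{t}}k^{2}\,ds\,dt\leq L(0)$ (from \eqref{eq:L(t) decay}), I would obtain
\[
\int_{T-\epsilon}^{T}\!\int_{\Gamma_{t}}|k|\,ds\,dt\;\leq\;\sqrt{\int_{T-\epsilon}^{T}\!L(t)\,dt}\;\cdot\;\sqrt{\int_{T-\epsilon}^{T}\!\int_{\Gamma_{t}}k^{2}\,ds\,dt}\;\leq\;L(0)\sqrt{\epsilon}.
\]

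Next, the same bound passes to the closure $A_{\epsilon}:=\overline{X(S^{1}\times[T-\epsilon,T))}$. Any point in $A_{\epsilon}\setminus X(S^{1}\times[T-\epsilon,T))$ must be a limit of a sequence $X(u_{n},t_{n})$ with $t_{n}\to T$, so the collection of such additional limit points lies in $\bigcap_{\delta>0}\overline{X(S^{1}\times[T-\delta,T))}$, which has area zero by the estimate above applied with $\delta\to0$. Hence $A_{\epsilon}$ has area at most $L(0)\sqrt{\epsilon}$.

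Finally, $M$ is compact with finite total area (the conic density $|z|^{2\beta_{i}}$ is integrable near each $p_{i}$ since $\beta_{i}>-1$), and $\{p_{i}\}$ is finite. Choosing $\epsilon$ so small that $L(0)\sqrt{\epsilon}<\mathrm{Area}(M)$ forces $M\setminus(A_{\epsilon}\cup\{p_{i}\})$ to be a nonempty open subset of $M\setminus\{p_{i}\}$. Picking any $q$ in this set and any sufficiently small $\delta>0$ with $B_{\delta}(q)\subset M\setminus A_{\epsilon}$ gives the desired disjointness $B_{\delta}(q)\cap\Gamma_{t}=\varnothing$ for all $t\in[T-\epsilon,T)$. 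The only mildly delicate issue is the use of the area formula across the conic singularities, but since $\{p_{i}\}$ has area zero and the CSF is smooth away from it, this presents no real obstacle.
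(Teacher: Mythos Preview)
Your proof is correct and gives a clean, self-contained argument. The area formula step, the double Cauchy--Schwarz bound yielding $\int_{T-\epsilon}^{T}\int_{\Gamma_t}|k|\,ds\,dt\le L(0)\sqrt{\epsilon}$, and the passage to the closure are all sound; the observation that the extra limit points of $A_\epsilon$ lie in $\bigcap_{\delta>0}\overline{X(S^{1}\times[T-\delta,T))}$ and hence have measure zero is the right way to handle the boundary at $t=T$.

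The paper's proof is different and much terser: it argues by contradiction, asserting that if every regular point $q$ is approached by the flow arbitrarily late and arbitrarily closely, then $\limsup_{t\to T}L(t)=\infty$, contradicting $L(t)\le L(0)$. That argument is brief to the point of being sketchy, since the negation only yields, for each $q$, a sequence of times $t_i(q)\to T$ at which $\Gamma_{t_i(q)}$ comes close to $q$; extracting a single time at which the curve is spread over many points (and hence long) requires further reasoning that the paper omits. Your approach sidesteps this issue entirely by bounding the \emph{swept area} rather than the length at a fixed time, and then invoking $\mathrm{Area}(M)<\infty$. This is arguably more robust and certainly more explicit; the trade-off is that it uses slightly more machinery (the area formula and the $\int k^2$ identity), whereas the paper's intended argument is morally a one-liner once the length--density heuristic is accepted.
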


\begin{proof}
We argue by contradiction. Suppose that for any $q\in M-\{p_{i}\}$,
there is a sequence of $\epsilon_{i}$ goes to 0 such that $t_{i}=T-\epsilon_{i}$
and $B_{\epsilon_{i}}(q)\cap X(\cdot,t_{i})\not=\varnothing.$ This,
however, means that $\limsup_{t\to T}L(t)=\infty$, which is impossible.
\end{proof}
Suppose that the covering map is given by $\boldsymbol{\pi}:\tilde{M}\to M$.
We can equip $\tilde{M}$ with the pull-back metric. We call $\tilde{O}\in\tilde{M}$
a \textbf{doubled conic point} if $\boldsymbol{\pi}(\tilde{O})=O\in\mathcal{S}$.
Suppose that the cone angle at $O$ is $2(1+\beta)\pi$. Then, we
can find a holomorphic coordinate $\xi$ near $O$, and a holomorphic
coordinate $z$ near $\tilde{O}$ such that $\xi(O)=0,z(O)=0$, and
\[
\boldsymbol{\pi}:z\mapsto z^{2}.
\]
We see immediately that the pull-back metric is
\[
\boldsymbol{\pi}^{*}g=|z^{2}|^{2\beta}e^{2h(z^{2})}|dz^{2}|^{2}=4|z|^{2(1+2\beta)}e^{2h(z^{2})}|dz|^{2}.
\]
Thus, the cone angle at $\tilde{O}$ is $2(2+2\beta)\pi$. Let $U$
be a disk neighborhood of $O$. If $h(z)\in C^{2,\alpha}(U,O\beta)$,
then $h(z^{2})\in C^{2,\alpha}(U,O(2\beta+1))$. Moreover, if $\beta\geq-\frac{1}{2}$,
then $h(z^{2})\in C^{2,\alpha}(U)$ in the regular H$\mathbf{\ddot{o}}$lder
space. In particular, if $\beta=-\frac{1}{2}$, then the pull-back
metric $\boldsymbol{\pi}^{*}g$ is a $C^{2,\alpha}$ metric at $\tilde{O}$.
To summarize, we can construct a double cover $\tilde{M}$ of $M$
branched over $\mathcal{S}$ or $\mathcal{S}\cup\{pt\}$ in the sense
of the following Proposition.
\begin{prop}
\label{prop:construction of the branched double cover}Suppose that
$(M,g,D)$ is a conic Riemann surface and $D=\sum_{i}\beta_{i}p_{i}.$
Let $k$ be the number of conic points of $M$. Let $\mathcal{S}$
be the collection of conic points $p_{i}$ with cone angles small
than $\pi$. Then there is generalized Riemann surface $(\tilde{M},\tilde{g},\tilde{D})$
such that
\begin{enumerate}
\item There exists a branched double cover $\boldsymbol{\mathbf{\pi}}:\tilde{M}\to M$
which is locally isometric away from the branched points.
\item If $X(x,t)$ for $t\in[0,T)$ is a CSF on $M$, then the lifting $\tilde{X}$
on $\tilde{M}$ is also a CSF.
\item If $\#\mathcal{S}$ is even, then $\boldsymbol{\pi}$ is branched
exactly at $\mathcal{S}$. $\tilde{M}$ is a conic Riemann surface
with at most $2k-\#\mathcal{S}$ conic points and 
\[
\tilde{D}=\sum_{p_{i}\in\mathcal{S}}(2\beta_{i}+1)p_{i}+\sum_{p_{i}\not\in\mathcal{S}}\beta_{i}\pi^{-1}(p_{i}).
\]
\item If $\#\mathcal{S}$ is odd, then $\boldsymbol{\pi}$ is branched at
$\mathcal{S}\cup\{\hat{p}\}$ for a regular point $\hat{p}$ of $M$.
$\tilde{M}$ has at most $2k-\#\mathcal{S}+1$ singular points and
\[
\tilde{D}=\sum_{p_{i}\in\mathcal{S}}(2\beta_{i}+1)p_{i}+\hat{p}+\sum_{p_{i}\not\in\mathcal{S}}\beta_{i}\pi^{-1}(p_{i}).
\]
In this case, we can pick $\hat{p}$ away from a particular CSF on
$M$ by Lemma \ref{lem: regular point away from csf}. 
\end{enumerate}
\end{prop}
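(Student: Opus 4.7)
My plan is to carry out the construction in three stages: first topologically build $\tilde{M}$ as a branched double cover, then equip it with the pulled-back metric and verify the local form at branch points, and finally check that CSF lifts.

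For the topological step, if $\#\mathcal{S}$ is even, I list the points as $(q_{2j-1},q_{2j})$ and connect each pair by a smooth embedded arc $\gamma_{j}\subset M$. Away from the endpoints (which are isolated conic singularities), the surface is smooth, so such arcs exist by the usual surface topology. The only issue is that different arcs may cross transversely at finitely many smooth points. At each crossing $a\in\gamma_{j}\cap\gamma_{j'}$, I pick a small neighborhood $U$ disjoint from the conic set and from other crossings, delete $U\cap(\gamma_{j}\cup\gamma_{j'})$, and reconnect the four open ends by two disjoint smooth arcs inside $U$, which amounts to replacing the pairing $(q_{2j-1},q_{2j}),(q_{2j'-1},q_{2j'})$ by another pairing of the same four points. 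Iterating on finitely many crossings yields a disjoint collection of smooth arcs whose endpoints cover $\mathcal{S}$ in pairs. When $\#\mathcal{S}$ is odd, I first apply Lemma~\ref{lem: regular point away from csf} to choose a regular point $\hat p$ away from the CSF for some time interval $[T-\epsilon,T)$, add it to $\mathcal{S}$, and proceed as before. Cutting $M$ open along this collection of arcs gives a surface with boundary, two copies of which glue to form a closed topological surface $\tilde M$ together with an obvious two-to-one projection $\boldsymbol{\pi}:\tilde M\to M$ that is a local homeomorphism away from the branch set.

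For the metric, outside the branch set $\boldsymbol{\pi}$ is locally a homeomorphism of smooth surfaces, and I declare it to be a local isometry by pulling back $g$; smoothness across the glued arcs follows because on both sides one has the same ambient metric $g$ and the two sheets are identified via the identity across the cut. At a branch point $\tilde O$ projecting to $O\in\mathcal{S}$ with cone angle $2\pi(1+\beta)$, I use the holomorphic coordinates $\xi$ near $O$ and $z$ near $\tilde O$ with $\boldsymbol\pi(z)=z^{2}$ suggested in the excerpt, so
\[
\boldsymbol\pi^{*}g=4|z|^{2(1+2\beta)}e^{2h(z^{2})}|dz|^{2}.
\]
Since $h\in C^{2,\alpha}(U,\beta O)$, a direct substitution in Definition~\ref{def:(Weighted-Hlder-space).} shows $h(z^{2})\in C^{2,\alpha}(U,(2\beta+1)O)$, so $\boldsymbol\pi^{*}g$ is a valid conic metric with cone weight $2\beta+1\in(-1,0]$ when $\beta\in(-1,-\tfrac12]$ (equivalently cone angle $\leq\pi$), and in fact a $C^{2,\alpha}$ regular metric if $\beta=-\tfrac12$. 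For $\hat p$ (if used), $\beta=0$ produces cone weight $1$, a generalized conic point of angle $4\pi$. At non-branched conic points $p_i\notin\mathcal{S}$, $\boldsymbol\pi^{-1}(p_i)$ consists of two points, each with an isometric neighborhood and hence the same weight $\beta_i$. This establishes (1), (3), and (4) together with the stated divisor formulas, the singularity counts, and the fact that when $\#\mathcal{S}$ is odd we may place $\hat p$ off the CSF.

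Finally, for (2), any CSF $X:S^{1}\times[0,T)\to M$ misses the conic points (its image lies in the smooth locus at each fixed time), and away from the branch set $\boldsymbol{\pi}$ is a covering local isometry. Therefore each lift $\tilde X$ to $\tilde M$ is well-defined on the smooth locus, and since being a CSF is a local condition involving only the geodesic curvature vector $k\boldsymbol n$ (which is preserved by local isometries), $\tilde X$ is again a CSF on $\tilde M$. I expect the most delicate step to be the regularity claim at branch points, i.e.\ verifying that $h(z^{2})\in C^{2,\alpha}(U,(2\beta+1)O)$ and unpacking what this means in Yin's weighted norm; everything else is either topological bookkeeping or a direct consequence of $\boldsymbol\pi$ being a local isometry on the smooth locus.
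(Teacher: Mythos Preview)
Your proposal is correct and follows essentially the same approach as the paper: the paper does not give a separate proof environment for this proposition but instead develops exactly this construction in the paragraphs preceding the statement (pairing points in $\mathcal{S}$, resolving crossings of the connecting arcs, cutting and gluing two copies, computing $\boldsymbol{\pi}^{*}g=4|z|^{2(1+2\beta)}e^{2h(z^{2})}|dz|^{2}$, and invoking the weighted H\"older regularity of $h(z^{2})$). Your write-up is slightly more detailed in places, in particular in making explicit why the lifted flow in (2) is again a CSF, which the paper leaves implicit.
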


\section{CSF on conic surfaces}

We study CSF on conic surfaces in this section. First, we rewrite
Theorem \ref{thm:Suppose-that-first theorem} in the following form.
\begin{thm}
Let $(M,g)$ be a conic Riemann surface with divisor $D=\sum_{i=1}^{k}\beta_{i}p_{i}$.
Suppose that $\mathcal{S}$ contains conic singularities with cone
angles $\leq\pi$. Let $\Gamma_{t}$ be a curve shortening flow parametrized
by $X(x,t)$. Suppose that $X(x,t)$ exists for $(x,t)\in S^{1}\times[0,T)$,
$T<\infty$. Let $\mathbf{d}(\cdot,\cdot)$ be the distance function
on $M$. Then, either $\mathbf{d}(\Gamma_{t},p)>c>0$ for all $t\in[0,T)$
and all $p\in\mathcal{S}$, or $\mathbf{d}(\Gamma_{t},p)\to0$ uniformly
for some $p\in\mathcal{S}$ as $t\to T$. \label{thm:restated thm1}
\end{thm}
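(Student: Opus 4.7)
The plan is to argue by contradiction, lifting the flow via Proposition \ref{prop:construction of the branched double cover} and then exploiting the $\sigma$-symmetry of the lift together with a conic version of the comparison function $R(t)$ from Theorem \ref{thm:Suppose-that-first theorem}.

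Assume neither alternative holds. Then there exist $p\in\mathcal{S}$ and sequences $t_i\to T$, $x_i,y_i\in\Gamma_{t_i}$ with $\mathbf{d}(x_i,p)\to 0$ and $\mathbf{d}(y_i,p)\geq c_0>0$; in particular $L(t_i)\geq c_0$. Use Proposition \ref{prop:construction of the branched double cover} to build the branched double cover $\boldsymbol{\pi}\colon\tilde M\to M$ with $p$ one of the branch points, covering involution $\sigma$, and $\tilde p:=\boldsymbol{\pi}^{-1}(p)$. The cone angle at $\tilde p$ is $2\theta_p\leq 2\pi$, so $\tilde p$ is either smooth (when $\theta_p=\pi$) or a conic point with a gentler defect. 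In the odd-parity case, apply Lemma \ref{lem: regular point away from csf} to place the auxiliary branch point $\hat p$ away from the flow. Lift $\Gamma_t$ to $\tilde\Gamma_t=\boldsymbol{\pi}^{-1}(\Gamma_t)$; away from branch points $\boldsymbol{\pi}$ is a local isometry, so $\tilde\Gamma_t$ evolves by CSF on $\tilde M$. Choose a lift $\tilde x_i\in\boldsymbol{\pi}^{-1}(x_i)\cap\tilde\Gamma_{t_i}$; then $\sigma(\tilde x_i)\in\tilde\Gamma_{t_i}$, and since $\sigma(\tilde p)=\tilde p$ we have $\tilde{\mathbf{d}}(\tilde x_i,\sigma(\tilde x_i))\to 0$.

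The main argument splits on whether $\tilde\Gamma_t$ is connected. If it is connected (a double cover of $\Gamma_t$), then $\sigma$ acts as a fixed-point-free involution on $\tilde\Gamma_t\cong S^1$, hence as the antipodal map; the arclength from $\tilde x_i$ to $\sigma(\tilde x_i)$ along $\tilde\Gamma_{t_i}$ is therefore exactly $L(\tilde\Gamma_{t_i})/2=L(t_i)\geq c_0$, and evaluating the comparison function on $\tilde\Gamma$ at this pair gives
\[
\tilde R(t_i)\ \geq\ \frac{L(\tilde\Gamma_{t_i})\,e^{-Kt_i}}{\pi\,\tilde{\mathbf{d}}(\tilde x_i,\sigma(\tilde x_i))}\sin\!\left(\tfrac{\pi}{2}\right)\ \longrightarrow\ \infty,
\]
contradicting the analog of Theorem \ref{thm:R(t) is boundeddd} on $\tilde M$. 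If $\tilde\Gamma_t$ is a pair of disjoint curves, each component is isometric to $\Gamma_t$ and evolves by CSF on $\tilde M$ with one point approaching $\tilde p$ and another at distance $\geq c_0$; boundedness of $R(t)$ on $\tilde M$ rules out Type II singularities, so the Type I analysis of Section \ref{sec:Type-I} applies and forces each component to shrink to a round point. By $\sigma$-symmetry together with $\tilde x_i\to\tilde p$, that round point must be $\tilde p$ itself, whence $\Gamma_t$ converges uniformly to $p$, contradicting $\mathbf{d}(y_i,p)\geq c_0$.

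The principal obstacle is establishing the boundedness of $R(t)$ on the generalized Riemann surface $\tilde M$. The proof of Theorem \ref{thm:R(t) is boundeddd} relies on a positive injectivity radius $d_M$ and a uniform curvature bound $K_M$, neither of which is available at cone tips: the injectivity radius there is zero and curvature concentrates as a distribution. One must therefore rework Cases 1 and 2 of that proof when the extremal pair $(\bar x,\bar y)$ of $Z_N$ has its minimizing geodesic passing near a cone point, using the geodesic polar coordinates of Proposition \ref{lem:diffeo Lem 5.1} (after Troyanov), the isoperimetric inequality on conic surfaces with cone angles $\leq 2\pi$, and a conic analog of the Jacobi field estimates of Lemma \ref{lem:Jacobian lemma}. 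The most delicate point is to verify that, as the extremizer approaches a cone point, the sign of $\mathcal{L}_\pm Z_N$ is preserved; once this extension is in place, the two-case contradiction above completes the proof of Theorem \ref{thm:restated thm1}.
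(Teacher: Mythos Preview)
Your overall strategy---pass to the branched double cover, use the $\sigma$-symmetry to produce a pair of points on the lifted flow with extrinsic distance going to zero but intrinsic distance bounded below, and derive a contradiction from boundedness of the comparison function---is exactly the paper's approach. The genuine gap in your proposal is that you identify the boundedness of $R(t)$ on the (generally still conic) surface $\tilde M$ as ``the principal obstacle'' and then leave it open, sketching instead a program involving conic Jacobi estimates and a conic isoperimetric inequality. The paper fills this gap, and by a simpler route than you anticipate.

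Two observations make the paper's argument work. First, the Gaussian curvature of a conic metric with $h\in C^{2,\alpha}(M,D)$ is bounded on $M\setminus\{p_i\}$, and minimizing geodesics between regular points never pass through cone tips (Proposition~\ref{lem: geodesic for regular points}). Hence along any such geodesic the Jacobi estimates of Lemma~\ref{lem:Jacobian lemma} apply verbatim; no ``conic analog'' is needed. The only new difficulty is that the extremal pair $(\bar x,\bar y)$ for $Z_N$ may sit on the cut locus near a cone point, where $d$ fails to be smooth. The paper resolves this with a short elementary lemma (Lemma~\ref{lem:cut-locusnot-in a-conic_neighborhood}): at such a point there are exactly two minimizing geodesics, so $Z_N=\min\{Z_N^1,Z_N^2\}$ with each $Z_N^i$ smooth, and a minimum of $Z_N$ is automatically a minimum of each $Z_N^i$. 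One then runs the smooth computation on either $Z_N^i$. Second, in the contradiction scenario one has $L(t)\ge c>0$ for free, and the paper builds this hypothesis into its conic version of the boundedness theorem (Theorem~\ref{thm:conic case R(t) bounded}); this lets one stay in Subcase~2a of the proof of Theorem~\ref{thm:R(t) is boundeddd} and entirely sidestep the Gauss--Bonnet and isoperimetric arguments of Subcase~2b. So your proposed machinery (conic Jacobi fields, conic isoperimetry) is heavier than what is actually required.

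Your case split on whether $\tilde\Gamma_t$ is connected is a legitimate concern that the paper does not address explicitly; it tacitly treats the lift as a single closed curve of length $2L$. Your handling of the disconnected case, however, is not convincing: you invoke the Type~I analysis of Section~\ref{sec:Type-I}, but that section is written for smooth surfaces, and in any event nothing in the hypotheses forces a curvature singularity to form at $\tilde p$ at time $T$. If you wish to close this case, a cleaner route is to note that in the disconnected situation the two components are swapped by $\sigma$ and are disjoint embedded CSFs on $\tilde M$; one can then run a two-curve version of the $Z_N$ argument (with $d$ the distance between a point on each component and no arclength term) to bound their minimal distance from below, again using Lemma~\ref{lem:cut-locusnot-in a-conic_neighborhood} at the cut locus.
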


To prove this Theorem, we use comparison function $R(t)$ on the double
branched cover $\tilde{M}$ described in Proposition \ref{prop:construction of the branched double cover}.
We will prove the boundedness of $R(t)$ with an additional assumption
that the total length $L(t)$ remains positive.
\begin{thm}
Let $\tilde{M}$ be a double cover of $M$ branched at $\mathcal{S}$.
Suppose that $X(x,t)$ exists for $t\in[0,T)$ and $L(t)>c>0$ for
all $t\in[0,T)$. Let $\tilde{X}(x,t)$ be the lift of $X(x,t)$ on
$\tilde{M}$. Let
\[
R(t)=\sup_{x,y}e^{-Kt}\frac{\tilde{L}(t)}{\pi d_{\tilde{M}}(x,y,t)}\sin\frac{\pi\tilde{l}(x,y,t)}{\tilde{L}(t)},
\]
where $\tilde{L}(t)=2L(t)$ is the total length of $\tilde{X}(\cdot,t)$,
$\tilde{l}$ is the arc-length on $\tilde{X}(\cdot,t)$, and $d_{\tilde{M}}(x,y,t)$
is the distance on $\tilde{M}$ between $\tilde{X}(x,t)$ and $\tilde{X}(y,t)$.
Then, there exists a constant $K=K(M,c)$ such that $R(t)$ is bounded
by a universal constant depending on $R(0)$ and $M$. \label{thm:conic case R(t) bounded}
\end{thm}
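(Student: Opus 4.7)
The plan is to reproduce the proof of Theorem \ref{thm:R(t) is boundeddd} on the double branched cover $\tilde{M}$ constructed in Proposition \ref{prop:construction of the branched double cover}. By that construction, the lifted curve $\tilde{X}(x,t)$ is a CSF on $\tilde{M}$ with total length $\tilde{L}(t)=2L(t)>2c>0$, and $\tilde{M}$ is a generalized Riemann surface with Gaussian curvature bounded on its regular part. Assuming $\limsup_{t\to T}R(t)=\infty$, one chooses $N$ large, sets
\[
Z_N(x,y,t)=N\,d_{\tilde M}(x,y,t)-\frac{\tilde L(t)}{\pi}\sin\!\left(\frac{\pi\tilde l(x,y,t)}{\tilde L(t)}\right)e^{-Kt},
\]
and picks the first time $\bar t$ and points $\bar x,\bar y$ where $Z_N$ attains zero. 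The goal is to derive a contradiction from $\mathcal{L}_{\pm}Z_N>0$ at this minimum, following the Case~1/Case~2 dichotomy of the smooth proof.

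To make the calculation legitimate, the first key step is to verify that $(\bar x,\bar y,\bar t)$ lies in a region where $d_{\tilde M}(\cdot,\cdot)$ is smooth and the variation formulas of Lemma \ref{Lemma formulas} apply. By the choice of $N$, the value of $d_{\tilde M}(\bar x,\bar y,\bar t)$ is forced to be uniformly small. I would then argue that the shortest geodesic $\gamma$ joining $\tilde X(\bar x,\bar t)$ and $\tilde X(\bar y,\bar t)$ avoids every cone point of $\tilde M$: for branched points coming from $\mathcal{S}$ the cone angles on $\tilde M$ are $\leq 2\pi$, and a minimizing geodesic between two regular points cannot pass through such a tip (it can always be shortened by cutting the corner in the developed cone); for the possibly added branch point when $\#\mathcal{S}$ is odd, Lemma \ref{lem: regular point away from csf} keeps the CSF uniformly away from it; for non-branched conic points inherited from $M$, the smallness of $d_{\tilde M}(\bar x,\bar y,\bar t)$ and the embeddedness of $\tilde X$ allow one to localize the analysis to a neighborhood not containing them.

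With the distance function smooth at $(\bar x,\bar y,\bar t)$, the rest proceeds as in Theorem \ref{thm:R(t) is boundeddd}. Case~1, $\langle\boldsymbol{T}_1,\boldsymbol{\nu}_1\rangle=\langle\boldsymbol{T}_2,\boldsymbol{\nu}_2\rangle$, is excluded by $\mathcal{L}_+Z_N>0$ after choosing $K$ larger than a multiple of the Jacobi field constant from Lemma \ref{lem:Jacobian lemma}. For Case~2, if $\tilde L(\bar t)$ is not too small (Subcase~2a) then picking $K$ large enough to absorb $4\pi^2/\tilde L^2$ yields $\mathcal{L}_-Z_N>0$; the hypothesis $L(t)>c$ ensures $\tilde L\geq 2c$, so this cutoff can be arranged uniformly on $[0,T)$. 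Otherwise (Subcase~2b), $\tilde X(\cdot,\bar t)$ lies in a small geodesic disk and Gauss--Bonnet together with the Osserman isoperimetric inequality give the required lower bounds on $\int k^2\,ds$; the conic singularities potentially contained in the disk only contribute concentrated positive curvature, which strengthens these inequalities rather than weakens them.

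The main obstacle is precisely the regularity of the distance function: should the shortest geodesic between $\tilde X(\bar x,\bar t)$ and $\tilde X(\bar y,\bar t)$ ever touch a cone singularity of $\tilde M$, the variation formulas of Lemma \ref{Lemma formulas} would cease to be valid and the entire contradiction scheme would collapse. Overcoming this relies on the flat cone normal form of Proposition \ref{lem:diffeo Lem 5.1}, together with the observation that on a cone of angle $\leq 2\pi$ a minimizing geodesic between regular points always bypasses the tip; this is precisely why the double cover in Proposition \ref{prop:construction of the branched double cover} was set up to branch over exactly the points with cone angle $\leq\pi$, ensuring the lifted cone angles at the problematic branched points are at most $2\pi$.
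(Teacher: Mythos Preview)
Your overall strategy is right, but there is a genuine gap in the regularity step. You argue that because minimizing geodesics on a cone of angle $\leq 2\pi$ avoid the tip, the distance function $d_{\tilde M}$ is smooth at $(\bar x,\bar y,\bar t)$. This inference is false: avoiding the tip is not the same as smoothness. Near any cone point with angle strictly less than $2\pi$ there is a nontrivial cut locus---points from which two distinct minimizing geodesics reach $A$---and along that locus $d_{\tilde M}(A,\cdot)$ is only Lipschitz, not $C^1$. The paper states this explicitly just before Lemma~\ref{lem:cut-locusnot-in a-conic_neighborhood}: ``near conic singularities, cut locus can not be avoided.'' So even after passing to the double cover, the branched points (now with angle $\leq 2\pi$, possibly strict) and the non-branched conic points inherited from $M$ (angle in $(\pi,2\pi)$) still carry cut loci, and nothing prevents $(\bar x,\bar y)$ from landing there. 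Your claim that one can ``localize the analysis to a neighborhood not containing'' the non-branched conic points is likewise unjustified: the smallness of $d_{\tilde M}(\bar x,\bar y,\bar t)$ does not keep $\tilde X(\bar x,\bar t)$ away from those points.

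The paper's fix is Lemma~\ref{lem:cut-locusnot-in a-conic_neighborhood}: if $B$ lies in the cut locus of $A$ near a conic point, there are exactly two minimizing geodesics, so $Z_N=\min\{Z_N^1,Z_N^2\}$ for two smooth functions built from the two branches of the distance; a local minimum of the minimum is then a local minimum of each branch, and one runs the variation argument of Theorem~\ref{thm:R(t) is boundeddd} on $Z_N^1$ or $Z_N^2$ separately. This is the missing ingredient in your proposal. A secondary remark: your treatment of Subcase~2b is unnecessary and the claim that cone contributions ``only strengthen'' the inequalities would need care. The paper simply notes that $\tilde L(t)=2L(t)\geq 2c>0$, so choosing $K>4C_1+4\pi^2/c^2$ makes Subcase~2a cover all times, and Subcase~2b is never invoked.
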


Let us postpone the proof of Theorem \ref{thm:conic case R(t) bounded}
and give a quick proof of Theorem \ref{thm:restated thm1}.
\begin{proof}
[Proof of Theorem \ref{thm:restated thm1} assuming Theorem \ref{thm:conic case R(t) bounded}]Let
$\tilde{M}$ be the double cover of $M$ branched at $\mathcal{S}$.
As $t\to T$, if ${\rm \mathbf{d}}_{M}(X(\cdot,t),p)\to0$ for some
$p\in\mathcal{S}$. Then, $\mathbf{d}_{\tilde{M}}(\tilde{X}(\cdot,t),\boldsymbol{\pi}^{-1}(p))\to0$
as well. Here, $\mathbf{d}_{M}$ denote the distance function on $M$
and $\mathbf{d_{\tilde{M}}}$ denote the distance function on $\tilde{M}.$

If $\tilde{X}$ does not shrink to $p$, then $\lim_{t\to T}\tilde{L}(t)\geq c>0$.
Let $\{(x_{i},t_{i})\}_{i=1}^{\infty}$ be a sequence such that $\tilde{X}(x_{i},t_{i})\to p$.
Then, $\tilde{X}(x_{i}+\frac{\tilde{L}}{2},t_{i})\to p$ by the symmetry
of the lifting. Now, evaluating at $(x_{i},t_{i})$ and $(x_{i}+\frac{\tilde{L}}{2},t_{i})$,
we have
\[
R(t_{i})\geq e^{-Kt_{i}}\frac{\tilde{L}(t_{i})}{\pi d_{\tilde{M}}(x_{i},x_{i}+\frac{\tilde{L}}{2},t_{i})}\sin\left(\frac{\pi}{2}\right)\geq e^{-KT}\frac{c}{\pi d_{\tilde{M}}(x_{i},x_{i}+\frac{\tilde{L}}{2},t_{i})},
\]
where $d_{\tilde{M}}(x,y,t)=\mathbf{d}_{\tilde{M}}(X(x,t),X(y,t))$.
The right hand side blows up as $i\to\infty$ because 
\[
d_{\tilde{M}}(x_{i},x_{i}+\frac{\tilde{L}}{2},t_{i})\leq2\mathbf{d}_{\tilde{M}}(\tilde{X}(x_{i},t_{i}),\boldsymbol{\pi}^{-1}(p))\to0.
\]
However, $R(t)$ is bounded, so we have reached a contradiction. 
\end{proof}
Now, we only need to prove the boundedness of $R(t)$. In order to
apply the argument in Theorem \ref{thm:R(t) is boundeddd}, we have
to take the derivatives of the distance function. For smooth surfaces,
by choosing a big upper-bound for $R(t)$, we can always restrict
ourselves inside a small disk in order to stay away from the cut locus.
However, near conic singularities, cut locus can not be avoided. 

Let $O$ be a conic point and let $A$ be a point in a disk neighborhood
$U$ centered at $O$ such that ${\rm diam}(U)<\min\{r_{s},\pi/\sqrt{K_{M}}\}$
where $r_{s}$ is from Proposition \ref{lem:diffeo Lem 5.1}. We denote
the cut locus of $A$ as $\mathcal{C}_{A}$. By Rauch comparison principle,
$\mathcal{C}_{A}\cap U$ does not contain conjugate points if ${\rm diam}(U)<\frac{\pi}{\sqrt{K_{M}}}$
. Therefore, 
\[
\mathcal{C}_{A}\cap U=\{B\in U:\exists{\rm geodesics}\ \gamma_{1}\not=\gamma_{2}\ {\rm realizing\ distance\ }\mathbf{d}(A,B)\}.
\]
If $B\in\mathcal{C}_{A}\cap U$, there are $2$ shortest geodesics
connecting $A$ and $B$. In fact, suppose that $3$ or more shortest
geodesics connect $A$ and $B$. $2$ of the geodesics, denoted as
$\gamma_{1},\gamma_{2}:[0,\mathbf{d}(A,B)]\to U$, will bound a simply-connected
bigon $\Omega_{A,B}\subset U$ which contains no conic points. Since
${\rm diam}(\Omega_{A,B})<\frac{\pi}{\sqrt{K_{M}}},$ and the Gaussian
curvature $\mathcal{K}<K_{M}$, we can apply Rauch comparison principle
in $\Omega_{A,B}$ to show $\mathbf{d}(\gamma_{1}(t),\gamma_{2}(t))>0$.
It contradicts to the fact that $\gamma_{i}(\mathbf{d}(A,B))=B$,
$i=1,2$.

We will prove the following fact: If $Z_{N}(\cdot,\cdot,t)$ achieves
local minimum at $(x,y)$ such that $X(x,t)$ is inside the cut locus
of $X(y,t)$ near a conic point, then $Z_{N}$ can be replaced by
some smooth functions which also achieve minimum at the same location.
\begin{lem}
\label{lem:cut-locusnot-in a-conic_neighborhood}Let $D_{r}(O)$ be
a disk neighborhood of a conic point $O$. Suppose that $Z_{N}(\cdot,\cdot,t)$
achieves local minimum at $\bar{x},\bar{y}$ and $A=X(\bar{x},t)$,
$B=X(\bar{y},t)$ are both in $D_{r}(O)\backslash\{O\}$. Then, $Z_{N}(\cdot,\cdot,t)$
is $C^{1}$ in a neighborhood of $(\bar{x},\bar{y})$. Moreover, if
$B\in\mathcal{C}_{A}$ , then there exist two smooth functions $Z_{N}^{i}(\cdot,\cdot,t)$,
$i=1,2$ such that $Z_{N}(x,y,t)=\min\{Z_{N}^{1}(x,y,t),Z_{N}^{2}(x,y,t)\}$
and $Z_{N}^{i}(\cdot,\cdot,t)$ both achieve local minimum at $(\bar{x},\bar{y})$.
\end{lem}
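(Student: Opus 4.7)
The plan is to separate into two cases according to whether $B \in \mathcal{C}_A$ or not, and in the cut-locus case to represent $Z_N$ as the minimum of two smooth functions coming from the two distinct shortest geodesics.

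\textbf{Case 1 (off the cut locus).} If $B \notin \mathcal{C}_A$, there is a unique minimizing geodesic from $A$ to $B$. By the discussion preceding the lemma, $\mathrm{diam}(D_r(O)) < \pi/\sqrt{K_M}$ guarantees no conjugate points along this geodesic. The implicit function theorem applied to the geodesic equation with fixed endpoints (in a smooth chart on $M\setminus\{O\}$ containing both $A$ and $B$, which exists by Proposition~\ref{lem:diffeo Lem 5.1}) gives a smooth family of minimizing geodesics between nearby pairs of endpoints, so the distance function $d(x,y,t)$, and therefore $Z_N$, is $C^\infty$ near $(\bar x,\bar y)$. Setting $Z_N^1=Z_N^2=Z_N$ finishes this case.

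\textbf{Case 2 (on the cut locus).} If $B \in \mathcal{C}_A$, the earlier analysis (using $\mathrm{diam}(U) < \pi/\sqrt{K_M}$ and the absence of conjugate points via Rauch) shows that there are exactly two minimizing geodesics $\gamma_1,\gamma_2$ from $A$ to $B$. For each $\gamma_i$, the same implicit function argument produces a smooth family $\gamma_i(x,y;\cdot)$ of geodesics joining $X(x,t)$ to $X(y,t)$ for $(x,y)$ in a neighborhood of $(\bar x,\bar y)$. Denote its length by $d_i(x,y,t)$; each $d_i$ is smooth. By continuity of the minimizing map and the fact that only two homotopy classes of short geodesics survive in a small neighborhood of $(\bar x,\bar y)$, we get
\[
d(x,y,t) = \min\bigl\{d_1(x,y,t),\, d_2(x,y,t)\bigr\}
\]
locally. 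Defining
\[
Z_N^{i}(x,y,t) := N\,d_i(x,y,t) - \frac{L(t)}{\pi}\sin\!\left(\frac{\pi\, l(x,y,t)}{L(t)}\right)e^{-Kt},
\]
each $Z_N^{i}$ is smooth in a neighborhood of $(\bar x,\bar y)$ and $Z_N = \min\{Z_N^{1},Z_N^{2}\}$ there.

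\textbf{Inheritance of the minimum and $C^1$ regularity.} Since $Z_N^{i}(\bar x,\bar y,t) = Z_N(\bar x,\bar y,t)$ and $Z_N \le Z_N^{i}$ pointwise near $(\bar x,\bar y)$, any $(x,y)$ in that neighborhood satisfies
\[
Z_N^{i}(x,y,t) \;\ge\; Z_N(x,y,t) \;\ge\; Z_N(\bar x,\bar y,t) \;=\; Z_N^{i}(\bar x,\bar y,t),
\]
so each $Z_N^{i}$ also attains a local minimum at $(\bar x,\bar y)$, forcing $\nabla Z_N^{1}(\bar x,\bar y,t) = \nabla Z_N^{2}(\bar x,\bar y,t) = 0$. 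Because both smooth branches have matching value and vanishing gradient at $(\bar x,\bar y)$, their minimum is $C^1$ at that point with $\nabla Z_N(\bar x,\bar y,t)=0$, which is the sense in which the $C^1$ assertion is used in the maximum-principle argument of Section~\ref{sec:Comparison-function-with}.

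\textbf{Main obstacle.} The essential technical point is the smooth dependence of each $d_i$ on its endpoints near the cut locus: I need that small perturbations of $A$ and $B$ still admit exactly one shortest geodesic in the homotopy class of $\gamma_i$ whose length is $C^\infty$ in $(x,y,t)$. This requires both the absence of conjugate points along $\gamma_i$ (guaranteed by Rauch together with the diameter bound on $U$) and the control afforded by Proposition~\ref{lem:diffeo Lem 5.1}, which ensures the exponential map is a local diffeomorphism at regular points of $M$; this is what keeps $A$, $B$ inside a well-behaved coordinate system even though they lie in a conic neighborhood $D_r(O)$.
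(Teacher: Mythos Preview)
Your proposal is correct and follows essentially the same route as the paper: represent the distance near the cut locus as the minimum of the two smooth branch lengths $d_1,d_2$, define $Z_N^i$ accordingly, and then argue that a local minimum of $\min\{Z_N^1,Z_N^2\}$ forces each branch to have a local minimum (hence vanishing gradient) at $(\bar x,\bar y)$. Your inequality chain $Z_N^i \ge Z_N \ge Z_N(\bar x,\bar y)=Z_N^i(\bar x,\bar y)$ is a slightly cleaner alternative to the paper's contradiction argument, and your explicit appeal to the absence of conjugate points and the implicit function theorem makes the smoothness of the $d_i$ more transparent than the paper's treatment; but the substance is the same.
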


\begin{proof}
Let $f_{1}(y)$, $f_{2}(y)$ be two smooth functions, and $f(y):=\min\{f_{1}(y),f_{2}(y)\}$.
Then, $f(y)$ is smooth away from the set $\mathcal{C}=\{y:f_{1}(y)=f_{2}(y)\}$.
If $f(y)$ achieves a local minimum at $y_{0}\in\mathcal{C}$ then
we claim that $y_{0}$ is a local minimum of both $f_{1}$ and $f_{2}$.
Otherwise, we may assume that there exists a sequence $y_{i}\to y_{0}$,
as $i\to\infty$ such that $f_{1}(y_{i})<f_{1}(y_{0})$. Then 
\[
f(y_{i})\leq f_{1}(y_{i})<f_{1}(y_{0})=f(y_{0}).
\]
This contradicts to the fact that $f$ achieves a local minimum at
$y_{0}$. So we have proved the claim. Moreover, we see that $f_{1}'(y)=f_{2}'(y)=0$.

We assume that $B\in\mathcal{C}_{A}$. Otherwise, the conclusion is
obvious. Let $\mathbf{d}(\cdot,\cdot)$ be the distance function.
Since $B\in\mathcal{C}_{A}$, there exist two geodesics $\gamma_{1},\gamma_{2}$
connecting $A,B$. We denote $d_{1}(A,B)={\rm Length}(\gamma_{1})$
and $d_{2}(A,B)={\rm Length}(\gamma_{2})$. We can extend each $\gamma_{i}$
to be a family of geodesics connecting $A$ and any point $Q$ near
$B$. Thus, we can extend $d_{1}(A,Q)$ and $d_{2}(A,Q)$ smoothly
for $Q$ in a neighborhood of $B$ such that
\begin{figure}
\includegraphics[width=5cm]{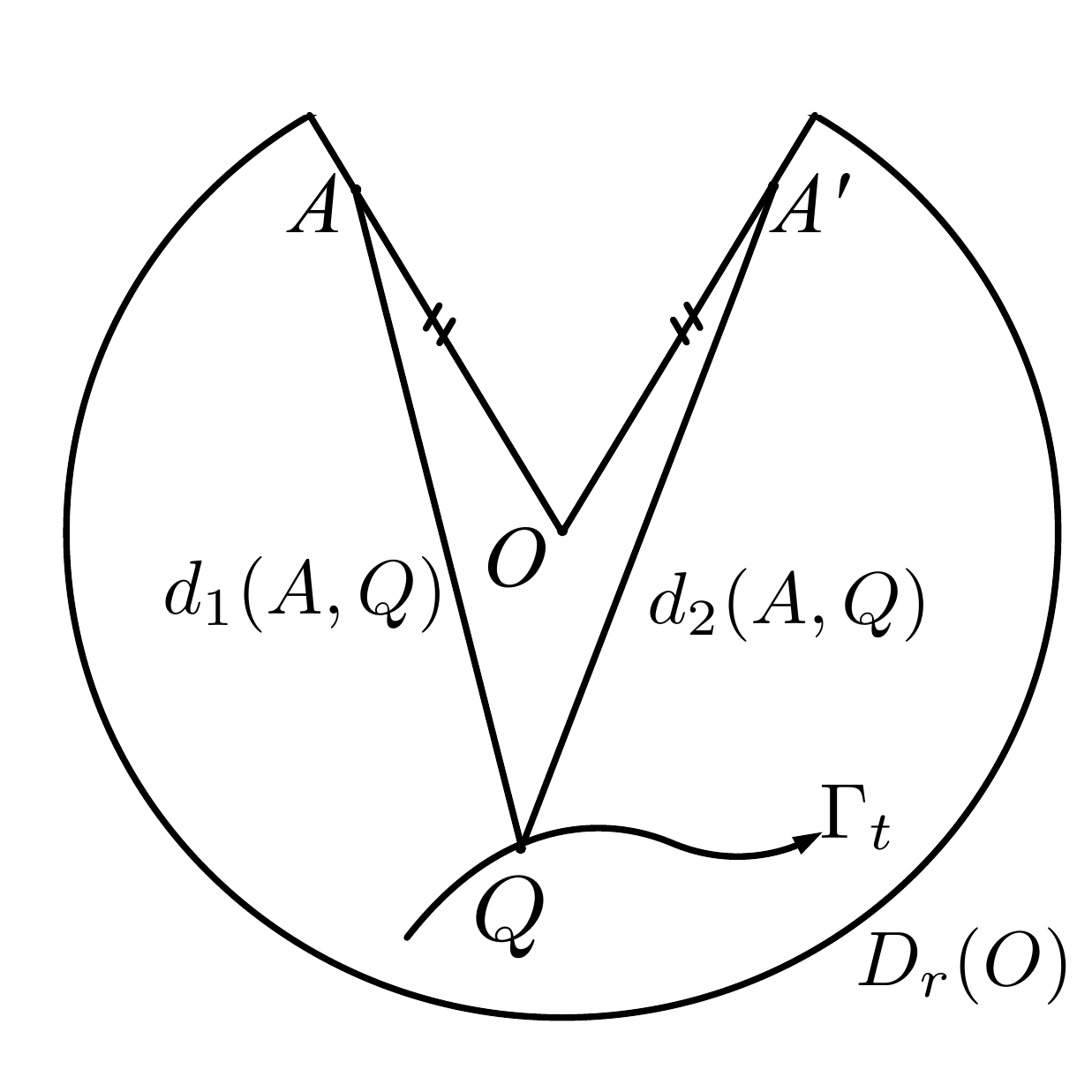}\caption{Distance function ($A,A'$ are the same point)}
\label{figure,Distancefunction}
\end{figure}
\[
\mathbf{d}(A,Q)=\min\{d_{1}(A,Q),d_{2}(A,Q)\}.
\]
See Figure \ref{figure,Distancefunction}. For $y$ in a neighborhood
of $\bar{y}$, we define
\[
Z_{N}^{i}(\bar{x},y,t):=Nd_{i}(A,X(y,t))+\frac{L(t)}{\pi}\sin\left(\frac{\pi l(x,y,t)}{L(t)}\right)e^{-Kt}.
\]
Then, each $Z_{N}^{i}(\bar{x},\cdot,t)$ is a function smoothly defined
in a neighborhood of $\bar{y}$ and $Z_{N}(\bar{x},y,t)=\min\{Z_{N}^{1},Z_{N}^{2}\}$.
Now, $B\in\mathcal{C}_{A}$ implies that $d_{1}(A,B)=d_{2}(A,B)$,
and $Z_{N}^{1}(\bar{x},\bar{y},t)=Z_{N}^{2}(\bar{x},\bar{y},t)$.
By the previous argument, we see that if $Z_{N}(\bar{x},\cdot,t)$
achieves local minimum at $\bar{y}$, then $Z_{N}^{i}(\bar{x},\cdot,t)$
also achieves local minimum at $\bar{y}$ and $\left(\frac{\partial Z_{N}}{\partial y}\right)(\bar{x},\bar{y},t)=0$
exists. By symmetry, $A\in\mathcal{C}_{B}$ and we can apply the same
argument for $Z_{N}(\cdot,\bar{y},t)$. Thus, we have finished the
proof.
\end{proof}

\begin{proof}
[Proof of Theorem \ref{thm:conic case R(t) bounded}] Lift $X(x,t)$
to be a CSF $\tilde{X}(x,t)$ on a double branched cover $\tilde{M}$
given in Proposition \ref{prop:construction of the branched double cover}.
Let $d_{0}=\min\{d_{M},\delta_{0}\}$, where $d_{M}$ is defined in
Corollary \ref{lem:diffeo Lem 5.1} and $\delta_{0}$ is defined in
Lemma \ref{lem:Jacobian lemma}. We pick $N>\max\{\frac{\tilde{L}(0)}{\pi d_{0}},R(0)\}$.
If $Z_{N}(x,y,\bar{t})$ allows local variation at $\bar{x},\bar{y}$
, then the same argument in Theorem \ref{thm:R(t) is boundeddd} can
be applied. We require $L(t)>c>0$ to avoid subcase 2b and use (\ref{eq:R(t) bounded case 2-1}),
where the condition (\ref{eq:K condition case 2}) on $K$ is replaced
by $K>4C_{1}+\frac{4\pi^{2}}{c^{2}}$. If $X(\bar{x},\bar{t})$ is
in the cut locus of $X(\bar{y},\bar{t})$, then we may assume that
$X(\bar{x},\bar{t})$ and $X(\bar{y},\bar{t})$ is in a neighborhood
of a conic singularity $O$ by the choice of $N$. Then, by Lemma
\ref{lem:cut-locusnot-in a-conic_neighborhood}, 
\[
Z_{N}(x,y,\bar{t})=\min\{Z_{N}^{1}(x,y,\bar{t}),Z_{N}^{2}(x,y,\bar{t})\},
\]
for $(x,y)$ in a neighborhood of $(\bar{x},\bar{y})$, $Z_{N}^{1}(\bar{x},\bar{y},\bar{t})=Z_{N}^{2}(\bar{x},\bar{y},\bar{t})$,
and both $Z_{N}^{i}(\cdot,\cdot,\bar{t})$ achieve local minimum at
$(\bar{x},\bar{y}$). Then, we can apply the same arguments in Theorem
\ref{thm:R(t) is boundeddd} to either $Z_{N}^{1}(x,y,t)$ or $Z_{N}^{2}(x,y,t)$
which also yields contradictions.
\end{proof}
\appendix

\section{Coordinates and geodesics on conic surfaces }

In this appendix, we give the proof of the results in Proposition
\ref{lem:diffeo Lem 5.1}. We first prove the existence of an exponential
map at each conic point. In \cite{troyanov1990coordonnees}, Troyanov
shows the existence of a geodesic polar coordinate near a conic point
assuming a Gaussian curvature bound. Since we assume $h\in C^{\infty}(M-\{p_{i}\})\cap C^{2,\alpha}(M,D)$,
the regularity of the polar coordinate is much easier to achieve.
\begin{lem}
(\cite{troyanov1990coordonnees}). \label{lem:().local diff}Let $U$
be a neighborhood of conic point $p$ such that the divisor $D|_{U}=\beta p$,
and the metric $g|_{U}$ is in the form of (\ref{eq:conic metric def})
for some $h\in C^{2,\alpha}(M,D)$. The following holds. 
\begin{enumerate}
\item There is a $\rho_{0}>0$, an open neighborhood $V\subset U$ of $p$,
and a map $\varphi:[0,\rho_{0})\times S^{1}\to V$ such that such
that $\varphi(0,\theta)=p$, and $\varphi|_{(0,\rho_{0})\times S^{1}}\to V-\{p\}$
is smoothly diffeomorphic.
\item The pull-back metric 
\begin{equation}
\varphi^{*}g=d\rho^{2}+\phi^{2}(\rho,\theta)d\theta^{2},\label{eq:polar coordinate}
\end{equation}
where $\phi^{2}\in C^{2,\alpha}(D_{\rho_{0}}(p),\beta p)$, and $\lim_{\rho\to0}\frac{\phi}{\rho}=1+\beta$.
\end{enumerate}
We call $\varphi:[0,\rho_{0})\times S^{1}$ a geodesic polar coordinate
near $p$.

\end{lem}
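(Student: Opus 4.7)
The plan is to construct $\varphi$ as a perturbation of the explicit polar coordinate on the flat cone $C_{2\pi(1+\beta)}$, then verify the regularity via Gauss's lemma together with the weighted H\"older framework of Definition~\ref{def:(Weighted-Hlder-space).}.

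First I would handle the flat model. For $g_{0}=|z|^{2\beta}|dz|^{2}$, writing $z=re^{i\theta}$ and setting $\rho:=\int_{0}^{r}s^{\beta}\,ds=\frac{r^{1+\beta}}{1+\beta}$ gives, by direct computation, $g_{0}=d\rho^{2}+(1+\beta)^{2}\rho^{2}\,d\theta^{2}$. Thus the map $(\rho,\theta)\mapsto\bigl((1+\beta)\rho\bigr)^{\frac{1}{1+\beta}}e^{i\theta}$ is the desired polar diffeomorphism in the flat case, with $\phi_{0}(\rho,\theta)=(1+\beta)\rho$. After normalizing $z$ by a constant (absorbable into $h$) we may assume $h(0)=0$, so that $g=e^{2h(z)}g_{0}$ is a first-order perturbation of $g_{0}$ with conformal factor controlled in $C^{2,\alpha}(U,\beta p)$.

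Next I would solve the geodesic equation from $p$. In $(\rho,\theta)$ coordinates, the geodesic ODE for $g$ is a $C^{2,\alpha}(U,\beta p)$-perturbation of the flat cone geodesic equation. By Definition~\ref{def:(Weighted-Hlder-space).}, the dyadic rescalings $F_{m}(s,\theta)=h\bigl((1+\beta)^{\frac{1}{1+\beta}}(2^{-m}s)^{\frac{1}{1+\beta}}e^{i\theta}\bigr)$ are uniformly $C^{2,\alpha}$ on the fixed annulus $\{2^{-1}\le s\le 2\}\times S^{1}$; the same holds for the rescaled geodesic ODE. Uniform smooth ODE estimates on each annulus, together with continuous dependence on the initial angle $\theta_{0}$, produce for each $\theta_{0}\in S^{1}$ a unique radial geodesic $\gamma_{\theta_{0}}:[0,\rho_{0})\to M$ with $\gamma_{\theta_{0}}(0)=p$ and prescribed initial direction $\theta_{0}$, depending continuously on $\theta_{0}$ and with the correct asymptotic behavior as $\rho\to 0$.

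Having these geodesics, I would define $\varphi(\rho,\theta):=\gamma_{\theta}(\rho)$ and read off the metric. Smoothness of $\varphi$ on $(0,\rho_{0})\times S^{1}$ is immediate from smoothness of $g$ away from $p$, and injectivity onto $V\setminus\{p\}$ for $\rho_{0}$ small follows by a Rauch-type comparison with the flat cone so that no conjugate points occur. Gauss's lemma on the smooth region $V\setminus\{p\}$ forces $\varphi^{*}g$ to take the form $d\rho^{2}+\phi^{2}(\rho,\theta)d\theta^{2}$. The weighted H\"older regularity $\phi^{2}\in C^{2,\alpha}(D_{\rho_{0}}(p),\beta p)$ follows from the uniform weighted H\"older bounds on the geodesic flow, while the limit $\lim_{\rho\to 0}\phi/\rho=1+\beta$ is extracted by comparing the Jacobi equation for $\partial_{\theta}\varphi$ with the flat cone Jacobi equation: as $\rho\to 0$ the conformal factor $e^{2h(z)}\to 1$, and the $\theta$-Jacobi field is asymptotic to $(1+\beta)\rho$, matching $\phi_{0}$.

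The main obstacle is the singular nature of the geodesic ODE at $\rho=0$: standard ODE existence does not apply directly at the conic tip. The weighted H\"older space is precisely the tool that resolves this; the dyadic rescaling turns the singular initial value problem into a uniform family of smooth ODE problems on a fixed annulus, and a limit argument then recovers both the existence of the radial geodesics at $\rho=0$ and the claimed regularity of $\phi^{2}$ in the weighted H\"older class.
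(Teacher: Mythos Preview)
Your outline is sound but takes a genuinely different route from the paper. You propose to construct $\varphi$ as the $g$-exponential map at $p$: solve the singular geodesic ODE from the cone tip as a perturbation of the flat cone, control it via the dyadic rescalings of Definition~\ref{def:(Weighted-Hlder-space).}, and then invoke Gauss's lemma to kill the $d\rho\,d\theta$ cross term. The paper bypasses the singular ODE entirely. It fixes a \emph{smooth} background metric $g_{0}$ on $M$, takes its ordinary geodesic polar coordinate $(r,\theta)$ at $p$ so that $g_{0}=dr^{2}+\psi^{2}d\theta^{2}$, writes the conic metric as $g=r^{2\beta}e^{2h}(dr^{2}+\psi^{2}d\theta^{2})$, and then simply reparametrizes radially by $\rho=\int_{0}^{r}s^{\beta}e^{h(s,\theta)}\,ds$ with $\varphi(\rho,\theta)=(r,\theta)$. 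One then reads off $\phi=r^{\beta}e^{h}\psi$, obtains $\lim_{\rho\to 0}\phi/\rho=1+\beta$ by L'H\^opital (using $\partial_{r}\psi\to 1$ and $\psi\,\partial_{r}h\to 0$ from the weighted H\"older hypothesis), and gets $\phi^{2}\in C^{2,\alpha}(U,\beta p)$ since $r^{2\beta}\psi^{2}$ is smooth in the weighted sense and $e^{2h}\in C^{2,\alpha}(U,\beta p)$.

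What the paper's route buys is economy: one explicit change of variable, no singular initial value problem, no Jacobi field comparison. What your route buys is geometric honesty: your radial curves are actual $g$-geodesics, so Gauss's lemma applies directly and the metric is exactly $d\rho^{2}+\phi^{2}d\theta^{2}$. In the paper's construction the radial curves are $g_{0}$-geodesics rather than $g$-geodesics (unless $\partial_{\theta}h\equiv 0$), so a cross term $d\rho\,d\theta$ is not obviously absent; your approach faces the ``main obstacle'' you identify head-on, while the paper's approach sidesteps it at the price of being slightly looser about the orthogonality in \eqref{eq:polar coordinate}.
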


\begin{proof}
We provide a proof for readers' convenience. Suppose that $g_{0}$
is a smooth background metric on $M$. Let $(r,\theta)$ be a geodesic
polar coordinate on a geodesic disk $D_{r_{0}}^{g_{0}}(p)\subset(M,g_{0})$,
such that $r(p)=0$, and $g_{0}=dr^{2}+\psi(r,\theta)^{2}d\theta^{2}$.
Since $g_{0}$ is smooth, $\lim_{r\to0}\psi/r=\lim_{r\to0}\partial_{r}\psi=1.$
We may write $g=r^{2\beta}e^{2h(r,\theta)}(dr^{2}+\psi(r,\theta)^{2}d\theta^{2}).$
Let $\rho(r)=\int_{0}^{r}s^{\beta}e^{h(s,\theta)}ds$. We obtain a
function $\rho$ in $D_{r_{0}}^{g_{0}}(p)$ strictly increasing in
$r$and smooth in $(r,\theta)$ for $r>0$.  We define
\[
\varphi(\rho,\theta)=(r,\theta).
\]
Note, $h$ is continuous and $\rho_{0}=\inf_{\partial D_{r_{0}}^{g_{0}}}\rho>0$.
Let $V=\varphi([0,\rho_{0})\times S^{1})$. Then, $\varphi(0,\theta)=p$
and $\varphi$ is a smooth diffeomorphic map from $(0,\rho_{0})\times S^{1}$
to $V-\{p\}$. Notice that $\varphi^{*}g=d\rho^{2}+\phi(\rho,\theta)^{2}d\theta^{2},$
for $\phi(\rho,\theta)=r^{\beta}e^{h}\psi(r,\theta).$
\begin{align*}
\lim_{\rho\to0}\phi/\rho & =\lim_{\rho\to0}\frac{\partial\phi}{\d r}/\frac{\d\rho}{\d r}=\beta+\lim_{r\to0}\left(\psi\frac{\partial h}{\d r}+\frac{\partial\psi}{\d r}\right)\\
 & =\beta+1.
\end{align*}
Here $\lim_{r\to0}\psi\frac{\partial h}{\d r}=0$ since $h\in C^{2,\alpha}(M,D)$.
Moreover, function $\psi(r,\theta)^{2}r^{2\beta}$ belongs to $C^{\infty}(U,\beta p)$.
Hence, 
\[
\phi^{2}(\rho,\theta)=e^{2h}\psi^{2}r^{2\beta}\in C^{2,\alpha}(U,\beta p).
\]
We have proved (1), (2).
\end{proof}
The next Lemma shows that the tangent cone at a conic point $p$ is
uniquely determined and isometric to a flat cone. A tangent cone at
$p$ is a pointed-Gromov-Hausdorff limiting space of $(M,\lambda_{i}^{-2}g,p)$
for a positive sequence $\lambda_{i}\to0$. See \cite{cheeger1997structure}.
Then, the exponential map at $p$ is well defined once we replace
the tangent plane at $p$ by the tangent cone space at $p$.
\begin{prop}
Assume the condition in Lemma \ref{lem:().local diff} and suppose
that $\varphi:[0,\rho_{0})\times S^{1}\to V$ gives a polar coordinate
near $p$. Let $T_{p}M$ be the tangent cone at the conic point $p$.
The following statements hold. \label{lem:ttangent cone}
\begin{enumerate}
\item The tangent cone $T_{p}M$ at $p$ is isometric to the flat cone $C_{(2+2\beta)\pi}$
with cone angle $(2+2\beta)\pi$. 
\item The exponential map $\exp_{p}(v)$ from the tangent cone to $V$ is
well-defined and diffeomorphic in a neighborhood of the cone tip..
\end{enumerate}
\end{prop}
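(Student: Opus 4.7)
The plan is to derive both parts from the polar coordinate expression $\varphi^{*}g = d\rho^{2} + \phi^{2}(\rho,\theta)\,d\theta^{2}$ obtained in Lemma~\ref{lem:().local diff}, using the key asymptotic $\phi(\rho,\theta)/\rho \to 1+\beta$ as $\rho \to 0$, which holds uniformly in $\theta$ because $\phi^{2}\in C^{2,\alpha}(D_{\rho_{0}}(p),\beta p)$.

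For part (1), I would rescale the metric directly in the polar chart. Given $\lambda>0$, introduce the rescaled radial variable $\tilde{\rho}=\rho/\lambda$ on the punctured disk $(0,\rho_{0})\times S^{1}$. Then
\[
\lambda^{-2}\varphi^{*}g \;=\; d\tilde{\rho}^{2} + \bigl(\lambda^{-1}\phi(\lambda\tilde{\rho},\theta)\bigr)^{2}\,d\theta^{2}.
\]
Writing $\phi(\rho,\theta) = (1+\beta)\rho\bigl(1+\epsilon(\rho,\theta)\bigr)$ with $\epsilon(\rho,\theta)\to 0$ uniformly in $\theta$ as $\rho\to 0$ (which follows from the weighted H\"older regularity of $\phi^{2}$), we get $\lambda^{-1}\phi(\lambda\tilde{\rho},\theta)\to(1+\beta)\tilde{\rho}$ uniformly on compact subsets of $\tilde{\rho}$. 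Hence the rescaled metrics converge smoothly (away from the tip) to $d\tilde{\rho}^{2}+(1+\beta)^{2}\tilde{\rho}^{2}\,d\theta^{2}$. Setting $\varphi_{\text{cone}}=(1+\beta)\theta\in[0,2\pi(1+\beta))$ identifies this limit with the standard polar form $d\tilde{\rho}^{2}+\tilde{\rho}^{2}\,d\varphi_{\text{cone}}^{2}$ of the flat cone $C_{(2+2\beta)\pi}$. Smooth convergence off the tip, together with the uniform control $\phi\le C\rho$ on the lengths of small coordinate circles (so that distances in $(M,\lambda^{-2}g,p)$ do not degenerate), gives pointed Gromov-Hausdorff convergence to the flat cone, proving uniqueness and identification of the tangent cone.

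For part (2), I would simply declare
\[
\exp_{p}:[0,\rho_{0})\times S^{1}\big/\bigl(\{0\}\times S^{1}\bigr)\longrightarrow V,\qquad (\rho,\theta)\mapsto \varphi(\rho,\theta),
\]
where the domain is identified with a neighborhood of the cone tip in $T_{p}M = C_{(2+2\beta)\pi}$ via polar coordinates (with the angular rescaling $\varphi_{\text{cone}}=(1+\beta)\theta$ as above). The radial curves $\rho\mapsto \varphi(\rho,\theta_{0})$ are unit-speed since $\langle\partial_{\rho},\partial_{\rho}\rangle_{\varphi^{*}g}=1$; the Euler-Lagrange equations for the length functional of $d\rho^{2}+\phi^{2}d\theta^{2}$ reduce, on curves of constant $\theta$, to $\ddot{\rho}=0$, so these are geodesics emanating from $p$. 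By Lemma~\ref{lem:().local diff}~(1), this map is a smooth diffeomorphism on $(0,\rho_{0})\times S^{1}\to V\setminus\{p\}$, and it continuously extends to send the identified tip to $p$ bijectively, giving a homeomorphism of a neighborhood of the tip onto $V$ and a diffeomorphism off the tip.

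The main subtle point, and the one where care is required, is matching the angular parametrization of the abstract tangent cone $C_{(2+2\beta)\pi}$ with the coordinate angle $\theta\in S^{1}$ of the chart $\varphi$; here the constant $1+\beta$ is exactly the factor forced both by $\lim_{\rho\to 0}\phi/\rho = 1+\beta$ and by the rescaling argument in part (1), so the two identifications are compatible. Everything else is a direct consequence of the smoothness and bijectivity already provided by Lemma~\ref{lem:().local diff}.
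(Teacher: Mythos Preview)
Your proposal is correct and follows essentially the same route as the paper: rescale in the polar chart, use $\phi/\rho\to 1+\beta$ to identify the limit as the flat cone, and define $\exp_{p}$ via the radial curves of $\varphi$. The only minor difference is that for part~(2) the paper invokes Troyanov's Lemma~1.4 to obtain uniqueness and the distance-minimizing property of the radial geodesics, whereas you verify directly from the geodesic equations that the radial curves are geodesics; both suffice for the statement as written.
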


\begin{proof}
For a positive sequence $\lambda_{i}\to0$, consider the pointed-Gromov-Hausdorff
limiting space of $(M,\lambda_{i}^{-2}g,p)$. In the polar coordinate,
for any $(\rho,\theta)\in(0,\infty)\times S^{1}$, let $\varphi_{i}(\rho,\theta):=\varphi(\lambda_{i}\rho,\theta)$.
If $\lambda_{i}<\frac{\rho_{0}}{\rho}$, 
\[
\varphi_{i}(\rho,\theta)=\varphi(\lambda_{i}\rho,\theta)\in V-\{p\}.
\]
Let $\phi_{i}(\rho)=\lambda_{i}^{-1}\phi(\lambda_{i}\rho).$ Then,
\[
\lim_{i\to\infty}\phi_{i}(\rho)=\rho\lim_{i\to\infty}\frac{\phi(\lambda_{i}\rho,\theta)}{\lambda_{i}\rho}=(1+\beta)\rho,
\]
by Lemma \ref{lem:().local diff} (2). Denote $\xi=\left((1+\beta)\rho\right)^{\frac{1}{1+\beta}}e^{\sqrt{-1}\theta}$.
\begin{align}
\lambda_{i}^{-2}\varphi_{i}^{*}g & \to d\rho^{2}+(1+\beta)^{2}\rho^{2}d\theta^{2}\label{eq:limiting space tangent cone}\\
 & =|\xi|^{2\beta}|d\xi|^{2}.\nonumber 
\end{align}
Hence, the tangent cone $T_{p}M$ at $p$ is isometric to the flat
cone $C_{\left(2+2\beta\right)\pi}$ defined by (\ref{eq:flat cone}).
The limit in (\ref{eq:limiting space tangent cone}) is independent
of the sequence $\lambda_{i}$ which implies the uniqueness of the
tangent cone. We have proved (1).

Let $v\in T_{p}M$. We may identify $v$ with $\left((1+\beta)\rho_{v}\right)^{\frac{1}{1+\beta}}e^{\sqrt{-1}\theta_{v}}$
in $\mathbb{C}$. By Lemma 1.4 in \cite{troyanov1990coordonnees},
there is a neighborhood $U_{1}$ of $p$ such that for any $q\in U_{1}$,
there is a unique geodesic $\gamma_{p,q}$ connecting $p$ and $q$.
Moreover, if $q=\varphi(\rho_{1},\theta_{1})$, then $\gamma_{p,q}=\{\varphi(t\rho_{1},\theta_{1}):t\in[0,1]\}$,
and the distance between $p,q$ is $\mathbf{d}(p,q)=\rho_{1}$. Thus,
if $\rho_{v}$ is small, the exponential map $\exp_{p}(v):=\varphi(\rho_{v},\theta_{v})$
is well defined and corresponds to the shortest geodesic of length
$\rho_{v}$ starting at $p$ in the direction of $v$. $\exp_{p}$
is a local diffeomorphism near the cone tip by Lemma \ref{lem:().local diff}
(1). 
\end{proof}
\begin{rem}
Let $\xi_{1},\xi_{2}\in T_{p}M$. Since $T_{p}M$ is isometric to
$C_{\left(2+2\beta\right)\pi}$, we may identify $\xi_{1}=\left((1+\beta)\rho_{1}\right)^{\frac{1}{1+\beta}}e^{\sqrt{-1}\theta_{1}}$,
and $\xi_{2}=\left((1+\beta)\rho_{2}\right)^{\frac{1}{1+\beta}}e^{\sqrt{-1}\theta_{2}}$,
where $0\leq\theta_{1},\theta_{2}<2\pi$. The angle between $\xi_{1}$
and $\xi_{2}$ is $\delta\theta=(1+\beta)\min\{|\theta_{2}-\theta_{1}|,|2\pi+\theta_{1}-\theta_{2}|\}$.
Then, the distance between $\xi_{1}$ and $\xi_{2}$ is given by 
\begin{equation}
\mathbf{d}_{T_{p}M}(\xi_{1},\xi_{2})^{2}=\rho_{1}^{2}+\rho_{2}^{2}-2\rho_{1}\rho_{2}\cos\left(\delta\theta\right).\label{eq: distance on flat cone}
\end{equation}
\end{rem}

Next, we show the existence of shortest geodesics for any pairs of
points on $M$.
\begin{prop}
Let $q_{1},q_{2}$ be $2$ points on $M$. There is a shortest smooth
geodesic realizing the distance between $q_{1}$ and $q_{2}$.\label{lem: geodesic for regular points}
\end{prop}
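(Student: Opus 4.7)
The plan is to produce a shortest path by compactness and then establish its regularity in two stages: first at regular points of $M$, and then by ruling out that it touches any conic singularity.

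First, I would argue existence of a minimizer. Since $M$ is closed, distances are finite, so I take a minimizing sequence of rectifiable curves $\gamma_{n}:[0,1]\to M$ joining $q_{1}$ and $q_{2}$, reparametrized proportionally to $g$-arc length. The uniform bound on $L(\gamma_{n})$ makes the family equicontinuous in the $C^{0}$ topology (measured with an auxiliary smooth background metric that dominates $g$), so by Arzel\`a--Ascoli and the compactness of $M$, after passing to a subsequence $\gamma_{n}\to\gamma$ uniformly to some Lipschitz curve joining $q_{1}$ and $q_{2}$. Lower semicontinuity of the $g$-length functional on such curves (Fatou applied to the speed functions) yields $L(\gamma)\le\lim L(\gamma_{n})=\mathbf{d}(q_{1},q_{2})$, hence equality.

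Next I would promote $\gamma$ to a smooth geodesic at every parameter $t_{0}$ with $\gamma(t_{0})\notin\{p_{i}\}$. At such $t_{0}$, Proposition \ref{lem:diffeo Lem 5.1}(2) provides a normal-coordinate disk on which $g$ is a smooth Riemannian metric, and the classical argument (minimality of the subarc inside this disk forces the geodesic equation) yields the smooth geodesic structure on $\gamma$ in a neighborhood of $t_{0}$.

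The hard step is showing $\gamma$ cannot pass through any conic point. Suppose by contradiction that $\gamma(t_{0})=p$ with cone angle $\theta_{p}=2\pi(1+\beta)$ and $\beta\in(-1,0)$. For $\epsilon>0$ small, $\gamma([t_{0}-\epsilon,t_{0}+\epsilon])$ lies inside a polar-coordinate chart from Lemma \ref{lem:().local diff} with $\gamma(t_{0}\pm\epsilon)=\varphi(\rho_{\pm},\theta_{\pm})\ne p$. In the tangent cone $T_{p}M\cong C_{(2+2\beta)\pi}$ the corresponding angular separation satisfies $\delta\theta\le(1+\beta)\pi<\pi$, so formula (\ref{eq: distance on flat cone}) gives
\[
\mathbf{d}_{T_{p}M}\bigl((\rho_{+},\theta_{+}),(\rho_{-},\theta_{-})\bigr)=\sqrt{\rho_{+}^{2}+\rho_{-}^{2}-2\rho_{+}\rho_{-}\cos(\delta\theta)}<\rho_{+}+\rho_{-},
\]
with the gap $\rho_{+}+\rho_{-}-\sqrt{\rho_{+}^{2}+\rho_{-}^{2}-2\rho_{+}\rho_{-}\cos(\delta\theta)}$ of order $\rho_{+}+\rho_{-}$ itself. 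To transfer this back to $M$, I use that in polar coordinates $g=d\rho^{2}+\phi(\rho,\theta)^{2}d\theta^{2}$ with $\phi(\rho,\theta)/((1+\beta)\rho)\to 1$ uniformly in $\theta$ as $\rho\to 0$ by Lemma \ref{lem:().local diff}(2); thus the push-forward into $M$ of the flat-cone geodesic from $(\rho_{+},\theta_{+})$ to $(\rho_{-},\theta_{-})$ has $g$-length equal to its $T_{p}M$-length up to an $o(\rho_{+}+\rho_{-})$ error. Choosing $\epsilon$ small enough, the error is dominated by the linear-order gap, so this competitor path, which avoids $p$, is strictly shorter than the subarc $\gamma|_{[t_{0}-\epsilon,t_{0}+\epsilon]}$ of length $\rho_{+}+\rho_{-}$, contradicting $L(\gamma)=\mathbf{d}(q_{1},q_{2})$. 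Hence $\gamma$ is smooth throughout, and the proposition follows. The main obstacle is making the tangent-cone approximation in this final step quantitative enough to conclude the strict shortening in $M$ rather than only in $T_{p}M$.
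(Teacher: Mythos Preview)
Your outline is correct and follows the same architecture as the paper's proof: existence of a minimizer by compactness, local smoothness at regular points, and a tangent-cone comparison to rule out passage through a conic point using $\delta\theta\le(1+\beta)\pi<\pi$. Two technical points differ, and the second one addresses exactly the obstacle you flag at the end.

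For existence, the paper simply invokes the metric-space Hopf--Rinow theorem (Burago--Burago--Ivanov) for the complete, locally compact length space $(M,g)$, rather than running Arzel\`a--Ascoli plus lower semicontinuity by hand; both are fine. For the contradiction at a conic point, the paper first uses Troyanov's result that in the polar chart every geodesic emanating from $p$ is a radial ray $\varphi(\cdot,\theta)$; this is what justifies your implicit claim that the subarc through $p$ has length exactly $\rho_{+}+\rho_{-}$, and you should state it. Then, instead of constructing a competitor curve and estimating its $g$-length, the paper compares \emph{distances} directly via the scaling limit: placing the two endpoints at equal radii $\lambda_{i}\tau$ along the fixed rays $\theta_{1},\theta_{2}$, minimality forces $\mathbf d_M(\varphi(\lambda_i\tau,\theta_1),\varphi(\lambda_i\tau,\theta_2))=2\lambda_i\tau$, while pointed convergence of $(M,\lambda_i^{-2}g,p)$ to the flat cone gives $\lambda_i^{-2}\mathbf d_M^2\to 2\tau^2(1-\cos\delta\theta)<4\tau^2$, an immediate contradiction. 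This sidesteps entirely the ``$o(\rho_++\rho_-)$ vs.\ linear gap'' bookkeeping you identified as delicate; in your version one must also take care that $\rho_+$ and $\rho_-$ are comparable (which the radial-ray structure guarantees once you parametrize by arclength), since otherwise the gap can be of order $\min(\rho_+,\rho_-)$ rather than $\rho_++\rho_-$.
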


\begin{proof}
$(M,g,D)$ is a locally compact length space. It is also complete
as a metric space. By Hopf-Rinow Theorem (\cite{burago2001course},
Section 2.5), the distance $l=\mathbf{d}(q_{1},q_{2})$ is realized
by a piecewise geodesic $\gamma:[0,l]\to M$. The differentiable parts
of $\gamma$ in the regular part of $M$ have to be smooth geodesics.
If the interior of $\gamma$ does not contain any conic points, $\gamma$
is a smooth geodesic and we are done. If not, we argue by contradiction. 

Let $p=\gamma(t_{1})$ be a conic point for some $t_{1}\in(0,l)$.
Let $V$ be a neighborhood of $p$ in Lemma \ref{lem:().local diff}
and $D|_{V}=\beta p$. Then, $\gamma\cap V$ contains at least $2$
geodesic segments connecting $p$. By Proposition \ref{lem:ttangent cone},
we may assume that for some $0\leq\theta_{1},\theta_{2}<2\pi$,
\[
\gamma(t)=\begin{cases}
\varphi(t_{1}-t,\theta_{1}), & t\in[t_{0},t_{1}),\\
\varphi(t-t_{1},\theta_{2}), & t\in[t_{1},t_{2}).
\end{cases}
\]
Let $\tau=\min\{t_{1}-t_{0},t_{2}-t_{1}\}/2$, $\delta\theta=(1+\beta)\min\{|\theta_{2}-\theta_{1}|,|2\pi+\theta_{1}-\theta_{2}|\}$.
Note $\delta\theta<\pi$, since $0<1+\beta<1$. Let $\{\lambda_{i}\}$
be a positive sequence which goes to $0$. By the minimizing property
of $\gamma$, 
\begin{equation}
\mathbf{d}(\varphi(\lambda_{i}\tau,\theta_{1}),\varphi(\lambda_{i}\tau,\theta_{2}))=2\lambda_{i}\tau.\label{eq:contradiction to local minimizing}
\end{equation}
By (1) in Proposition \ref{lem:ttangent cone} and (\ref{eq: distance on flat cone}),
\begin{align}
\lim_{i\to\infty}\lambda_{i}^{-2}\mathbf{d}^{2}(\varphi(\lambda_{i}\tau,\theta_{1}),\varphi(\lambda_{i}\tau,\theta_{2})) & =2\tau^{2}\left(1-\cos\delta\theta\right).\label{eq:limiting of distance dunction}
\end{align}
We can choose $0<\epsilon<\sqrt{\tau^{2}(1+\cos\delta\theta)}$. By
(\ref{eq:limiting of distance dunction}), $\exists N>0$ such that
if $i>N$, 
\[
|\mathbf{d}^{2}(\varphi(\lambda_{i}\tau,\theta_{1}),\varphi(\lambda_{i}\tau,\theta_{2}))-2\lambda_{i}^{2}\tau^{2}(1-\cos\delta\theta)|<\epsilon^{2}\lambda_{i}^{2}.
\]
Then,
\[
\mathbf{d}^{2}(\varphi(\lambda_{i}\tau,\theta_{1}),\varphi(\lambda_{i}\tau,\theta_{2}))<\lambda_{i}^{2}\tau^{2}\left(3-\cos\delta\theta\right)<4\lambda_{i}^{2}\tau^{2},
\]
which contradicts to (\ref{eq:contradiction to local minimizing}).
We have proved the proposition.
\end{proof}

\bibliographystyle{alpha}
\bibliography{curveshorteningflow}

\newcommand{\etalchar}[1]{$^{#1}$}
\begin{thebibliography}{BBB{\etalchar{+}}01}

\bibitem[AB11]{andrews2011curvature}
Ben Andrews and Paul Bryan.
\newblock Curvature bound for curve shortening flow via distance comparison and
  a direct proof of grayson's theorem.
\newblock {\em Journal f{\"u}r die reine und angewandte Mathematik (Crelles
  Journal)}, 2011(653):179--187, 2011.

\bibitem[AL86]{abresch1986normalized}
Uwe Abresch and Joel Langer.
\newblock The normalized curve shortening flow and homothetic solutions.
\newblock {\em Journal of Differential Geometry}, 23(2):175--196, 1986.

\bibitem[Alt91]{altschuler1991singularities}
Steven~J Altschuler.
\newblock Singularities of the curve shrinking flow for space curves.
\newblock 1991.

\bibitem[Ang91]{angenent1991formation}
Sigurd Angenent.
\newblock On the formation of singularities in the curve shortening flow.
\newblock {\em Journal of Differential Geometry}, 33(3):601--633, 1991.

\bibitem[BBB{\etalchar{+}}01]{burago2001course}
Dmitri Burago, Iu~D Burago, Yuri Burago, Sergei Ivanov, Sergei~V Ivanov, and
  Sergei~A Ivanov.
\newblock {\em A course in metric geometry}, volume~33.
\newblock American Mathematical Soc., 2001.

\bibitem[Bre14]{brendle2014two}
Simon Brendle.
\newblock Two-point functions and their applications in geometry.
\newblock {\em Bulletin of the American Mathematical Society}, 51(4):581--596,
  2014.

\bibitem[Car92]{carmo1992riemannian}
Manfredo Perdigao~do Carmo.
\newblock {\em Riemannian geometry}.
\newblock Birkh{\"a}user, 1992.

\bibitem[CC97]{cheeger1997structure}
Jeff Cheeger and Tobias~H Colding.
\newblock On the structure of spaces with ricci curvature bounded below. i.
\newblock {\em Journal of Differential Geometry}, 46(3):406--480, 1997.

\bibitem[CZ01]{chou2001curve}
Kai-Seng Chou and Xi-Ping Zhu.
\newblock {\em The curve shortening problem}.
\newblock CRC Press, 2001.

\bibitem[Gag90]{gage1990curve}
Michael~E Gage.
\newblock Curve shortening on surfaces.
\newblock In {\em Annales scientifiques de l'Ecole normale sup{\'e}rieure},
  volume~23, pages 229--256, 1990.

\bibitem[GH86]{gage1986heat}
Michael Gage and Richard~S Hamilton.
\newblock The heat equation shrinking convex plane curves.
\newblock {\em Journal of Differential Geometry}, 23(1):69--96, 1986.

\bibitem[Gra87]{grayson1987heat}
Matthew~A Grayson.
\newblock The heat equation shrinks embedded plane curves to round points.
\newblock {\em Journal of Differential geometry}, 26(2):285--314, 1987.

\bibitem[Gra89]{grayson1989shortening}
Matthew~A Grayson.
\newblock Shortening embedded curves.
\newblock {\em Annals of Mathematics}, 129(1):71--111, 1989.

\bibitem[Ham93]{hamilton1993monotonicity}
Richard~S Hamilton.
\newblock Monotonicity formulas for parabolic flows on manifolds.
\newblock {\em Communications in Analysis and Geometry}, 1(1):127--137, 1993.

\bibitem[Ham95]{hamilton1995harnack}
Richard~S Hamilton.
\newblock Harnack estimate for the mean curvature flow.
\newblock {\em Journal of Differential Geometry}, 41(1):215--226, 1995.

\bibitem[Has16]{haslhofer2016lectures}
Robert Haslhofer.
\newblock Lectures on curve shortening flow.
\newblock {\em preprint}, 2016.

\bibitem[Hui90]{huisken1990asymptotic}
Gerhard Huisken.
\newblock Asymptotic-behavior for singularities of the mean-curvature flow.
\newblock {\em Journal of Differential Geometry}, 31(1):285--299, 1990.

\bibitem[Hui98]{huisken1998distance}
Gerhard Huisken.
\newblock A distance comparison principle for evolving curves.
\newblock {\em Asian Journal of Mathematics}, 2(1):127--133, 1998.

\bibitem[Oss78]{osserman1978isoperimetric}
Robert Osserman.
\newblock The isoperimetric inequality.
\newblock {\em Bulletin of the American Mathematical Society},
  84(6):1182--1238, 1978.

\bibitem[Tro90]{troyanov1990coordonnees}
Marc Troyanov.
\newblock Coordonn{\'e}es polaires sur les surfaces riemanniennes singulieres.
\newblock In {\em Annales de l'institut Fourier}, volume~40, pages 913--937,
  1990.

\bibitem[Yin10]{yin2010ricci}
Hao Yin.
\newblock Ricci flow on surfaces with conical singularities.
\newblock {\em Journal of Geometric Analysis}, 20(4):970--995, 2010.

\end{thebibliography}

\end{document}